%% file: draft.tex
\documentclass{amsart}

\usepackage[foot]{amsaddr}

\usepackage{subfig}
\usepackage{graphicx}
\usepackage{amssymb,amsfonts,amsrefs}
\usepackage{hyperref}
\usepackage{tikz-cd}
\usepackage{enumitem}
\usepackage{array}
\usepackage{changes}
\usepackage{ulem}

\usepackage[all,cmtip]{xy}

\usepackage{bbm}
\usepackage{tikz-cd}

\usepackage{tikz}
\usetikzlibrary{matrix,arrows}

\usepackage{mathtools}
\usetikzlibrary{calc}

\makeatletter
\newtheorem*{rep@theorem}{\rep@title}
\newcommand{\newreptheorem}[2]{%
\newenvironment{rep#1}[1]{%
 \def\rep@title{#2 \ref{##1}}%
 \begin{rep@theorem}}%
 {\end{rep@theorem}}}
\makeatother

\newtheorem{thm}{Theorem}[section]
\newreptheorem{thm}{Theorem}
\newreptheorem{cor}{Corollary}
\newreptheorem{conj}{Conjecture}
\newtheorem{cor}[thm]{Corollary}
\newtheorem*{cor*}{Corollary}
\newtheorem{prop}[thm]{Proposition}
\newtheorem*{prop*}{Proposition}
\newtheorem*{thm*}{Theorem}
\newtheorem{lem}[thm]{Lemma}

\newtheorem{mainthm}{Theorem}

\theoremstyle{definition}
\newtheorem{defn}[thm]{Definition}
\newtheorem*{defn*}{Definition}

\newtheorem{ex}[thm]{Example}

\theoremstyle{remark}
\newtheorem{rem}[thm]{Remark}
\newtheorem*{rem*}{Remark}
\newtheorem{rems}[thm]{Remarks}

\DeclareMathOperator{\Hom}{Hom}

\DeclareMathOperator{\Br}{Br}
\DeclareMathOperator{\im}{im}
\DeclareMathOperator{\Gal}{Gal}
\DeclareMathOperator{\res}{res}
\DeclareMathOperator{\Aut}{Aut}

\DeclareMathOperator{\cores}{cor}

\newcommand{\HNN}{{\rm HNN}}

\newcommand{\KK}{\mathbb{K}}
\newcommand{\LL}{\mathbb{L}}
\newcommand{\RR}{\mathbb{R}}

\newcommand{\ZZ}{\mathbb{Z}}
\newcommand{\NN}{\mathbb{N}}
\newcommand{\QQ}{\mathbb{Q}}
\newcommand{\FF}{\mathbb{F}}

\newcommand{\calR}{\mathcal{R}}

\newcommand{\calG}{\mathcal{G}}

\newcommand{\rmH}{\mathrm H}%for cohomology groups
\newcommand{\bfH}{\mathbf{H}^\bullet}%for the whole cohomology algebra
\newcommand{\set}[2]{\left\lbrace{#1}\ \big\vert\ {#2}\right\rbrace}
\newcommand{\pres}[2]{\left\langle{#1}\, \big\vert\, {#2}\right\rangle}

\newcommand{\K}{\KK}
\newcommand{\Z}{\ZZ}
\newcommand{\F}{\FF}
\newcommand{\pth}{{p^{\rm th}}}

\definecolor{julian}{RGB}{204,102,0}

\usepackage[bb=ncmbbr]{mathalpha}

\title[Cohomological Kaplansky radical for profinite groups]{A cohomological translation of the Kaplansky radical for profinite groups}

\author{S. Blumer}
\email{\href{mailto:simone.blumer@unimib.it}{simone.blumer@unimib.it}}
\author{J. Feuerpfeil}
\email{\href{mailto:j.feuerpfeil@campus.unimib.it}{j.feuerpfeil@campus.unimib.it}}
\author{L. C. Lopes}
\email{\href{mailto:lucasclopes@ufmg.br}{lucasclopes@ufmg.br}}
\author{C. Quadrelli}
\email{\href{mailto:claudio.quadrelli@uninsubria.it}{claudio.quadrelli@uninsubria.it}}
\address[SB,JF]{Dipartimento di Matematica e Applicazioni, Università di Milano-Bicocca, Milan, Italy}

\address[JF]{Femto-St, Université Marie et Louis Pasteur, Besan\c{c}on, France}

\address[LL]{Universidade Federal de Minas Gerais, Departamento de Matemática, Belo Horizonte, Brazil}

\address[CQ]{Department of Science \& High-Tech, University of Insubria, Como, Italy}

\begin{document}
% ABSTRACT
\begin{abstract} 
The Kaplansky radical of a field consists of the nonzero elements represented by every norm quadratic form in two variables. D.~Kijima and M.~Nishi conjectured that, for quadratic extensions, the Kaplansky radicals are related by the norm map in a manner analogous to Hilbert's Theorem~90. Although this H-conjecture was disproved by K.J.~Becher and D.B.~Leep, it is known to hold for several important classes of fields.

We introduce a cohomological analogue of the Kaplansky radical for arbitrary profinite groups and primes $p$, defined as the orthogonal of ${\rm H}^1(G,\mathbb{F}_p)$ with respect to the cup product with itself. For absolute Galois groups, this recovers the classical Kaplansky radical when $p=2$ and the $p-$radical of Dario--Engler for arbitrary p. We also formulate a group-theoretic analogue of the H-conjecture, proving that, for fields, it is equivalent to the original conjectural property and depends only on the maximal pro-$2$ quotient of the absolute Galois group.

We establish this property for broad classes of fields, including local and global fields, rational function fields, and all fields whose maximal pro-$p$ Galois group is of elementary type. Beyond its arithmetic origins, we investigate the property for general pro-$p$ groups, proving its stability under several natural group-theoretic constructions and obtaining new examples, including generalized right-angled Artin pro-$p$ groups and fundamental pro-$p$ groups of suitable graphs of groups, many of which cannot occur as maximal pro-$p$ Galois groups.

\noindent \textbf{Keywords.} Kaplansky radical, Galois cohomology, absolute Galois groups, Elementary Type conjecture, free pro-$p$ constructions.

\noindent \textbf{MSC2020} Primary: 20E18
 Secondary: 20J06, 20E06, 12G05, 11E04, 12J10, 20E08

\end{abstract}

\maketitle

%\tableofcontents

\input{contentsV2}

\section*{Acknowledgments}
    All the authors would like to thank Thomas Weigel for many helpful and inspiring discussions and exchanges; and Ronie Peterson Dario and Karim Johannes Becher for for their valuable feedback and comments about this manuscript.
    JF and LL are grateful to Pavel Zalesski{\u{\i}} for his guiding examples and insights on the fourth section of the article. JF also thanks Christian Maire and Oussama Hamza for interesting exchanges on this subject.

    SB, JF and CQ are members of Gruppo Nazionale per le Strutture Algebriche, Geometriche e le loro Applicazioni (GNSAGA), which is part of the Istituto Nazionale di Alta Matematica (INdAM).
    JF is part of the \lq\lq INdAM - GNSAGA Project\rq\rq\ CUP E53C25002010001. LL was financed in part by the Conselho Nacional de Desenvolvimento Científico e Tecnológico – CNPq, Processo/Grant nº 152746/2025-0.
\bibliography{literature}
\newpage

\end{document}

%% file: contentsV2.tex
\section*{Introduction}
\label{sec:intro}
The \emph{Kaplansky radical} of a field $\KK$ of characteristic not $2$ is an important invariant in the theory of quadratic forms. It was first introduced by I.~Kaplansky in~\cite{Kaplansky} and got its current name in C.M.~Cordes' article~\cite{Cordes1975}. It is defined as
\begin{align*}
    \calR_{\rm K}(\KK)\coloneq \set{a\in \KK^\times}{D_\KK\langle 1,-a\rangle=\KK^\times}
\end{align*}
where $D_\KK\langle 1,-a\rangle$ is the set of non-zero elements of $\KK$ represented by the binary quadratic form $X^2-aY^2$ (see, e.g., \cite[Chap. XII, \S 6]{Lam2005}). 

In the '80s, in a series of papers, D.~Kijima and M.~Nishi investigated the norm maps between Kaplansky radicals in quadratic field extensions in analogy to the classical Hilbert~90 theorem (see \cite{KijimaNishi1981,KijimaNishi1983,KijimaNishi1985}). They conjectured that for each $a\in \KK^\times$ one has
\begin{align}
\label{eq:KN radical equality}
    N_{\KK(\sqrt{a})/\KK}^{-1}({\calR}_{\mathrm{K}}(\KK))={\calR}_{\mathrm{K}}(\KK(\sqrt{a}))\cdot\KK^\times .
\end{align}
This conjecture became known as \lq\lq H-conjecture\rq\rq\ and was disproven about 30 years later by K.J.~Becher and D.B.~Leep in~ \cite{BecherLeep2014} by providing explicit constructions of fields where this property fails. In case the equality in \eqref{eq:KN radical equality} holds, they say that $\KK(\sqrt{a})/\KK$ is \emph{radical exact}.

{ In \cite{DarioEngler}, R.P.~Dario and A.J.~Engler proposed a generalization of the Kaplansky radical for arbitrary primes $p$.
In this paper we translate the Kaplansky radical and Dario--Engler's {\it $p$-radical} in cohomological terms, so that it may be applied to arbitrary profinite groups and primes $p$}; and we investigate the analogous of the H-conjecture for this cohomological variation of the radical.

Given a profinite group $G$ and a prime number $p$, consider the field $\FF_p$ with $p$ elements as a trivial $G$-module. We define the $\FF_p$\emph{-cup radical} $\calR_p(G)$ of $G$ to be the orthogonal of the cohomology group $\rmH^1(G,\FF_p)$ with respect to the cup product pairing. That is,
\[
 \calR_p(G)=\rmH^1(G,\FF_p)^\perp=\set{\alpha\in\rmH^1(G,\FF_p)}{\alpha\smallsmile\beta=0\:\text{for all }\beta\in\rmH^1(G,\FF_p)}.
\]
It is not difficult to see that if $U$ is an open subgroup of $G$, then corestriction on cohomology induces a map $c_{U,G}:\calR_p(U)\to \calR_p(G)$ (see Lemma~\ref{lem:maps between radicals} \ref{it:lem maps between radicals cores}).

We say that $G$ \emph{has the} $p$-\emph{Kijima--Nishi property} if $c_{U,G}$ is surjective for every normal open subgroup $U$ of index $p$.
The $\FF_p$-cup radical and the $p$-Kijima--Nishi property depend only on the maximal pro-$p$ quotient $G(p)$ of $G$, as shown in Proposition~\ref{prop:passing to the pro-p quotient}. This is the reason we mainly focus on pro-$p$ groups throughout the paper.

For a field $\KK$ we fix a separable closure $\KK^{\rm sep}$ and denote by $G_\KK=\Gal(\KK^{\rm sep}\!/\KK)$ its absolute Galois group. 
Also, for an arbitrary prime $p$ and $\KK$ a field containing a primitive $p^{\mathrm{th}}$-root of unity, let $\calR_p(\KK)$ denote the $p$-radical of $\KK$, as defined in \cite{DarioEngler}.
Then the following proposition yields a dictionary between the arithmetic situation and our group theoretic definitions:

\begin{prop*}[Lem.~\ref{lemma:RelationBetweenRadicals} and Prop.~\ref{prop:equivalence}]
    Let {$p$ be a prime number, and let} $\KK$ be a field containing a primitive $p^{\mathrm{th}}$-root of unity.
    Then we have { $\calR_{p}(\KK)/\KK^{\times p}\cong \calR_p(G_\KK)$. In particular, for $p=2$ we have} $\calR_{\rm K}(\KK)/\KK^{\times 2}\cong \calR_2(G_\KK)$, and $\KK$ satisfies the H-conjecture if and only if $G_\KK$ has the $2$-Kijima--Nishi property.
\end{prop*}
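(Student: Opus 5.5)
We will use Kummer theory and the Merkurjev--Suslin theorem, in the form $\rmH^2(G_\KK,\FF_p)\supseteq {}_p\Br(\KK)$ together with the identification of cup products of characters with cyclic algebras. Since $\KK$ contains a primitive $p$th root of unity $\zeta$, we fix the isomorphism $\mu_p\cong\FF_p$ of trivial $G_\KK$-modules. Kummer theory then gives an isomorphism $\kappa\colon\KK^\times/\KK^{\times p}\to\rmH^1(G_\KK,\FF_p)$. Under $\kappa$, the cup product $\kappa(a)\smallsmile\kappa(b)\in\rmH^2(G_\KK,\mu_p^{\otimes 2})\cong\rmH^2(G_\KK,\FF_p)$ corresponds, via the injection $\rmH^2(G_\KK,\mu_p)\hookrightarrow\Br(\KK)$ (Hilbert 90), to the class of the symbol algebra $(a,b)_{\zeta}$. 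It vanishes if and only if $b$ is a norm from $\KK(\sqrt[p]{a})$, i.e.\ $b\in N_{\KK(\sqrt[p]a)/\KK}(\KK(\sqrt[p]a)^\times)$; when $a\in\KK^{\times p}$ this holds trivially. Dario--Engler's $p$-radical is, as far as I recall, exactly the set of $a$ such that every $b\in\KK^\times$ is such a norm, i.e.\ $(a,b)_\zeta=0$ for all $b$. Hence $\kappa$ maps $\calR_p(\KK)/\KK^{\times p}$ isomorphically onto $\{\alpha:\alpha\smallsmile\beta=0\ \forall\beta\}=\calR_p(G_\KK)$. For $p=2$, $D_\KK\langle1,-a\rangle$ is precisely the norm group of $\KK(\sqrt a)$ (together with $\KK^\times$ when $a$ is a square), so $\calR_{\rm K}(\KK)=\calR_2(\KK)$ and the second isomorphism follows.

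For the H-conjecture, note that the open normal subgroups $U$ of index $2$ in $G_\KK$ are exactly $G_{\LL}$ with $\LL=\KK(\sqrt a)$, $a\notin\KK^{\times2}$. When $a$ is a square, equality \eqref{eq:KN radical equality} is trivial, so only these $\LL$ matter. We then need compatibility of Kummer maps with norm and corestriction: the square
\[
\begin{array}{ccc}
\LL^\times/\LL^{\times2} & \xrightarrow{\kappa_\LL} & \rmH^1(G_\LL,\FF_2)\\
\downarrow N & & \downarrow \cores\\
\KK^\times/\KK^{\times2} & \xrightarrow{\kappa_\KK} & \rmH^1(G_\KK,\FF_2)
\end{array}
\]
commutes. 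This is standard: corestriction on $\rmH^0(-,\LL^{\rm sep\times})$ is the norm, and it commutes with connecting maps of the Kummer sequence. By the first part and Lemma~\ref{lem:maps between radicals}, the map $c_{U,G}$ is identified with $N\colon\calR_{\rm K}(\LL)\LL^{\times2}/\LL^{\times2}\to\calR_{\rm K}(\KK)/\KK^{\times2}$.

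It remains to show that surjectivity of this map is equivalent to \eqref{eq:KN radical equality}. The inclusion ``$\supseteq$'' in \eqref{eq:KN radical equality} always holds. Indeed, $N(\calR_{\rm K}(\LL))\subseteq\calR_{\rm K}(\KK)$ is the well-definedness of $c_{U,G}$, and $N(\KK^\times)=\KK^{\times2}\subseteq\calR_{\rm K}(\KK)$. Now assume $c$ is surjective and let $x\in\LL^\times$ with $N(x)\in\calR_{\rm K}(\KK)$. Choose $r\in\calR_{\rm K}(\LL)$ with $N(r)\equiv N(x)$ modulo $\KK^{\times2}$. Adjusting by an element of $\KK^\times$ (whose norm is its square), we may assume $N(xr^{-1})\in\KK^{\times 2}\cdot N(\KK^\times)$ and even that $N(xr^{-1})=1$. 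Hilbert 90 then gives $xr^{-1}=\sigma(y)/y=\sigma(y)y/y^2\in \KK^\times\LL^{\times2}$. Since $\LL^{\times2}\subseteq\calR_{\rm K}(\LL)$, it follows that $x\in\calR_{\rm K}(\LL)\KK^\times$. Conversely, if \eqref{eq:KN radical equality} holds, we must hit every class in $\calR_{\rm K}(\KK)/\KK^{\times2}$ by norms of elements of $\calR_{\rm K}(\LL)$. Here lies the main obstacle: an element $b\in\calR_{\rm K}(\KK)$ is a norm from $\LL$ because $(a,b)=0$, so $b=N(x)$ with $x\in N^{-1}(\calR_{\rm K}(\KK))=\calR_{\rm K}(\LL)\KK^\times$. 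Writing $x=rc$ gives $b=N(r)c^2$, which yields surjectivity. I expect the only delicate bookkeeping to be the square-class adjustments in the Hilbert 90 step, where I would first try to normalize $N(r)=N(x)c^{-2}$ by replacing $r$ with $rc$.
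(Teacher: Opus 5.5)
Your proof is correct and follows the paper's route. The isomorphism $\calR_p(\KK)/\KK^{\times p}\cong\calR_p(G_\KK)$ comes from Kummer theory plus the norm/cup-product criterion of Lemma~\ref{lem:norm and cup}, and the H-conjecture equivalence from the Kummer--norm--corestriction square \eqref{diag:cores square}. Your explicit Hilbert~90 step is an improvement: the paper's converse argument only shows $\calR_{\rm K}(\KK)\subseteq N_{\LL/\KK}(\calR_{\rm K}(\LL)\KK^\times)$ and leaves the inclusion $N_{\LL/\KK}^{-1}(\calR_{\rm K}(\KK))\subseteq\calR_{\rm K}(\LL)\KK^\times$ implicit, whereas you supply it.

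There are three harmless slips. First, for $a\in\KK^{\times 2}$ the equality \eqref{eq:KN radical equality} is not trivial, since it would assert $\calR_{\rm K}(\KK)=\KK^\times$; the conjecture simply concerns quadratic extensions only. Second, your recalled description of the $p$-radical (the $a$ such that every $b$ is a norm from $\KK(\sqrt[p]{a})$) agrees with the paper's $\bigcap_a N_{\KK(\sqrt[p]{a})/\KK}(\KK(\sqrt[p]{a})^\times)$ only via the skew-symmetry $\kappa(a)\smallsmile\kappa(b)=-\kappa(b)\smallsmile\kappa(a)$. Third, Merkurjev--Suslin is not needed anywhere, since ${}_p\Br(\KK)\cong\rmH^2(G_\KK,\mu_p)$ already follows from Hilbert~90.
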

This shows that $\calR_2(G_\KK)$ may be seen as the ``group-cohomological {\sl alter ego}'' of the Kaplansky radical, and that the property whether $\KK$ satisfies the H-conjecture depends only on its absolute Galois group --- in the spirit of Anabelian Geometry. 

Our first objective is to study the $p$-Kijima--Nishi property for various fields. Following the arguments given by K.J.~Becher and D.B.~Leep in \cite{BecherLeep2014} to construct the counterexample to the H-conjecture, we prove the following.

\begin{mainthm}
    Let $p$ be an arbitrary prime. For every field $\KK$ of characteristic $\neq p$ there exists an extension $\KK'/\KK$ such that $G_{\KK'}$ does not have the $p$-Kijima--Nishi property.
\end{mainthm}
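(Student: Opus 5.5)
The plan is to mimic the Becher--Leep construction, translated into Galois cohomology through the dictionary $\calR_p(\KK)/\KK^{\times p}\cong\calR_p(G_\KK)$ and the norm/corestriction description of the $p$-Kijima--Nishi property (Lemma~\ref{lemma:RelationBetweenRadicals}, Prop.~\ref{prop:equivalence}). Everything depends only on the maximal pro-$p$ quotient (Proposition~\ref{prop:passing to the pro-p quotient}). So I first replace $\KK$ by $\KK(\zeta_p)$, which is harmless because we are allowed to pass to extensions. I then adjoin independent transcendentals, $\KK_0=\KK(\zeta_p)(a,b,t)$. This gives a field where Kummer theory identifies $\rmH^1$ with $\KK^\times/\KK^{\times p}$, and $\rmH^2(\,\cdot\,,\mu_p)$ with ${}_p\Br$ via symbol algebras $(x,y)_{\zeta}$ (Merkurjev--Suslin).

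The goal is a field $\KK'\supseteq\KK_0$ satisfying two requirements:
\begin{enumerate}[label=(\roman*)]
\item $(b)\in\calR_p(G_{\KK'})$, i.e.\ $(b)\smallsmile(c)=0$ for every $c\in\KK'^\times$;
\item for $\LL'=\KK'(\sqrt[p]{a})$ and $U=G_{\LL'}$, the class $(b)$ is not in the image of $c_{U,G_{\KK'}}\colon\calR_p(U)\to\calR_p(G_{\KK'})$.
\end{enumerate}

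Requirement (i) I will force by a transfinite (countable) tower. Set $\KK_{i+1}$ to be the compositum of the function fields of the Severi--Brauer varieties of $(b,c)_\zeta$, for all $c\in\KK_i^\times$, and let $\KK'=\bigcup_i\KK_i$. Every symbol $(b)\smallsmile(c)$ over $\KK'$ is already defined over some $\KK_i$ and dies in $\KK_{i+1}$, so (i) holds.

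To control what else dies, I use Amitsur's theorem. Passing to the function field of the Severi--Brauer variety of $A$ kills exactly the subgroup of ${}_p\Br$ generated by $[A]$. The same holds after base change to $\LL_i=\KK_i(\sqrt[p]a)$, where the killed classes are generated by $\mathrm{res}((b)\smallsmile(c))=(b)\smallsmile(c)_{\LL_i}$. Function fields of varieties are also regular extensions, so $a$ stays a non-$p$th power and $U$ remains open of index $p$ at every stage.

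Requirement (ii) is the main obstacle, and I would attack it by an obstruction invariant in the spirit of Becher--Leep. Suppose $(b)=\cores(\alpha)$ with $\alpha=(\beta)\in\calR_p(\LL')$. By the norm description this means $b\equiv N_{\LL'/\KK'}(\beta)\bmod \KK'^{\times p}$. Radicality of $\alpha$ over $\LL'$ forces in particular $(\beta)\smallsmile(t)=0$ in ${}_p\Br(\LL')$. This statement descends to some finite stage $\LL_i$, where it reads: $(\beta)\smallsmile(t)$ lies in the subgroup generated by finitely many classes $(b)\smallsmile(c_j)$.

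I then want to derive a contradiction using residue maps for the $t$-adic valuation on $\KK_0$, extended through the tower. Since $t$ is transcendental and independent of $a,b$, and the $c_j$ can be arranged, by a generic-point/specialization argument, to have controlled $t$-adic behaviour, taking $t$-residues should yield $\bar\beta\in\langle\bar b\rangle\cdot\overline{\LL_i}^{\times p}$ on the residue field. Combined with $N(\beta)\equiv b$, this gives $b^{p}\equiv b$-type congruences modulo norms that contradict the independence of $a,b$ on the residue level, for instance that $b$ is not a norm from $\KK_0(\sqrt[p]a)$ up to $p$th powers in the residue field.

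The delicate points are two. First, residues must behave well along function fields of Severi--Brauer varieties, which I plan to handle by extending the valuation through a Gauss-type extension to each function field. Second, the residue computation must be made uniform in the unknown $\beta$. If this direct residue argument proves unwieldy, I would fall back on choosing $\KK_0$ more generously (extra transcendentals $t_1,t_2,\dots$) and exactly replicating Becher--Leep's argument, with the quadratic-form steps replaced by the corresponding symbol/norm-variety steps for degree $p$.
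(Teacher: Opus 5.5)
\paragraph{Gap: requirement (ii) is not proved, and the proposed mechanism fails.}
Your tower does secure requirement (i). Requirement (ii), however, is the entire content of the theorem, and you leave it as a heuristic.

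\begin{itemize}
\item Because you kill $(b)\smallsmile(c)$ for an element $b$ unrelated to $a$, the Severi--Brauer varieties you adjoin are not split by $\KK(\sqrt[p]{a})$. You therefore have no structural description of $\LL'=\KK'(\sqrt[p]{a})$, and no handle on $\calR_p(G_{\LL'})$.
\item The $t$-adic residue step (``should yield'', ``$b^p\equiv b$-type congruences'') is not an argument.
\item The residue-level facts you want to contradict are destroyed by your own tower. Both $c=a$ and $c=t$ are among the elements used at the first stage, so $(b)\smallsmile(a)=0$ and $(b)\smallsmile(t)=0$ over $\KK_i$ for every $i\geq 1$. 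Pass to a henselization of any extension $w$ of the $t$-adic valuation to $\KK_i$. Then $\bar b$ must be a norm from $\kappa_w(\sqrt[p]{\bar a})$. It must also be a $p$th power in $\kappa_w$ whenever $w(t)\notin p\,w(\KK_i^\times)$: otherwise $\KK_i^h(\sqrt[p]{b})$ is inertial of degree $p$, and $t$ could not be a norm from it.
\item For such $w$, your computation $\bar\beta\in\langle\bar b\rangle\cdot(\text{$p$th powers})$ combined with $N(\beta)\equiv b$ yields only that $\bar b$ is a $p$th power. That is already true, so no contradiction arises.
\end{itemize}

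Rescuing this route would require a careful choice of valuations through the entire tower, uniform in the unknown $\beta$. The proposal supplies none of this. The fallback ``replicate Becher--Leep'' names a source but not the idea.

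\paragraph{The missing idea.}
The paper, following Becher--Leep, puts the Kummer generator of the degree-$p$ extension \emph{itself} into the radical.
\begin{enumerate}
\item Take $\LL=\KK(\sqrt[p]{\alpha})$. Adjoin the function fields of all finite products of Severi--Brauer varieties of dimension $p-1$ over $\KK$ that acquire a rational point over $\LL$. This kills $\kappa(\alpha)\smallsmile\beta$ for every $\beta\in\rmH^1(\KK,\FF_p)$.
\item Iterate countably often to obtain $\KK_\infty$.
\item Each step is a regular extension, so $\kappa(\alpha)$ stays nonzero and lies in $\calR_p(G_{\KK_\infty})$.
\item Each such variety becomes $\mathbb{P}^{p-1}$ over $\LL$, so $\LL\KK_\infty/\LL$ is purely transcendental. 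Hence $\calR_p(G_{\LL\KK_\infty})=0$ by Corollary~\ref{cor: function field in n variables has trivial radical}.
\item For $U=G_{\LL\KK_\infty}$, the map $c_{U,G_{\KK_\infty}}$ therefore has zero image in a nonzero target.
\end{enumerate}
Choosing varieties split by $\LL$ makes the upper radical vanish for free, via the rational function field theorem. No valuation analysis along the tower is needed.
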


The construction heavily relies on the construction of highly transcendental extensions to split sufficiently many central simple algebras. 
Thus, it still makes sense to ask whether the absolute Galois group of a field satisfies the $p$-Kijima--Nishi property, when one restricts to fields and absolute Galois groups that are, in some sense, \lq\lq small\rq\rq.

Indeed we show that several such fields have the $p$-Kijima--Nishi property. The main results are collected in the next theorem. They can be found in the article as Theorem~\ref{thm:Gloabal field trivial radical}, Proposition~\ref{prop:Radical of discretely valued field}, Corollary~\ref{cor: function field in n variables has trivial radical}, and Theorem~\ref{thm:ETC}.
\begin{mainthm}
\label{mainthm:List of fields}
    Let $\KK$ be a field containing a primitive $\pth$-root of unity. The absolute Galois group $G_{\KK}$ satisfies the $p$-Kijima--Nishi property in the following cases:
    \begin{enumerate}[label=(\roman*)]
        \item  $\KK$ is a global or local field;
        \item  $\KK$ is a complete discretely valued field whose residue field contains a primitive $\pth$-root of unity;
        \item $\KK$ is a purely transcendental extension of another field containing a primitive $\pth$-root of unity;
        \item \label{it:Main Theorem Fields ETC} the maximal pro-$p$ quotient $G_\KK(p)$ of $G_{\KK}$ is of elementary type (see Definition~\ref{defn:ET});
        \item \label{it:Main Theorem Fields Pythagorean}$\KK$ is formally real Pythagorean of finite type and $p=2$.
    \end{enumerate}
\end{mainthm}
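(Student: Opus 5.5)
By Proposition~\ref{prop:passing to the pro-p quotient}, the $\FF_p$-cup radical and the $p$-Kijima--Nishi property depend only on the maximal pro-$p$ quotient. I will therefore work throughout with $G=G_\KK(p)$. The strategy is to make the radical as small as possible, so that surjectivity of $c_{U,G}$ becomes either automatic or reduces to a local computation.

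In cases (i) and (iii), and for the global-field part of (i), I aim to show that $\calR_p(G_\KK)=0$. Triviality of the radical at the base means the target of $c_{U,G}$ is zero, so $c_{U,G}$ is surjective for every $U$.
\begin{itemize}
\item For a global field: if $\alpha=[a]\neq 0$, I use Chebotarev density or the Grunwald--Wang theorem to find a place $v$ where $a$ is not a local $p$-th power. I then pick $b$ with $(a,b)_v\neq 0$ and $(a,b)_w$ controlled elsewhere. The Hasse principle for $\Br$ gives $\alpha\smallsmile[b]\neq0$.
\item For a local field: $\rmH^2(G_\KK,\FF_p)\cong\FF_p$ and the cup product pairing is nondegenerate by local Tate duality, so the radical is trivial. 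The only exception is the case $G_\KK(p)$ free, which occurs only when $\mu_p\not\subset\KK$ and is excluded by hypothesis.
\item For a purely transcendental extension $\KK=\LL(t)$: Milnor's exact sequence shows that residues detect nonvanishing of symbols. For $a\notin \KK^{\times p}$, a suitable irreducible polynomial $\pi$ with $\partial_\pi(a,\pi)\neq0$ gives $[a]\smallsmile[\pi]\neq0$. Iterating yields Corollary~\ref{cor: function field in n variables has trivial radical}, including the infinite-transcendence case by a limit argument.
\end{itemize}

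Case (ii) is handled by the structure $G_\KK(p)\cong \ZZ_p\rtimes G_{\kappa}(p)$, coming from inertia and the residue field, under the hypothesis $\mu_p\subset\kappa$. I will compute $\rmH^\bullet$ via the Hochschild--Serre spectral sequence, which here behaves like the cohomology of the ``twisted'' extension. This should show that the radical of $G$ equals the radical of $G_\kappa$ inflated, possibly modulo the uniformizer class, and in fact is trivial whenever the uniformizer pairs nontrivially. Concretely, $[\pi]\smallsmile[u]$ is nonzero for $u$ a non-$p$-th-power unit. So the radical should be trivial unless $\kappa^\times=\kappa^{\times p}$, and that degenerate case can be checked by hand ($G$ being Demushkin-like or $\ZZ_p$).

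Case (iv) is the heart of the statement. I will prove stability of the $p$-Kijima--Nishi property under the two elementary-type operations, free pro-$p$ products and cyclotomic semidirect products $\ZZ_p\rtimes G$, and verify the base cases: $\ZZ_p$, free pro-$p$ groups (radical equal to everything, and corestriction onto $\rmH^1$ is checked via Kurosh and Shapiro), and Demushkin groups (trivial radical if $d\geq 3$ or $q\neq 2$, with the few small cases done directly).
\begin{itemize}
\item For free products, the cup product on $\rmH^\bullet(G_1\ast G_2)$ is the direct sum in positive degrees, so $\calR_p(G_1\ast G_2)=\calR_p(G_1)\oplus\calR_p(G_2)$. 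An index-$p$ normal $U$ is itself a free product of conjugates of $U\cap G_i$ (or copies of $G_i$) and a free group, by the pro-$p$ Kurosh theorem. Corestriction then splits along these factors, reducing surjectivity to the factors.
\item For $\ZZ_p\rtimes G$ I will use the computation of the radical in (ii) as a template.
\end{itemize}
The main obstacle I expect is the bookkeeping of corestriction for subgroups $U$ not containing a factor. There one needs Mackey-type formulas, and surjectivity from the radicals of the induced factors requires $G_i$ itself to satisfy the property for $U\cap G_i$.

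Case (v) follows from (iv). Formally real Pythagorean fields of finite type have $G_\KK(2)$ built from $\ZZ/2$ by free products and $\ZZ_2\rtimes$ operations, a result of Jacob and Minác. I therefore only need the extra base case $C_2$, whose radical is $0$ because $[x]^2\neq 0$.
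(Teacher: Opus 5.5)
Your arguments for (i), (ii) and (iv) follow essentially the same lines as the paper:
\begin{itemize}
\item for (i), local duality plus a local--global step;
\item for (ii), the tame decomposition $\ZZ_p\rtimes G_\kappa(p)$, which the paper outsources to Wadsworth;
\item for (iv), stability under free pro-$p$ products via Kurosh, and triviality of the radical of cyclotomic semidirect products.
\end{itemize}
Deducing (v) from (iv), via the elementary-type description of $G_\KK(2)$ for Pythagorean fields of finite type with $C_2$ as an extra base case, is a legitimate alternative to the paper's route through $\Delta$-RAAGs.

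The genuine gap is in (iii). The claim that $\calR_p(G_\KK)=0$ for every purely transcendental $\KK$ is false in transcendence degree one over a base $k$ with $G_k(p)$ finite.
\begin{itemize}
\item If $G_k(p)=1$ (e.g.\ $k=\mathbb{C}$), then $G_{k(t)}$ has $p$-cohomological dimension $\leq 1$ (Tsen, or the Faddeev sequence). Hence $\calR_p(G_{k(t)})=\rmH^1(k(t),\FF_p)\neq 0$, and no $\pi$ with $\partial_\pi(a,\pi)\neq 0$ exists at all.
\item If $p=2$ and $G_k(2)\cong C_2$ (e.g.\ $k=\RR$), then $1+t^2$ is a non-square in the Kaplansky radical of $\RR(t)$. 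For every $b$, the algebra $(1+t^2,b)$ has trivial residues everywhere: at real points $1+c^2>0$; at $\infty$ one has $1+t^2=t^2(1+t^{-2})$; and the remaining residue fields are $\mathbb{C}$. It also specializes trivially at real points, so it is split.
\end{itemize}
Your iteration only rescues transcendence degree $\geq 2$, because $G_{k(t_1)}(p)$ is already infinite.

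In these cases the property is not automatic. You have to lift nonzero radical classes through corestriction, and the proposal has no mechanism for that. For $G_k(p)=1$ the fix is immediate: use the free case, Example~\ref{ex:KNFree}.

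For $G_k(2)\cong C_2$ a structural input is needed, and this is the real content of Theorem~\ref{thm: rational function field trivial radical}. The paper uses:
\begin{itemize}
\item the splitting of $1\to G_{\bar k(t)}\to G_{k(t)}\to G_k\to 1$ (van den Dries--Ribenboim);
\item that $G_{\bar k(t)}$ has cohomological dimension $\leq 1$, with $\rmH^1(\bar k(t),\FF_2)$ a permutation module on points of $\mathbb{P}^1(\bar k)$;
\item Proposition~\ref{prop:free semidirect finite}, which identifies $\calR_2(G_{k(t)})$ with the span of the free $C_2$-orbits (for $k=\RR$ the orbit $\{i,-i\}$ gives $1+t^2$);
\item the norm map $N_H$ together with Proposition~\ref{prop:KN property based on subgroup}, showing this radical is the corestriction image from the open subgroup $G_{\bar k(t)}$.
\end{itemize}

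A smaller point concerns the case $G_k(p)$ infinite. You still need to show that some place $v$ has $a\notin\KK_v^{\times p}$. The paper does this by showing that an everywhere locally trivial Kummer extension of $k(t)$ is unramified, hence constant, hence non-split at a degree-one place. Also, the choice $\beta=[\pi]$ can fail when $v_\pi(a)\not\equiv 0 \pmod p$; there you should pair instead with a unit whose residue is not a $p$-th power.
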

\begin{rem*}
    We always assume that $\KK$ contains a primitive $\pth$-root of unity to identify $\rmH^1(G_\KK,\mu_p)$ and $\rmH^1(G_\KK,\FF_p)$. If $\KK$ does not contain a primitive $\pth$-root of unity, the situation becomes more subtle and there are multiple possible definitions of the radical. We discuss one such possibility in Section~\ref{sec:No roots of unity} and give some results hinting that this could be the right definition.
\end{rem*}
We discuss case \ref{it:Main Theorem Fields ETC} in more detail, which is the content of Theorem~\ref{thm:ETC}. Groups of elementary type are pro-$p$ groups constructed from $\ZZ_p$ and Demu\v{s}kin groups, using certain semidirect products with free abelian pro-$p$ groups, and free pro-$p$ products. The semidirect product corresponds on the level of fields to passing from $\KK$ to $\KK(\!(t)\!)$. For a precise definition of this class of pro-$p$ groups, introduced by I.~Efrat in \cite{ETCsmall}, we refer to Definition~\ref{defn:ET}.

The \emph{Elementary Type Conjecture} on maximal pro-$p$ Galois groups --- due to I.~Efrat, see \cite{ETC1,ETC2} --- asserts that if $\KK$ contains a primitive $\pth$-root of unity and $\KK^\times/\KK^{\times p}$ is finite, then $G_\KK(p)$ is of elementary type. It is known to hold for several fields, including formally real Pythagorean fields for $p=2$, PAC fields and fields of transcendence degree $1$ over a PAC field (see \cite{FriedJarden2023} for a definition of PAC field).

The proof of case \ref{it:Main Theorem Fields ETC} of Theorem~\ref{mainthm:List of fields} is heavily inspired by the work of R.P.~Dario (see \cite{Dario}), who considered the situation for $p=2$. The main idea is to show that the $p$-Kijima--Nishi property is satisfied for the elementary building blocks of groups of elementary type and is stable under the \lq\lq gluing operations\rq\rq. 
We underline that Dario’s approach relies on the theory of quadratic
forms, and thus it is bound to the prime number 2.

This proof sparks the question of under which constructions between pro-$p$ groups the $p$-Kijima--Nishi property is preserved, and also how to compute the resulting radicals. Proposition~\ref{prop:KNFreeProduct} shows that this is the case for free pro-$p$ products. In Section~\ref{sec:Radicals of graph fundamental groups} we substantially extend this observation. The following theorem is a special case of Proposition~\ref{prop:amalg} and Proposition~\ref{prop:HNN}:
\begin{mainthm}\label{mainthm:amalg HNN}
Let $G_0$, $G_1$ and $G_2$ be pro-$p$ groups such that $\calR_p(G_i)=0$ for $i=0,1,2$. If $G$ is one of the following groups, then $\calR_p(G)=0$ and hence $G$ satisfies the $p$-Kijima--Nishi property:
\begin{enumerate}[label=(\roman*)]
     \item $G$ is a proper amalgamated free pro-$p$ product $G=G_1\amalg_H G_2$ over a common subgroup $H$ of $G_1$ and $G_2$ such that the restriction maps
     $$\res_{G_i,H}^1\colon \rmH^1(G_i,\F_p)\to\rmH^1(H,\F_p)$$ are surjective for both $i=1,2$;
     \item $G$ is a {\rm proper} pro-$p$ HNN-extension $G=\mathrm{HNN}(G_0,H,f)$ for a subgroup $H$ of $G_0$ and a monomorphism $f\colon H\to G_0$ such that the two maps
     $$f^\ast\circ\res_{G_0,f(H)}^1,\res_{G_0,H}^1\colon \rmH^1(G_0,\F_p)\longrightarrow\rmH^1(H,\F_p),$$
     are equal --- here $f^\ast\colon\rmH^1(f(H),\F_p)\to\rmH^1(H,\F_p)$ is induced by $f$ ---, and $\res_{G_0,H}^1$ is surjective;
 \end{enumerate}
\end{mainthm}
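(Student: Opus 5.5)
We compute $\rmH^1(G,\FF_p)$ and the cup products $\rmH^1\times\rmH^1\to\rmH^2$ of $G$ from the Mayer--Vietoris type exact sequences attached to the action of $G$ on its pro-$p$ tree. The vanishing $\calR_p(G)=0$ will follow from injectivity of the restrictions to $\rmH^2$ of the vertex groups, together with $\calR_p(G_i)=0$. The Kijima--Nishi conclusion is then immediate: if $\calR_p(G)=0$, the corestriction map $c_{U,G}\colon\calR_p(U)\to\calR_p(G)=0$ is trivially surjective for every open normal $U$ of index $p$.

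\emph{Amalgam.} For a proper amalgamated product $G=G_1\amalg_H G_2$, properness makes $G$ act on its pro-$p$ tree, which yields the long exact sequence
\[
0\to\rmH^1(G)\to\rmH^1(G_1)\oplus\rmH^1(G_2)\to\rmH^1(H)\to\rmH^2(G)\to\rmH^2(G_1)\oplus\rmH^2(G_2)\to\cdots
\]
with $\FF_p$ coefficients. The middle map $(\alpha_1,\alpha_2)\mapsto\res(\alpha_1)-\res(\alpha_2)$ is surjective because $\res^1_{G_1,H}$ is. Hence the connecting map $\rmH^1(H)\to\rmH^2(G)$ vanishes, and $\rmH^2(G)\to\rmH^2(G_1)\oplus\rmH^2(G_2)$ is injective.

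Now let $\alpha\in\calR_p(G)$ and write $\alpha\mapsto(\alpha_1,\alpha_2)$. Take any $\beta_1\in\rmH^1(G_1)$. Choose $\beta_2\in\rmH^1(G_2)$ with the same restriction to $H$ as $\beta_1$, which surjectivity of $\res^1_{G_2,H}$ allows. Then $(\beta_1,\beta_2)$ comes from some $\beta\in\rmH^1(G)$. Since restriction is a ring map, $\alpha_1\smallsmile\beta_1=\res_{G_1}(\alpha\smallsmile\beta)=0$. Therefore $\alpha_1\in\calR_p(G_1)=0$, and similarly $\alpha_2=0$. Injectivity of $\rmH^1(G)\to\rmH^1(G_1)\oplus\rmH^1(G_2)$ then gives $\alpha=0$. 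Note that this argument only needed the exact sequence in degree $1$ and surjectivity of restriction; the injectivity on $\rmH^2$ is not actually required. The hypothesis $\calR_p(H)=0$ is likewise not needed.

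\emph{HNN.} For a proper $G=\HNN(G_0,H,f)$ with stable letter $t$, the analogous sequence is
\[
0\to\rmH^0\to\cdots\to\rmH^1(G)\to\rmH^1(G_0)\xrightarrow{\ \res-f^*\res\ }\rmH^1(H)\to\rmH^2(G)\to\cdots.
\]
The hypothesis that the two restriction maps coincide makes the middle map zero. Consequently
\[
\rmH^1(G)\cong\rmH^1(G_0)\oplus\FF_p\,\tau,
\]
where $\tau$ is dual to $t$ and vanishes on $G_0$. Hence $\rmH^1(G)\to\rmH^1(G_0)$ is surjective with kernel $\FF_p\tau$.

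Let $\alpha\in\calR_p(G)$. Restricting $\alpha\smallsmile\beta=0$ to $G_0$ shows $\res_{G_0}\alpha\in\calR_p(G_0)=0$, so $\alpha=c\tau$. It remains to show that $\tau\smallsmile\gamma\neq0$ for some $\gamma$.

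\emph{Main obstacle.} The difficulty is the cup products involving $\tau$, which are invisible to restriction to $G_0$. First I would prove that $\tau\smallsmile\tilde\beta$, for a lift $\tilde\beta$ of $\beta\in\rmH^1(G_0)$, equals the image of $\res_{G_0,H}\beta$ under the connecting map $\delta\colon\rmH^1(H)\to\rmH^2(G)$, up to sign. I would try to do this by an explicit cocycle computation, or by comparing with the HNN extension of $H$ by itself, namely $H\times\ZZ_p$ or the pro-$p$ mapping torus, via naturality of the sequence.

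Then $\delta$ is injective: its kernel is the image of the middle map, which is zero. Since $\res_{G_0,H}$ is surjective, it suffices that $\rmH^1(H)\neq0$. This holds when $H\neq1$. If $H=1$, then $G=G_0\amalg\ZZ_p$ is a free product, and there $\calR_p(\ZZ_p)\neq0$ for $p$ odd. So properness together with a nontrivial-$H$ assumption should be what the paper uses, and I would check that case separately.

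Finally, $c\,\delta(\res\beta)=0$ for all $\beta$ forces $c=0$, so $\alpha=0$ and $\calR_p(G)=0$.
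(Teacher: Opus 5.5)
Your part (i) is the paper's argument (proof of Proposition~\ref{prop:amalg}). You lift $\beta_1$ to $\rmH^1(G)$ using surjectivity of the other restriction and Mayer--Vietoris exactness, deduce $\res_{G,G_i}(\calR_p(G))\subseteq\calR_p(G_i)=0$, and conclude by injectivity of $r^1$. As you note, injectivity of $r^2$ is needed there only for the reverse inclusion of the pull-back.

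For (ii) you share the paper's reduction $\calR_p(G)\subseteq\ker r^1=\FF_p\tau$. You differ in how you show $\tau\notin\calR_p(G)$. The paper invokes Proposition~\ref{prop:cup product and shape of relations}: the relations $tht^{-1}f(h)^{-1}$ lie in the Frattini subgroup and carry a commutator component $[t,h]$ modulo $F_3$, so $\tau\smallsmile\alpha\neq0$ whenever $\res_{G,H}\alpha\neq0$. This is short, but it tacitly needs a finite minimal presentation. Your route through the connecting map $\delta$ is purely cohomological and works for arbitrary pro-$p$ groups.

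However, you left its key identity as a plan. The suggested comparison with $H\times\ZZ_p$ is not directly available, since $t\mapsto t$ extends to a homomorphism $H\times\ZZ_p\to G$ only when $f$ is the inclusion. The identity follows in a few lines:
\begin{itemize}
\item The Mayer--Vietoris sequence is the long exact sequence of $0\to\FF_p\to\mathrm{Coind}_{G_0}^G\FF_p\to\mathrm{Coind}_H^G\FF_p\to0$ (dual to $0\to\FF_p[[E]]\to\FF_p[[V]]\to\FF_p\to0$), composed with Shapiro's isomorphism.
\item Connecting maps satisfy $\delta(x\smallsmile\gamma)=\delta(x)\smallsmile\gamma$ for $\gamma\in\rmH^\bullet(G,\FF_p)$.
\item If $x\in\rmH^0(G,\mathrm{Coind}_H^G\FF_p)$ corresponds to $1\in\rmH^0(H,\FF_p)$, then Shapiro sends $x\smallsmile\gamma$ to $\res_{G,H}\gamma$.
\end{itemize}
Hence $\delta(\res_{G_0,H}\beta)=\delta(\res_{G,H}\tilde\beta)=\delta^0(1)\smallsmile\tilde\beta$. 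Since $\delta^0(1)$ spans $\ker r^1$, it is a nonzero multiple of $\tau$. With this your argument is complete. It also shows that only $\res^1_{G_0,H}\neq0$ is needed, not surjectivity.

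Your caveat about $H=1$ is correct and exposes a genuine defect in statement (ii). When $H=1$, all hypotheses hold and $G=G_0\amalg\ZZ_p$. By Lemma~\ref{lem:freeprod}, $\calR_p(G)\cong\calR_p(G_0)\oplus\calR_p(\ZZ_p)=\FF_p\tau\neq0$. This holds for every $p$, not only odd $p$, because $\rmH^2(\ZZ_2,\FF_2)=0$. Such a $G$ still has the $p$-Kijima--Nishi property by Proposition~\ref{prop:KNFreeProduct}, but its radical is nonzero. The paper's proof of Proposition~\ref{prop:HNN} uses $H\neq1$ silently at exactly the point where you do, namely to produce some $\alpha$ with $\res_{G,H}\alpha\neq0$. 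Assuming $\res^1_{G_0,H}\neq0$ is the clean fix.
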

Also, we show that the assumptions on the restriction maps may not be dropped. As in the case of abstract groups, a pro-$p$ group assembled via the constructions considered in Theorem~\ref{mainthm:amalg HNN} falls under a more general construction using a profinite version of Bass--Serre theory (cf., e.g., \cite{RZ:newhorizons,Rib17}): they can be described as the fundamental pro-$p$ group of a certain graph of pro-$p$ groups.
In Section~\ref{ssec:Fundamental Groups} we generalize Theorem~\ref{mainthm:amalg HNN} to fundamental groups of certain finite graphs of pro-$p$ groups.

Finally, we consider generalized right-angled Artin pro-$p$ group and $\Delta$-right-angled Artin pro-$2$ groups. These pro-$p$ groups are a generalization of the pro-$p$ completion of {\sl right-angled Artin groups} associated to finite combinatorial graphs, introduced respectively in \cite{quadrelliQuadraticCoho} and in \cite{deltaRAAGs} (for the definition see \S-\ref{ssec:pRAAGs}--\ref{ssec:deltaRAAGs}).
We underline that very few of them occur as maximal pro-$p$ quotients of absolute Galois groups of fields containing a $\pth$-root of unity.

We determine the radicals of these groups explicitly, allowing us to conclude the following theorem.
\begin{mainthm}[Cor.~\ref{cor:RAAGs have KN} and Prop.~\ref{prop:D-RAAGs have KN}]\label{thm:RAAGs and action}
Let $p$ be a prime number.
Suppose that a pro-$p$ group $G$ is one of the following:
\begin{enumerate}[label=(\roman*)]
%    \item[(a)] a semidirect product $F(X)\rtimes G_0$, where $G_0$ is a $p$-elementary abelian group {\rm(}satisfying $|G_0|\geq p^2$ if $p\neq2${\rm )} acting on a set $X$, and $F(X)$ is the free pro-$p$ group generated by $X$ with induced $G_0$-action;
    \item $G$ is a generalized right-angled Artin pro-$p$ group;
    \item $p=2$ and $G$ is a $\Delta$-right-angled Artin pro-2 group.
\end{enumerate}
Then $G$ satisfies the $p$-Kijima--Nishi property.
\end{mainthm}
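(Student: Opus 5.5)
The plan is to compute the $\FF_p$-cup radical explicitly in both cases. If it vanishes, the $p$-Kijima--Nishi property is automatic: the target of $c_{U,G}$ is then zero, so every map into it is surjective. The key input for this computation is the structure of $\rmH^\bullet(G,\FF_p)$ in degrees $\le 2$.

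\textbf{Case (i).} Let $G=G_{\Gamma,q}$ be a generalized right-angled Artin pro-$p$ group, with generators $x_v$ for $v\in V(\Gamma)$. It has one relation for each edge $\{v,w\}$, of the form $[x_v,x_w]$ twisted by $p$-powers $x_v^{q_{vw}}$ (the twisting is what "generalized" refers to). We know from \cite{quadrelliQuadraticCoho} that $\rmH^\bullet(G,\FF_p)$ is quadratic. Its degree-one part has basis $\chi_v$ dual to the $x_v$. Its degree-two part is spanned by the products $\chi_v\smallsmile\chi_w$ for edges $\{v,w\}$, these products are linearly independent, and $\chi_v\smallsmile\chi_w=0$ for non-edges. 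One extra ingredient: for $p=2$ (or when the twisting occurs) the squares $\chi_v\smallsmile\chi_v$ may be nonzero, but they are linear combinations of the edge products $\chi_v\smallsmile\chi_w$.

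Now take $\alpha=\sum a_v\chi_v$. Cupping with $\chi_w$ and reading off the coefficients of the edge basis shows that $\alpha\in\calR_p(G)$ forces $a_v=0$ whenever $v$ has at least one neighbour. For an isolated vertex $v$, the generator $x_v$ splits off as a free factor $\ZZ_p$, and $\chi_v\smallsmile\chi_v=0$ there (for $p=2$, because $\rmH^2(\ZZ_2,\FF_2)=0$). Hence $\calR_p(G)$ is spanned by the $\chi_v$ with $v$ isolated.

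This means $G=G'\ast F$ with $F$ free pro-$p$ on the isolated vertices and $\calR_p(G')=0$. The radical of $G$ is therefore not zero in general. Instead I will invoke Proposition~\ref{prop:KNFreeProduct}, the stability of the $p$-Kijima--Nishi property under free pro-$p$ products. Free pro-$p$ groups have it by a direct check: every open subgroup $U$ is free, $\calR_p(U)=\rmH^1(U,\FF_p)$, and $\cores\colon\rmH^1(U,\FF_p)\to\rmH^1(G,\FF_p)$ is surjective because it is dual to the transfer $G^{ab}/p\to U^{ab}/p$, which is injective for free groups. Groups with vanishing radical have the property trivially. Combining these gives (i).

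\textbf{Case (ii).} Here $p=2$ and $G$ is a $\Delta$-RAAG from \cite{deltaRAAGs}, attached to a graph together with a distinguished subset $\Delta$ that modifies some relations by squares of generators. The plan is the same. I use the known description of $\rmH^\bullet(G,\FF_2)$ as a quadratic algebra, with $\chi_v\smallsmile\chi_v$ prescribed by $\Delta$ and $\chi_v\smallsmile\chi_w$ for edges forming part of a basis of $\rmH^2$. From this I read off $\calR_2(G)$: it should again be spanned by the duals of isolated vertices $v$ with $\chi_v^2=0$. I then conclude via free products as in case (i). Vertices with $\chi_v^2\ne0$ cannot lie in the radical, since $\chi_v\smallsmile\chi_v$ is a nonzero basis-type element.

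\textbf{Main obstacle.} The hard step is pinning down a precise basis of $\rmH^2$, and showing that the squares $\chi_v^2$ never conspire with the edge products to cancel the contribution of a non-isolated vertex. If the explicit basis is delicate, my first attempt will be to dualize: I will compute $\rmH^2$ as the dual of $R/R^p[R,F]$ via the trace of relations modulo the Zassenhaus filtration. For each non-isolated $v$ I will exhibit a relation whose trace pairs nontrivially with $\chi_v\smallsmile\chi_w$ for a neighbour $w$. That certifies $\alpha\smallsmile\chi_w\neq0$ whenever $a_v\neq0$.
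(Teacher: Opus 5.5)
Your case (i) is essentially the paper's argument: you compute $\calR_p(G_{\Gamma,f})$ as the span of the duals of isolated vertices (Lemma~\ref{lem:radical of RAAG}), split off the free factor on those vertices, and invoke Proposition~\ref{prop:KNFreeProduct}. Two small corrections apply there. First, the full ring $\bfH(G_{\Gamma,f},\FF_p)$ is not known to be quadratic for general labelings, but you do not need it. It suffices that $\bfH(G_\Gamma,\FF_p)\to\bfH(G_{\Gamma,f},\FF_p)$ is an isomorphism in degrees $\le 2$. Second, your ``main obstacle'' disappears in this case. The twisting exponents lie in $p\ZZ_p$ (resp.\ $4\ZZ_2$), so every relation is $\equiv[v,w]$ modulo $F_3$. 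Hence the cup product on $\rmH^1$ is that of the untwisted RAAG, whose cohomology is the exterior Stanley--Reisner ring. In particular $\chi_v\smallsmile\chi_v=0$ even for $p=2$.

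Case (ii), however, has a genuine gap: you are analysing the wrong group. A $\Delta$-RAAG is not a graph with a distinguished vertex subset whose relations acquire squares. It is the split extension $G_{\Gamma,\mathbf z}=G_\Gamma\rtimes_{\delta_{\mathbf z}}\Delta$, where $\Delta=\langle x_0\rangle$ has order two and acts by $v\mapsto\mathbf z_v v^{-1}\mathbf z_v^{-1}$. So there is an extra generator $x_0$ and an extra class $\chi_0$ dual to it, which your plan never considers. The structure you intend to exploit is also absent. Isolated vertices do not split off as free factors, because the relation $x_0ux_0^{-1}=\mathbf z_uu^{-1}\mathbf z_u^{-1}$ ties each $u$ to $x_0$. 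Squares are not independent of mixed products either. For $\Gamma$ a single vertex $v$ with $\mathbf z_v=1$ one gets $G\cong C_2\amalg C_2$, and there $\chi_v\smallsmile\chi_v=\chi_v\smallsmile\chi_0$ --- precisely the cancellation you feared.

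The missing idea is that, for $p=2$, both inversion and conjugation act trivially on $G_\Gamma/\Phi(G_\Gamma)$. Hence $\Delta$ acts trivially on $\rmH^1(G_\Gamma,\FF_2)$, and $G_{\Gamma,\mathbf z}$ is a $2$-almost direct product with $\rmH^1(\Delta,\FF_2)\neq0\neq\rmH^1(G_\Gamma,\FF_2)$ (as $\Gamma\neq\emptyset$). Proposition~\ref{prop:almost direct product} then gives $\calR_2(G_{\Gamma,\mathbf z})=0$, and Corollary~\ref{cor:trivial R} concludes. No basis of $\rmH^2$ and no free-product splitting is required.

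Your trace method can also be salvaged once the correct presentation is used. Write $\alpha=a_0\chi_0+\sum_v a_v\chi_v$. Modulo $F_3$, the relation attached to $v$ is $v^2[v,x_0]$ times commutators of vertex generators. So pairing $\alpha\smallsmile\chi_0$ with $x_0^2$ and with these relations returns $a_0$ and each $a_v$, which forces $\alpha=0$.
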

\subsection*{Structure of the paper}
\begin{itemize}
    \item In Section~\ref{sec:p Kijima Nishi and Kaplansky Radical} we first recall some elementary facts about Galois cohomology before showing that our definition generalizes the classical Kaplansky radical and the H-conjecture. Later, in Section~\ref{ssec:Examples for KN} we give several group theoretic examples illustrating the $p$-Kijima--Nishi property.
    \item Section~\ref{sec:Arithmetic and geometric fields} is devoted to a more arithmetic study of the $p$-Kijima--Nishi property. Readers more interested in group theoretic applications, might proceed to the subsequent section, as the methods used here differ substantially from the rest of the paper. Here we exhibit large classes of fields possessing the $p$-Kijima--Nishi property proving most of Theorem~\ref{mainthm:List of fields}. In Section~\ref{ssec:fields without KN} we construct fields not satisfying the $p$-Kijima--Nishi property. 
    \item In Section~\ref{sec:Elementary Type groups have KN}, we recall the definition of pro-$p$ groups of elementary type, and establish the $p$-Kijima--Nishi property for them, by showing  the stability of this property under the involved constructions of pro-$p$ groups.
    \item We continue the study of universal constructions of pro-$p$ groups in Section~\ref{sec:Radicals of graph fundamental groups} by first computing the $\FF_p$-cup radical of certain amalgamated products and HNN-extensions. We then apply this to certain fundamental groups of graphs of pro-$p$ groups.
    \item Section~\ref{sec:RAAGs and Delta-RAAGs} is devoted to the study of the $p$-Kijima--Nishi property for generalized pro-$p$ RAAGs and $\Delta$-RAAGs, which are defined purely in terms of combinatorial data. We show that all such groups satisfy the property, although not all of them arise as maximal pro-$p$ Galois groups of fields.
    \item Finally, in Section~\ref{sec:No roots of unity} we return to more arithmetic situations and discuss a potential generalization for fields which do not contain a primitive $\pth$-root of unity.
\end{itemize}

\section{The \texorpdfstring{$p$}{}-Kijima--Nishi property and the Kaplansky radical}
\label{sec:p Kijima Nishi and Kaplansky Radical}

%%%%%%%%%%%%%%%%%%%%%%%%%%%%%%%%%%%5
\subsection{The \texorpdfstring{$\FF_p$}{}-cup radical of a profinite group}

From now on, every subgroup of a profinite group is tacitly assumed to be closed with respect to the profinite topology. Therefore, sets of generators of profinite groups groups, and presentations are to be intended in the topological sense.

%Given a pro-$p$ group $G$, we write \[ [h, g] = h^{-1}\cdot h^g = h^{-1}\cdot g^{-1}\cdot h\cdot g, \qquad\text{for all } g, h \in G.\]

For basic facts on Galois cohomology of profinite and pro-$p$ groups we refer to \cite[Ch.~I, \S~1--4]{serre:galc} and to \cite[Ch.~I, \S~1--6 and Ch.~III, \S~9]{NSW2008}.
{ In particular, we recall that if $G$ is a pro-$p$ group, then the first $\F_p$-cohomology group $\rmH^1(G,\F_p)=\mathrm{Hom}_{\rm cts}(G,\Z/p)$ (cf. \cite[Ch.~I, \S~4.2]{serre:galc}) --- hence, if $\set{x_i}{,i\in I}$ is a minimal generating set of $G$, then $\rmH^1(G,\F_p)$ has a dual basis $\set{x_i^\ast}{ i\in I}$.}

Let $G$ be a profinite group. We recall that the cup product
\[
\rmH^s(G,\FF_p)\times \rmH^t(G,\FF_p)\overset{\smallsmile}{\longrightarrow} \rmH^{s+t}(G,\FF_p)
\]
is a skew-commutative bilinear map, i.e., $\beta\smallsmile\alpha=(-1)^{st}\alpha\smallsmile\beta$ for $\alpha\in\rmH^s(G,\FF_p)$ and $\beta\in\rmH^t(G,\FF_p)$ --- see, e.g., \cite[Ch.~I, \S~4]{NSW2008}.
{ Set $\bfH(G,\F_p)\coloneq \bigoplus_{n}\rmH^n(G,\F_p)$, considered as a graded, graded-commutative, $\F_p$-algebra.}

\begin{defn}
    \label{defn:Fp cup radical}
    For a profinite group $G$, we call
    \begin{align*}
        \calR_p(G)\coloneq \set{\alpha\in \rmH^1(G,\FF_p)}{\alpha \smallsmile\beta=0\text{ for all }\beta\in \rmH^1(G,\FF_p)}
        \end{align*}
    the \emph{$\FF_p$-cup radical} of the profinite group $G$.
    %If $G=G_{\KK}$ for a field $\KK$ containing a primitive $p^{\rm th}$-root of unity, then we also write $\calR_p(G_{\KK})$ instead of $\calR_p(G_\KK)$.
\end{defn}

\begin{rem}

\label{rem: radical reduction to pro-p case}
    If $G$ is a profinite group and $G(p)$ its maximal pro-$p$ quotient, then $\rmH^1(G(p),\FF_p)\cong \rmH^1(G,\FF_p)$ via the inflation morphism, and the 5-term sequence for a group extension yields that the inflation $\rmH^2(G(p),\FF_p)\to \rmH^2(G,\FF_p)$ is injective (cf., e.g., \cite[Prop.~1.6.7]{NSW2008}). 

\noindent    Since inflation commutes with the cup product, we have $\calR_p(G(p))\cong \calR_p(G)$. Thus, it suffices to consider the pro-$p$ case. 
\end{rem}

For pro-$p$ groups, one may recover the behavior of the cup product from a minimal presentation.
Recall that a presentation 
$$G=F/R=\pres{x_i,\:i\in I}{r_j=1,\:j\in J}$$
of a pro-$p$ group $G$, with $F$ the free pro-$p$ group generated by $\set{x_i}{ i\in I}$ and $R$ the normal subgroup of $F$ generated by $\set{r_j}{ j\in J}$, is minimal if the inflation gives an isomorphism $\rmH^1(F,\F_p)\simeq\rmH^1(G,\F_p)$, or, equivalently, if every $r_j$ lies in the Frattini subgroup $\Phi(F)=F^p[F,F]$ (cf. \cite{NSW2008}). Now let $F_3\subseteq F$ be the subgroup
\[F_3=\begin{cases} F^4[F,F]^2[[F,F],F] & \text{if }p=2,\\ 
F^p[[F,F],F] & \text{if } p>2
\end{cases}\]
--- i.e., $F_3$ is the third term of the Zassenhaus filtration of $F$. Then, when $G$ is finitely generated, the $\F_p$-vector space $\Phi(F)/F_3$ has a basis $\mathcal{B}$ consisting of the cosets $[x_i,x_{i'}]F_3$, $i<i'$, and also $x_i^2F_3$ if $p=2$. 
The following result is a consequence of \cite[Prop. 1.3.2]{Vogel2004}.
\begin{prop}\label{prop:cup product and shape of relations}
Let $G$ be a finitely presented pro-$p$ group, with $G=F/R$ a minimal presentation as above.
Then one has $x_i^\ast\smallsmile x_{i'}^\ast\neq 0$, with $i<i'$, if and only if $[x_i,x_{i'}]F_3$ occurs in $r_jF_3$ expressed as a linear combination of the elements of $\mathcal{B}$, for some $j$.
\end{prop}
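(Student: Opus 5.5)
We prove Proposition~\ref{prop:cup product and shape of relations} by identifying $\rmH^2(G,\FF_p)$ with the dual of the relation module, and then reading off cup products of degree-one classes as a pairing with relations.

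Since the presentation is minimal, the five-term exact sequence for $1\to R\to F\to G\to 1$ reads
\[
0\to\rmH^1(G,\FF_p)\xrightarrow{\inf}\rmH^1(F,\FF_p)\to\rmH^1(R,\FF_p)^F\xrightarrow{\mathrm{tg}}\rmH^2(G,\FF_p)\to\rmH^2(F,\FF_p)=0 .
\]
The first map is an isomorphism by minimality, so the transgression $\mathrm{tg}$ is an isomorphism. Moreover $\rmH^1(R,\FF_p)^F=\Hom(R/R^p[R,F],\FF_p)$, and the classes $r_jR^p[R,F]$ form a basis of the $\FF_p$-vector space $R/R^p[R,F]$. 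Hence $\rmH^2(G,\FF_p)$ has a dual basis $\{\rho_j\}$, where $\rho_j$ is the transgression of the homomorphism sending $r_k\mapsto\delta_{jk}$. Consequently a class $\xi\in\rmH^2(G,\FF_p)$ vanishes if and only if $\langle r_j,\xi\rangle=0$ for every $j$, where $\langle r_j,\xi\rangle$ denotes the value at $r_j$ of $\mathrm{tg}^{-1}(\xi)$.

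The key step is to compute $\langle r,\,x_i^\ast\smallsmile x_{i'}^\ast\rangle$ for $r\in R\subseteq\Phi(F)$. This is the content of Vogel's result \cite[Prop.~1.3.2]{Vogel2004}, which we would invoke rather than reprove. If one wanted to prove it directly, the route is via the Magnus/Fox-calculus description. Write the image of $r$ in $\FF_p\langle\langle X_i\rangle\rangle$ as $1+\sum_{i,i'}\epsilon_{i,i'}(r)X_iX_{i'}+\cdots$, with no linear terms because $r\in\Phi(F)$. Then one has
\[
\langle r,\,x_i^\ast\smallsmile x_{i'}^\ast\rangle=\pm\,\epsilon_{i,i'}(r).
\]
This is proved by representing $x_i^\ast\smallsmile x_{i'}^\ast$ by the unipotent $3\times3$ representation $F\to\mathbb U_3(\FF_p)$ built from $x_i^\ast$ and $x_{i'}^\ast$; the upper-right corner is precisely the coefficient $\epsilon_{i,i'}$. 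The function $\epsilon_{i,i'}$ vanishes on $F_3$, so it factors through $\Phi(F)/F_3$.

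It then remains to evaluate $\epsilon_{i,i'}$ on the basis $\mathcal{B}$, for $i<i'$:
\begin{itemize}
\item on the commutator basis element, $\epsilon_{i,i'}([x_k,x_l])=\delta_{(i,i'),(k,l)}$ for $k<l$, since $[x_k,x_l]\mapsto 1+X_kX_l-X_lX_k+\cdots$;
\item for $p=2$, on the squares, $\epsilon_{i,i'}(x_k^2)=0$, since $x_k^2\mapsto 1+X_k^2$.
\end{itemize}
Thus $\epsilon_{i,i'}(r_j)$ is exactly the coefficient of $[x_i,x_{i'}]F_3$ in $r_jF_3$ written in the basis $\mathcal{B}$.

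Combining this with the duality of the first step, $x_i^\ast\smallsmile x_{i'}^\ast\neq0$ if and only if that coefficient is nonzero for some $j$, which is the claim.

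The main obstacle is the sign and normalization bookkeeping in the identification of the cup product with the transgression pairing, that is, matching the cocycle conventions of Vogel's proposition with ours. Only vanishing versus nonvanishing matters here, so signs are harmless.
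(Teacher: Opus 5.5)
Your argument is correct and follows the paper's route: the paper simply derives the statement from Vogel's Prop.~1.3.2, and your transgression duality together with the evaluation of $\epsilon_{i,i'}$ on the basis $\mathcal{B}$ is exactly the step needed to pass from Vogel's formula to the stated form. One slip: under the paper's definition of minimal presentation (which only requires $r_j\in\Phi(F)$), the classes $r_jR^p[R,F]$ only span $R/R^p[R,F]$ and need not form a basis, but your criterion ``$\xi=0$ iff $\langle r_j,\xi\rangle=0$ for all $j$'' uses only spanning, so the conclusion is unaffected.
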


As for the behavior of the corestriction map on the $\F_p$-cup radical of a profinite group, one has the following --- for definition and basic properties of the corestriction map see, e.g., \cite[Ch.~I, \S~4]{NSW2008}.

\begin{lem}\label{lem:maps between radicals}
\begin{enumerate}[label=(\arabic*)]
    \item \label{it:lem maps between radicals cores} Let $G$ be a profinite group and $U\leq G$ an open subgroup, then the corestriction map $\cores_{U,G}^1:\rmH^1(U,\FF_p)\to \rmH^1(G,\FF_p)$ induces a map 
    $$c_{U,G}:\mathcal{\calR}_p(U)\longrightarrow \mathcal{\calR}_p(G).$$
    \item \label{it:lem maps between radicals res} If $\varphi:G\to H$ is a homomorphism such that $\varphi^*:\rmH^1(H,\FF_p)\to \rmH^1(G,\FF_p)$ is surjective, then we have $\varphi^*(\calR_p(H))\subseteq \calR_p(G)$.
\end{enumerate}

\end{lem}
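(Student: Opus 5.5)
For part \ref{it:lem maps between radicals cores}, I will use the projection formula for corestriction: for $\alpha\in\rmH^s(U,\FF_p)$ and $\beta\in\rmH^t(G,\FF_p)$,
\[
\cores_{U,G}(\alpha\smallsmile\res_{G,U}(\beta))=\cores_{U,G}(\alpha)\smallsmile\beta
\]
(cf. \cite[Ch.~I, \S~4]{NSW2008}). Take $\alpha\in\calR_p(U)$ and an arbitrary $\beta\in\rmH^1(G,\FF_p)$. Then $\res^1_{G,U}(\beta)\in\rmH^1(U,\FF_p)$, so $\alpha\smallsmile\res^1_{G,U}(\beta)=0$ in $\rmH^2(U,\FF_p)$ by definition of $\calR_p(U)$. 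The projection formula then gives
\[
\cores^1_{U,G}(\alpha)\smallsmile\beta=\cores^2_{U,G}(0)=0 .
\]
Since $\beta$ was arbitrary, $\cores^1_{U,G}(\alpha)\in\calR_p(G)$. Hence $\cores^1_{U,G}$ restricts to a map $c_{U,G}\colon\calR_p(U)\to\calR_p(G)$, which is $\FF_p$-linear because corestriction is.

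For part \ref{it:lem maps between radicals res}, I will use that the homomorphisms $\varphi^*$ induced on cohomology are compatible with cup products: $\varphi^*(\alpha\smallsmile\beta)=\varphi^*(\alpha)\smallsmile\varphi^*(\beta)$. Let $\alpha\in\calR_p(H)$ and let $\gamma\in\rmH^1(G,\FF_p)$. By the surjectivity hypothesis, $\gamma=\varphi^*(\beta)$ for some $\beta\in\rmH^1(H,\FF_p)$. Then
\[
\varphi^*(\alpha)\smallsmile\gamma=\varphi^*(\alpha\smallsmile\beta)=\varphi^*(0)=0,
\]
so $\varphi^*(\alpha)\in\calR_p(G)$.

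Neither step is a real obstacle. The only point needing care is stating the projection formula with the right signs and order of factors. Any sign that appears is immaterial here, since the product in question is $0$. The surjectivity in \ref{it:lem maps between radicals res} is used exactly to reach every test class $\gamma$ of $G$.
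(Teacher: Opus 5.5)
Your proposal is correct and follows the paper's proof essentially verbatim. Part (1) uses the projection formula $\cores^1_{U,G}(\alpha)\smallsmile\beta=\cores^2_{U,G}(\alpha\smallsmile\res^1_{G,U}(\beta))$ from \cite[Prop.~1.5.3]{NSW2008}, and part (2) uses the compatibility of $\varphi^*$ with cup products, with surjectivity of $\varphi^*$ ensuring every test class in $\rmH^1(G,\FF_p)$ is reached.
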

\begin{proof}
To show \ref{it:lem maps between radicals cores} let $\alpha$ be an element of $\calR_p(U)$. For $\beta\in \rmH^1(G,\FF_p)$ arbitrary, by \cite[Prop.~1.5.3]{NSW2008} one has the identity
\begin{align*}
    \cores_{U,G}^1(\alpha)\smallsmile\beta=\cores^{2}_{U,G}(\alpha\smallsmile \res_{G,U}^1(\beta))=\cores^2_{U,G}(0)=0.
\end{align*}
Therefore $\cores^1_{U,G}(\alpha)\in \calR_p(G)$.

For \ref{it:lem maps between radicals res} let $\alpha\in \calR_p(H)$ and $\beta\in \rmH^1(G,\FF_p)$. From the surjectivity of $\varphi^*$, it follows that $\beta=\varphi^*(\beta')$ for a suitable $\beta'\in \rmH^1(H,\FF_p)$. Then 
\begin{align*}
    \varphi^*(\alpha)\smallsmile \beta=\varphi^*(\alpha)\smallsmile \varphi^*(\beta')=\varphi^*(\alpha\smallsmile \beta')=0.
\end{align*}
This shows that $\varphi^*(\alpha)\in \calR_p(G)$.
\end{proof}
\begin{defn}
    \label{defin:pKN}
    A profinite group $G$ is said to have the \emph{$p$-Kijima--Nishi property} if for each open normal subgroup $U$ of $G$ of index $p$ the map $c_{U,G}:\calR_p(U)\to\calR_p(G)$ from Lemma~\ref{lem:maps between radicals} \ref{it:lem maps between radicals cores} is surjective.
\end{defn}
The following useful observation is immediate from the definition of the $p$-Kijima--Nishi property.
\begin{cor}\label{cor:trivial R}
If the $\FF_p$-cup radical $\calR_p(G)$ is trivial, then $G$ satisfies the $p$-Kijima--Nishi property.
\end{cor}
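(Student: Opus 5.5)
The claim follows directly from Definition~\ref{defin:pKN}, so the plan is simply to check surjectivity onto the zero space. Suppose $\calR_p(G)=0$, and fix an arbitrary open normal subgroup $U\trianglelefteq G$ of index $p$. By Lemma~\ref{lem:maps between radicals}~\ref{it:lem maps between radicals cores}, corestriction restricts to a well-defined homomorphism of $\FF_p$-vector spaces
\[
c_{U,G}\colon\calR_p(U)\longrightarrow\calR_p(G)=0 .
\]
Its image is a subspace of the zero space and contains $0=c_{U,G}(0)$, so it equals $\calR_p(G)$. Hence $c_{U,G}$ is surjective.

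Since $U$ was arbitrary, $c_{U,G}$ is surjective for every open normal subgroup of index $p$. This is exactly the $p$-Kijima--Nishi property of Definition~\ref{defin:pKN}.

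The argument has no real obstacle, since surjectivity onto the trivial group is automatic. The only point to confirm is that $c_{U,G}$ genuinely lands in $\calR_p(G)$ and is linear, and both are supplied by Lemma~\ref{lem:maps between radicals}~\ref{it:lem maps between radicals cores} together with the linearity of corestriction. If $G$ has no open normal subgroup of index $p$, the condition is vacuous and the conclusion still holds.
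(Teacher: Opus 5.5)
Your proposal is correct and matches the paper, which likewise records this as immediate from the definition of the $p$-Kijima--Nishi property: any linear map into the zero space $\calR_p(G)=0$ is surjective.
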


%%%%%%%%%%%%%%%%%%%%%%%%%%%%%%%%%%%%%%%%%%

\subsection{Norm, corestriction, and the Kummer isomorphism}
\label{ssec:Norm Corestriction and Kummer}
Let $\KK$ be a field and fix a separable closure $\KK^{\rm sep}$. We define $G_\KK\coloneq \Gal(\KK^{\rm sep}\!/\KK)$ to be its absolute Galois group. Now let $p$ be a prime different from $\mathrm{char}(\KK)$ and set $\mu_p$ to be the set of $\pth$-roots of unity of $\KK^{\rm sep}$. Then the Kummer map $\kappa:\KK^\times \to \rmH^1(G_\KK,\mu_p)$ is surjective with kernel $\KK^{\times p}$.

Now, if $\KK$ contains a primitive $\pth$-root of unity, then $\mu_p\cong \FF_p$ as $G_\KK$-modules and $\kappa$ induces an isomorphism $\KK^\times/\KK^{\times p}\simeq \rmH^1(G_{\KK},\FF_p)\simeq \Hom_{\rm cts}(G_\KK,\FF_p)$. If we view $\alpha\in \rmH^1(G,\FF_p)$ as a continuous homomorphism $G_\KK\to \FF_p$, then we have
\begin{align*}
    \ker(\alpha)=G_{\KK(\sqrt[p]{a})}\unlhd G_{\KK}
\end{align*}
where $a\in \KK^\times$ is any element with $\kappa(a)=\alpha$. Now, let $\LL/\KK$ be a finite separable extension. Then $G_{\LL}$ is an open subgroup of $G_\KK$ and the following square commutes:
\begin{equation}
\label{diag:cores square}
    \begin{tikzcd}
        \LL^\times \arrow[r,"\kappa"] \arrow[d,"{N_{\LL/\KK}}"]&\rmH^1(G_{\LL},\FF_p)\arrow[d,"{\cores^1_{G_\LL,G_\KK}}"]\\
        \KK^\times \arrow[r,"\kappa"] &\rmH^1(G_{\KK},\FF_p)
    \end{tikzcd}
\end{equation}

We recall the following well known result relating the cup product and the norm of radical extensions (cf., e.g., \cite[Cor.~4.7.7]{GilleSzamuely})

\begin{lem}\label{lem:norm and cup}
Let $\KK$ be a field containing a primitive $\pth$-root of unity, set $G=G_{\KK}$ and let $a,b$ be elements of $\KK^\times$.
The following are equivalent.
\begin{enumerate}[label=(\roman*)]
    \item One has $b=N_{\KK(\sqrt[p]{a})/\KK}(u)$ for some $u\in\KK(\sqrt[p]{a})^\times$;
    \item The cup product $\kappa(a)\smallsmile\kappa(b)\in\rmH^2(G,\FF_p)$ is trivial.
\end{enumerate}
\end{lem}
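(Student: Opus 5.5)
The plan is to reduce the statement to the classical theory of cyclic algebras and the norm residue symbol. Fix a primitive $\pth$-root of unity $\zeta\in\KK$; it identifies $\mu_p$ with $\FF_p$. Under this identification $\rmH^2(G,\FF_p)\cong\rmH^2(G,\mu_p)\cong {}_p\Br(\KK)$, the $p$-torsion of the Brauer group; the second isomorphism comes from the Kummer sequence together with Hilbert's Theorem~90. The central step is the standard identity that, up to a sign depending on conventions, $\kappa(a)\smallsmile\kappa(b)$ is the Brauer class of the symbol algebra $(a,b)_\zeta$. This algebra is generated by $x,y$ subject to $x^p=a$, $y^p=b$, $yx=\zeta xy$. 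I would cite this from \cite{GilleSzamuely}, Prop.~4.7.1 or thereabouts, rather than reprove it. The equivalence then amounts to showing that $(a,b)_\zeta$ is split exactly when $b$ is a norm from $\KK(\sqrt[p]{a})$.

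First I would dispose of the degenerate case $a\in\KK^{\times p}$. Here $\kappa(a)=0$, so the cup product vanishes. Also $\KK(\sqrt[p]{a})=\KK$, so the norm map is the identity and every $b$ is a norm. Both conditions therefore hold.

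Now assume $a\notin\KK^{\times p}$, so that $\LL=\KK(\sqrt[p]{a})$ is cyclic of degree $p$ with generator $\sigma\colon\sqrt[p]{a}\mapsto\zeta\sqrt[p]{a}$. Then $(a,b)_\zeta$ is the cyclic algebra $(\LL/\KK,\sigma,b)$. The classical computation for cyclic algebras gives $\Br(\LL/\KK)\cong\hat{\rmH}^0(\Gal(\LL/\KK),\LL^\times)=\KK^\times/N_{\LL/\KK}(\LL^\times)$, and under it the class of $(\LL/\KK,\sigma,b)$ corresponds to $b$ modulo norms. Alternatively, one can exhibit directly that the algebra is split if and only if it contains a zero divisor of the form $u - y$ with $u\in\LL$. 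The algebra is therefore split if and only if $b\in N_{\LL/\KK}(\LL^\times)$, which yields (i)$\Leftrightarrow$(ii).

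The main obstacle is the compatibility step: checking that the cup product of Kummer classes coincides with the connecting-map or cyclic-algebra description of the Brauer class. Sign and normalisation conventions do not matter here, since only vanishing is at stake. As the lemma is presented as well known, I would rely on \cite[Cor.~4.7.7]{GilleSzamuely} for this identification rather than redo the cocycle computation.
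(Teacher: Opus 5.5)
Your proposal is correct and is essentially the paper's argument. The paper gives no proof of its own: it treats the lemma as well known and cites \cite[Cor.~4.7.7]{GilleSzamuely}. That result packages exactly your reduction, namely identifying $\kappa(a)\smallsmile\kappa(b)$ with the class of $(a,b)_\zeta$ in ${}_p\Br(\KK)$ (as in Remark~\ref{rem:Brauer group and cup product}, via Prop.~4.7.1) and using that the cyclic algebra $(\LL/\KK,\sigma,b)$ splits iff $b\in N_{\LL/\KK}(\LL^\times)$, with the case $a\in\KK^{\times p}$ trivial as you note. The only inconsistency is bibliographic: in your last paragraph you invoke Cor.~4.7.7 for the cup-product/symbol-algebra identification. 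The paper uses Cor.~4.7.7 for the full equivalence and Prop.~4.7.1 for that identification.
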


\begin{rem}
    \label{rem:Brauer group and cup product}
{ It is well known that fixing an isomorphism of $G_{\KK}$-modules $\mu_p\cong\F_p$ --- and thus a generator $\zeta$ of $\mu_p$ --- yields an isomorphism $\rmH^2(G_{\KK},\F_p)\cong{}_p\mathrm{Br}(\KK)$, where the former is the $p$-torsion part of the Brauer group of $\KK$, sending $\kappa(a)\smallsmile\kappa(b)$ to the class of the cyclic algebra $(a,b)_\zeta$ (see, e.g., \cite[Prop.~4.7.1]{GilleSzamuely}).} 
\end{rem}
%\todo{Add a remark on cup product pairing and Brauer groups}

\begin{rem}    \label{rem:RpG intersection cores}
By Lemma~\ref{lem:norm and cup} we infer that, for $\alpha\in \rmH^1(G_\KK,\FF_p)$,
    \begin{align*}
        \ker(c_\alpha)= \cores_{\ker(\alpha),G_{\KK}}^1(\rmH^1(\ker (\alpha),\FF_p)),
    \end{align*}
    where 
$c_\alpha\colon\rmH^1(G,\FF_p)\to\rmH^2(G,\FF_p)$ denotes the linear map $\beta\mapsto\alpha\smallsmile\beta$. Thus, we arrive at the following characterization of $\calR_p(G_\KK)$:
\begin{equation*}
    \calR_p(G) =
\bigcap_{\alpha\in\rmH^1(G,\FF_p)}\cores_{\ker(\alpha),G}^1(\rmH^1(\ker(\alpha),\FF_p))=
\bigcap_{U\lhd_p G}\cores_{U,G}^1(\rmH^1(U,\FF_p)),
\end{equation*}
where in the latter intersection $U$ runs through all open normal subgroups of $G$ of index $p$.
\end{rem}

%%%%%%%%%%%%%%%%%%%%%%%%%%%%%

\subsection{The Kaplansky radical and the \texorpdfstring{$\FF_2$}{}-cup radical}
\label{ssec:Traditional radical and F2 cup radical}

Let $\KK$ be a field { containing a primitive $p^{\mathrm{th}}$ root of unity}.
Recall that for $p=2$ the Kaplansky radical ${\calR}_{\mathrm{K}}(\KK)$ of $\KK$ is defined as
\begin{align*}
    {\calR}_{\mathrm{K}}(\KK)=\set{a\in \KK^\times}{ D_\KK\langle 1,-a\rangle =\KK^\times}=\set{a\in \KK^\times}{ \mathrm{im}(N_{\KK(\sqrt{a})/\KK}) =\KK^\times},
\end{align*}
where $D_\KK\langle 1,-a\rangle$ denotes the subgroup of $\KK^\times$ whose elements are represented by the quadratic form $X^2-aY^2$ (cf., e.g., \cite{Lam2005,Kaplansky}). 
{ 
For $p$ arbitrary, the $p$-radical $\calR_p(\KK)$ of $\KK$ is defined as
\[
\calR_p(\KK)=\bigcap_{a\in\KK^\times} N_{\KK(\sqrt[p]{a})/\KK}\left(\KK(\sqrt[p]{a})^\times\right),
\]
so that $\calR_2(\KK)=\calR_{\mathrm{K}}(\KK)$ (cf. \cite[\S~1]{DarioEngler}).}
Note that {${\KK^{\times}}^p\subseteq {\calR}_{p}(\KK)$}. The following proposition is an immediate consequence of Lemma~\ref{lem:norm and cup}.

\begin{lem}
\label{lemma:RelationBetweenRadicals}
{ Let $\kappa:\KK^\times \to \rmH^1(G_\KK,\FF_p)$ be the Kummer map. Then, $\kappa$ induces an isomorphism $\calR_{p}(\KK)/{\KK^{\times}}^p\overset{\sim}{\to}\calR_p(G_{\KK})$ --- in particular, $\calR_{\mathrm{K}}(\KK)/{\KK^{\times}}^2\overset{\sim}{\to}\calR_2(G_{\KK})$ for $p=2$.}
\end{lem}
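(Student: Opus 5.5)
The plan is to show that $\kappa$ maps $\calR_p(\KK)$ onto $\calR_p(G_\KK)$ and that the kernel of this restricted map is exactly $\KK^{\times p}$. Everything reduces to Lemma~\ref{lem:norm and cup} together with the Kummer isomorphism recalled in \S\ref{ssec:Norm Corestriction and Kummer}.

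First, recall that $\kappa\colon\KK^\times\to\rmH^1(G_\KK,\FF_p)$ is surjective with kernel $\KK^{\times p}$. It therefore suffices to prove the set-theoretic identity
\[
\calR_p(\KK)=\kappa^{-1}\bigl(\calR_p(G_\KK)\bigr).
\]
Once this holds, $\kappa$ restricts to a surjection $\calR_p(\KK)\to\calR_p(G_\KK)$. Its kernel is $\KK^{\times p}$, which lies inside $\calR_p(\KK)$ as noted above. This gives the isomorphism $\calR_p(\KK)/\KK^{\times p}\cong\calR_p(G_\KK)$.

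To prove the identity, fix $b\in\KK^\times$. By definition, $b\in\calR_p(\KK)$ if and only if $b\in N_{\KK(\sqrt[p]{a})/\KK}(\KK(\sqrt[p]{a})^\times)$ for every $a\in\KK^\times$. By Lemma~\ref{lem:norm and cup}, this holds if and only if $\kappa(a)\smallsmile\kappa(b)=0$ for every $a\in\KK^\times$. Since $\kappa$ is surjective, $\kappa(a)$ ranges over all of $\rmH^1(G_\KK,\FF_p)$. Using skew-commutativity, which gives $\kappa(b)\smallsmile\beta=-\beta\smallsmile\kappa(b)$ in degree $1$, this is equivalent to $\kappa(b)\smallsmile\beta=0$ for all $\beta\in\rmH^1(G_\KK,\FF_p)$. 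That is precisely the condition $\kappa(b)\in\calR_p(G_\KK)$.

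The only point needing care is the degenerate case where $a\in\KK^{\times p}$. Then $\KK(\sqrt[p]{a})=\KK$, the norm is the identity, and the norm group is all of $\KK^\times$; correspondingly $\kappa(a)=0$, so both conditions hold trivially. Lemma~\ref{lem:norm and cup} is stated for arbitrary $a,b$, so this case is covered regardless. The statement for $p=2$ follows by specialization, since $\calR_2(\KK)=\calR_{\rm K}(\KK)$.
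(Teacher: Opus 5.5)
Your proposal is correct and follows the paper's route: the paper records this lemma as an immediate consequence of Lemma~\ref{lem:norm and cup}, and you have written that consequence out. The details you supply are exactly the right ones: the identity $\calR_p(\KK)=\kappa^{-1}(\calR_p(G_\KK))$ via the norm criterion, surjectivity of $\kappa$ and skew-commutativity, and then passing to the quotient by $\ker\kappa=\KK^{\times p}$.
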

We are now ready to translate the statement of the \lq\lq H-conjecture\rq\rq\, into cohomological terms.
\begin{prop}\label{prop:equivalence}
   Let $\KK$ be a field of characteristic not $2$.
Then the ``H-conjecture'' has positive answer for $\KK$ if and only if for any quadratic extension $\LL/\KK$ the map 
    \begin{equation*}
        c_{\LL/\KK}:\calR_2(\LL)\longrightarrow \calR_2(G_{\KK}),\qquad \alpha\longmapsto \cores_{G_{\LL},G_\KK}^1(\alpha)
    \end{equation*}
    is surjective.
\end{prop}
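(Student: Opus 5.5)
The idea is to translate each side of the H-conjecture equality \eqref{eq:KN radical equality} through the Kummer map, using the commutative square \eqref{diag:cores square} and Lemma~\ref{lemma:RelationBetweenRadicals} for both $\KK$ and $\LL$. First, dispose of the degenerate case: if $a\in\KK^{\times2}$ there is no quadratic extension and \eqref{eq:KN radical equality} is trivial. So fix a quadratic extension $\LL=\KK(\sqrt a)$ and write $N=N_{\LL/\KK}$, $U=G_\LL$, an open normal subgroup of index $2$ in $G_\KK$. Every such $U$ arises this way. Since $\mathrm{char}\,\KK\neq2$, both fields contain $-1$, a primitive square root of unity, so Lemma~\ref{lemma:RelationBetweenRadicals} gives $\calR_{\rm K}(\LL)/\LL^{\times2}\cong\calR_2(U)$ and $\calR_{\rm K}(\KK)/\KK^{\times2}\cong\calR_2(G_\KK)$.

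Next, show that the inclusion $\calR_{\rm K}(\LL)\cdot\KK^\times\subseteq N^{-1}(\calR_{\rm K}(\KK))$ always holds. For $x\in\KK^\times$ we have $N(x)=x^2\in\KK^{\times2}\subseteq\calR_{\rm K}(\KK)$. For $r\in\calR_{\rm K}(\LL)$, the square \eqref{diag:cores square} gives $\kappa(N(r))=\cores^1_{U,G_\KK}(\kappa(r))$, and this lies in $\calR_2(G_\KK)$ by Lemma~\ref{lem:maps between radicals}\ref{it:lem maps between radicals cores}. Since $\KK^{\times2}\subseteq\calR_{\rm K}(\KK)$, it follows that $N(r)\in\calR_{\rm K}(\KK)$. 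So the H-conjecture for $\LL/\KK$ amounts to the reverse inclusion.

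For the main equivalence, first assume $c_{\LL/\KK}$ is surjective. Take $u\in\LL^\times$ with $N(u)\in\calR_{\rm K}(\KK)$. By surjectivity there is $r\in\calR_{\rm K}(\LL)$ with $\kappa(N(r))=\kappa(N(u))$, so $N(u r^{-1})\in\KK^{\times2}$, say $N(ur^{-1})=c^2=N(c)$ with $c\in\KK^\times$. Then $w=ur^{-1}c^{-1}$ has norm $1$. By Hilbert~90, $w=\sigma(v)/v$, where $\sigma$ generates $\Gal(\LL/\KK)$. The key observation is that $\sigma(v)/v=\sigma(v)v/v^2=N(v)\,v^{-2}\in\KK^\times\LL^{\times2}$. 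Since $\LL^{\times2}\subseteq\calR_{\rm K}(\LL)$, this gives $u\in\calR_{\rm K}(\LL)\KK^\times$.

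Conversely, assume the equality \eqref{eq:KN radical equality} and take $\gamma\in\calR_2(G_\KK)$. The step that needs care is producing a norm preimage. Write $\gamma=\kappa(b)$ with $b\in\calR_{\rm K}(\KK)$. By the definition of the Kaplansky radical, $b$ is a norm from $\KK(\sqrt a)$, so $b=N(u)$ for some $u\in\LL^\times$. Then $u\in N^{-1}(\calR_{\rm K}(\KK))=\calR_{\rm K}(\LL)\KK^\times$, so $u=rx$ with $r\in\calR_{\rm K}(\LL)$ and $x\in\KK^\times$. Hence $b=N(r)x^2$, and therefore $\gamma=\kappa(N(r))=c_{\LL/\KK}(\kappa(r))$, proving surjectivity.

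The main obstacle is the Hilbert~90 step in the forward direction, specifically the identity $\sigma(v)/v\in\KK^\times\LL^{\times2}$. All other steps are direct applications of the dictionary set up above.
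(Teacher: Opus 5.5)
Your proof is correct. It uses the same dictionary as the paper's proof: the Kummer map, the norm/corestriction square \eqref{diag:cores square}, and Lemma~\ref{lemma:RelationBetweenRadicals} applied to both $\KK$ and $\LL$. It is, however, more complete than the paper's in two places.

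In the direction ``surjectivity $\Rightarrow$ H-conjecture'', the paper only shows that every $a\in\calR_{\rm K}(\KK)$ can be written as $N_{\LL/\KK}(ub)$ with $u\in\calR_{\rm K}(\LL)$ and $b\in\KK^\times$. That is an image statement, and the paper stops there. Reaching the preimage equality $N_{\LL/\KK}^{-1}(\calR_{\rm K}(\KK))=\calR_{\rm K}(\LL)\KK^\times$ requires exactly your Hilbert~90 observation $\sigma(v)/v=N_{\LL/\KK}(v)v^{-2}$, i.e.\ $N_{\LL/\KK}^{-1}(\KK^{\times2})=\KK^\times\LL^{\times2}$. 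It also requires the easy inclusion $\calR_{\rm K}(\LL)\KK^\times\subseteq N_{\LL/\KK}^{-1}(\calR_{\rm K}(\KK))$, which you prove via Lemma~\ref{lem:maps between radicals}.

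In the other direction, the paper asserts the existence of $u\in\calR_{\rm K}(\LL)$ with $N_{\LL/\KK}(u)=b$ exactly. The hypothesis only gives this up to a factor in $\KK^{\times2}$. You first use that $b$ is a norm from $\LL$ and then factor the preimage as $rx$, which gives the correct statement $b=N_{\LL/\KK}(r)x^2$. This suffices because $\kappa$ kills squares.

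Your argument also proves the equivalence one quadratic extension at a time, which is slightly sharper than the statement.

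One small correction: the equality is not ``trivial'' when $a\in\KK^{\times2}$. For $\LL=\KK$ the right-hand side is all of $\KK^\times$, so the equality would force $\calR_{\rm K}(\KK)=\KK^\times$. The conjecture simply concerns only $a\notin\KK^{\times2}$, matching the ``quadratic extension'' in the statement, so you should just drop that sentence.
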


\begin{proof}
    Assume that the ``H-conjecture'' has positive answer  for $\KK$, and let $\LL=\KK(\sqrt{a})$ be an arbitrary quadratic extension of $\KK$. 
    Pick $\beta\in \calR_2(G_\KK)$ and $b\in \KK^\times$ such that $\kappa(b)=\beta$. Then $b\in {\calR_{\mathrm{K}}}(\KK)$ by Lemma~\ref{lemma:RelationBetweenRadicals}.
    By assumption, there exists $u\in \calR_{\rm K}(\K(\sqrt{a}))$ such that $N_{\K(\sqrt{a})/\KK}(u)=b$. Using the commutative diagram \eqref{diag:cores square} (with $p=2$), we have 
    \begin{align*}
        \beta=\cores^1_{G_{\K(\sqrt{a})},G_\KK}(\kappa(u))=c_{\LL/\KK}(\kappa(u)),
    \end{align*}
    and $\kappa(u)\in\calR_2(G_{\K(\sqrt{a})})$, again by Lemma~\ref{lemma:RelationBetweenRadicals}.
    This shows that for $\K(\sqrt{a})/\KK$ the map $c_{\K(\sqrt{a})/\K}$ is surjective as claimed. 

Conversely, assume that the map $c_{\LL/\KK}$ is surjective for every quadratic extension $\LL/\KK$. Given $a\in {\calR_{\mathrm{K}}}(\KK)$, $\kappa(a)\in\calR_2(G_{\KK})$ by Lemma~\ref{lemma:RelationBetweenRadicals}, and hence there is $\gamma=\kappa(u)\in\calR_2(G_{\LL})$, $u\in\LL^\times$, such that $$\kappa(a)=\cores_{G_\LL,G_\KK}^1(\gamma)=\cores_{G_\LL,G_\KK}^1(\kappa(u)).$$
Moreover, $u\in\calR_{\rm K}(\LL)$ by Lemma~\ref{lemma:RelationBetweenRadicals}.
The commutativity of the diagram \eqref{diag:cores square} implies that $\kappa(N_{\LL/\KK}(u))=\kappa(a)$. 
Since $\ker(\kappa)=\KK^{\times 2}$, one has $$a=N_{\LL/\KK}(u)\cdot b^2=N_{\LL/\KK}(ub)\qquad\text{for some }b\in \KK^\times,$$ as $b^2=N_{\LL/\KK}(b)$. This completes the proof.
\end{proof}

%%%%%%%%%%%%%%%%%%%%%%%%%%%%%%%%%%%%%%%%%

\subsection{Passing to the maximal pro-\texorpdfstring{$p$}{} quotient}
The following proposition shows that the $p$-Kijima--Nishi property --- similar to the $\FF_p$-cup radical --- only depends on the maximal pro-$p$ quotient of the group under consideration.
\begin{prop}
\label{prop:passing to the pro-p quotient}
    Let $p$ be a prime, $G$ a profinite group and $G(p)$ its maximal pro-$p$ quotient. Then $G$ has the $p$-Kijima--Nishi property if and only if $G(p)$ has the $p$-Kijima--Nishi property.
\end{prop}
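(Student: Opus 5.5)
Let $\pi\colon G\to G(p)$ be the canonical projection and $N=\ker\pi$. The plan is to set up a bijection between the open normal subgroups of index $p$ in $G$ and in $G(p)$, and then to identify, compatibly with corestriction, the radicals appearing on both sides of Definition~\ref{defin:pKN}.

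\textbf{Step 1: matching the subgroups.} An open normal subgroup $U$ of index $p$ in $G$ is the kernel of a nonzero $\alpha\in\rmH^1(G,\FF_p)$. Every such $\alpha$ factors through $G(p)$, so $N\le U$ and $U=\pi^{-1}(\bar U)$ with $\bar U=U/N$ open normal of index $p$ in $G(p)$. Conversely, every $\bar U$ of index $p$ in $G(p)$ pulls back to such a $U$. This gives the bijection $U\leftrightarrow \bar U$.

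\textbf{Step 2: identifying $U(p)$ with $\bar U(p)$.} We claim that $\bar U(p)$ is naturally the maximal pro-$p$ quotient $U(p)$ of $U$. It suffices to show that every open normal subgroup $V$ of $U$ with $U/V$ a finite $p$-group contains $N$. Since $U$ has finite index in $G$, the core $V_0=\bigcap_{g\in G}V^g$ is open and normal in $G$. Moreover $U/V_0$ embeds into $\prod_g U/V^g$, so it is a $p$-group, and hence $G/V_0$ is a $p$-group because $[G:U]=p$. It follows that $N\le V_0\le V$.

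Consequently $U(p)=U/N=\bar U$, since $N$ is exactly the kernel of $U\to U(p)$. Remark~\ref{rem: radical reduction to pro-p case} applied to $U$ then shows that inflation $\mathrm{inf}_U\colon\rmH^1(\bar U,\FF_p)\to\rmH^1(U,\FF_p)$ is an isomorphism carrying $\calR_p(\bar U)$ onto $\calR_p(U)$. The same holds for $\mathrm{inf}_G\colon \rmH^1(G(p),\FF_p)\to\rmH^1(G,\FF_p)$.

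\textbf{Step 3: compatibility with corestriction.} We need
\[
\cores^1_{U,G}\circ\mathrm{inf}_U=\mathrm{inf}_G\circ\cores^1_{\bar U,G(p)}
\]
on $\rmH^1$. This is the standard compatibility of inflation and corestriction for $N\le U$ open. In degree $1$ one can verify it directly, since corestriction on $\Hom(-,\FF_p)$ is precomposition with the transfer $V_{G\to U}^{\rm ab}$. The transfer commutes with the quotient by $N$, because $N\le U$ and coset representatives of $U$ in $G$ map to coset representatives of $\bar U$ in $G(p)$. Alternatively, one can cite \cite[Prop.~1.5.5]{NSW2008}.

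\textbf{Conclusion.} Combining Steps 2 and 3, $c_{U,G}$ and $c_{\bar U,G(p)}$ are conjugate via the isomorphisms $\mathrm{inf}_U$ and $\mathrm{inf}_G$. Hence one map is surjective if and only if the other is. Running over the bijection of Step 1 gives the proposition.

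The main obstacle is Step 2: ensuring that $N$ is also the kernel of $U\to U(p)$, so that the remark applies to $U$ with quotient exactly $\bar U$. The core argument above handles this.
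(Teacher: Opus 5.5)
Your proof is correct and follows the same route as the paper. Both arguments match the index-$p$ open normal subgroups of $G$ and $G(p)$, show that $\pi(U)=U/N$ is the maximal pro-$p$ quotient of $U$, transport the radicals via the inflation isomorphism of Remark~\ref{rem: radical reduction to pro-p case}, and conclude with the compatibility of inflation and corestriction. The only difference is in Step 2. The paper obtains $O^p(U)=O^p(G)$ by describing $O^p$ as the closed subgroup generated by the Sylow $\ell$-subgroups for $\ell\neq p$, while your core argument reaches the same equality a little more elementarily. You also make explicit the converse direction of the subgroup correspondence, which the paper leaves implicit.
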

\begin{proof}
    Let $\pi:G\to G(p)$ be the canonical projection and $U\unlhd_oG$ be of index $p$. Then $\pi(U)\unlhd_o G(p)$ is also of index $p$ by the universal property of the maximal pro-$p$ quotient. Thus, it remains to show that $\pi(U)\cong U(p)$. The kernel of $\pi$ is ${O}^p(G)$, which is equal to
    \begin{align*}
        \pres{ Q\in \mathrm{Syl}_{p'}(G)}{p'\text{ is a prime different from } p}_{\rm cl}.
    \end{align*}
    We have ${O}^p(U)\subseteq O^p(G)$, since every $p'$-Sylow subgroup of $U$ is also contained in a $p'$-Sylow subgroup of $G$. Since $G/U\cong G(p)/\pi(U)$, we have $O^p(G)=\ker \pi\subseteq U$. This shows that every $p'$-Sylow subgroup of $G$ is also contained in $U$ and therefore $O^p(U)=O^p(G)$, whence $\pi(U)\cong U(p)$. Thus, Remark~\ref{rem: radical reduction to pro-p case} implies $\calR_p(U)\cong \calR_p(\pi(U))$. The commutativity of the following diagram, which follows from \cite[Prop. 1.5.5 (ii)]{NSW2008} finishes the proof.
    \begin{equation*}
        \begin{tikzcd}[column sep=huge]
            \rmH^1(\pi(U),\FF_p)\arrow[r,"\inf^1_{\pi(U),U}","\sim"']\arrow[d,swap,"\cores^1_{\pi(U),G(p)}"]&\rmH^1(U,\FF_p)\arrow[d,"\cores^1_{U,G}"]\\
            \rmH^1(G(p),\FF_p)\arrow[r,"\inf^1_{G(p),G}","\sim"']&\rmH^1(G,\FF_p)
        \end{tikzcd}
    \end{equation*}
\end{proof}

\begin{rem}
{ By the Merkurjev-Suslin Theorem --- the ``degree-2 step'' of the celebrated Norm Residue Isomorphism Theorem ---, $\rmH^2(G_{\KK}(p),\F_p)$ is generated by cup-products of elements of degree 1 (cf., e.g., \cite[Thm.~4.5.7--Thm.~4.6.6]{GilleSzamuely}). 
Since $G_{\KK}(p)$ is a free pro-$p$ group if and only if $\rmH^2(G_{\KK}(p),\F_p)$ is trivial (cf., e.g., \cite[Prop. 3.5.17]{NSW2008}), one deduces that $G_{\KK}(p)$ is a free pro-$p$ group if and only if $\calR_p(G_{\KK}(p))=\rmH^1(G_{\KK}(p),\F_p)$ --- as remarked by Dario--Engler using the $p$-radical, see \cite[\S 2]{DarioEngler}.}
\end{rem}

\begin{rem}
    \label{rem: KN property with all subgroups}
    The proof of Proposition~\ref{prop:passing to the pro-p quotient} makes frequent use of the fact that we require the surjectivity of the corestriction only for normal subgroups $U$ of index $p$. In the pro-$p$ case, this poses no restriction, as all subgroups of index $p$ are automatically normal.

\noindent    One could also pose a stronger version of the $p$-Kijima--Nishi property, asking for the map induced by corestriction $\calR_p(U)\to \calR_p(G)$ being surjective for all closed subgroups of index $p$. In this case the Proposition~\ref{prop:passing to the pro-p quotient} does not remain valid, as the following concrete example shows:

\noindent    Let $p$ be an odd prime and set $G\coloneq \ZZ_p\rtimes\ZZ_p^\times$ with the canonical action. Then it is easy to see that $\rmH^1(G,\FF_p)\cong \FF_p$ by the five-term sequence using that $\ZZ_p^\times$ acts non-trivially on $\rmH^1(\ZZ_p,\FF_p)$. Now, since the cup product is graded-commutative we have $\calR_p(G)\cong \FF_p$. Consider the subgroup $U\coloneq (p\ZZ_p)\rtimes \ZZ_p^\times$. Then also $\calR_p(U)\cong \FF_p$ and it is not difficult to verify by explicit computation that the corestriction map $\rmH^1(U,\FF_p)\to \rmH^1(G,\FF_p)$ is trivial.

\noindent    In contrast, $G(p)\cong \ZZ_p$, which has the $p$-Kijima--Nishi property --- either by direct computation (cf. Example~\ref{ex:cyclic groups}) or using Example~\ref{ex:KNFree}.
\end{rem}

%%%%%%%%%%%%%%%%%%%%%%%%%%%%%%%%%%%%%%%%%%5
\subsection{Examples of pro-\texorpdfstring{$p$}{} groups with the \texorpdfstring{$p$}{}-Kijima--Nishi property}
\label{ssec:Examples for KN}
Here we exhibit some concrete and basic examples of pro-$p$ groups which satisfy --- or do not satisfy --- the $p$-Kijima--Nishi property.
\begin{ex}[pro-cyclic groups]
\label{ex:cyclic groups}
    Let $G$ be a non-trivial pro-cyclic pro-$p$ group, then one easily sees that $\calR_p(G)=\rmH^1(G,\FF_p)\cong \FF_p$ except if $p=2$ and $G\cong C_2$. In this case we find $\calR_p(C_2)=0$.

    The unique subgroup $U$ of $G$ of index $p$ is $G^p$. From the standard formula for corestriction one sees that if $U\neq 1$, then $\cores_{G^p,G}^1:\rmH^1(G^p,\FF_p)\to \rmH^1(G,\FF_p)$ is surjective. Thus $G$ has the $p$-Kijima--Nishi property if and only if $G$ is not isomorphic to $C_p$ for $p$ odd or $C_4$ if $p=2$. 
\end{ex}

\begin{ex}[Powerful groups]
\label{ex:powerful groups}
    Let $G$ be a finitely generated {\sl powerful} pro-$p$ group, i.e., the closed subgroup $G'\leq G$ generated by the commutators is contained in the closed subgroup $G^p\leq G$ generated by the $p$-powers, and also $G'\leq G^4$ if $p=2$ (cf., e.g., \cite[Def.~3.1]{DdSMS}). If $p=2$ suppose further that $G$ is torsion-free. 

    As $G$ is not cyclic, one has $\dim_{\FF_p}\rmH^1(G,\FF_p)\geq 2$ and the cup product induces an injective map $\Lambda^2(\rmH^1(G,\FF_p))\to \rmH^2(G,\FF_p)$ (cf. \cite[Thm.~5.1.6 and Cor~5.1.7]{SymondsWeigel}). Therefore, $\calR_p(G)=0$, and $G$ satisfies the $p$-Kijima--Nishi property by Corollary~\ref{cor:trivial R}. For example, this applies to finitely generated free abelian pro-$p$ groups or elementary abelian groups.
\end{ex}
\begin{ex}[Free pro-$p$ groups]
\label{ex:KNFree}
    Let $F$ be a profinite group of $p$-cohomological dimension $1$, then $F$ has the $p$-Kijima--Nishi property. In particular, free pro-$p$ groups have the $p$-Kijima--Nishi property. 

    Indeed, since every open subgroup $U$ of $F$ has $p$-cohomological dimension $\leq 1 $, we have $\calR_p(U)=\rmH^1(U,\F_p)$, and the statement is equivalent to showing that the corestriction
    \begin{align*}
        \rmH^1(U,\FF_p)\to \rmH^1(F,\FF_p)
    \end{align*} 
    is surjective. This is a consequence of \cite[Prop. 3.3.11]{NSW2008}.
\end{ex}

Here we give an example of a pro-$p$ group that is known not to occur as maximal pro-$p$ Galois groups of fields containing a primitive $p^{\mathrm{th}}$-root of unity and that does not have the $p$-Kijima--Nishi property.
\begin{ex}[Heisenberg group]
\label{ex:Heisenberg}
For $p$ odd, let $G$ be the \emph{Heisenberg pro-$p$ group}, namely, 
    \[\begin{split}
        G&=\pres{x,y}{[x,[x,y]]=[y,[x,y]]=1}\\
        &=\pres{x,y,z}{[x,y]=z,[x,z]=[y,z]=1}.
    \end{split}\]
Since the former is a minimal presentation, Proposition~\ref{prop:cup product and shape of relations} implies $\alpha\smallsmile\beta=0$ for every $\alpha,\beta\in\rmH^1(G,\FF_p)$, and thus $\calR_p(G)=\rmH^1(G,\FF_p)$. Since $G$ contains a two-generated abelian subgroup, $\rmH^2(G,\FF_p)\neq 0$, and hence, by the Norm Residue Isomorphism Theorem, $G$ does not occur as maximal pro-$p$ Galois groups of any field containing a primitive $p^{\mathrm{th}}$-root of unity.

\noindent
Now, consider the open subgroup $U\subseteq G$ generated by $u\coloneq x^p,y,z$. {It is straightforward to verify that $U$ is normal, has index $p$, and has minimal presentation} 
\[U=\pres{u,y,z}{[u,y]=z^p, [u,z]=[y,z]=1}\]
Therefore, $\dim \rmH^1(U,\FF_p)=3$. Since $U$ is powerful, one has $\calR_p(U)=0$ (cf. Example~\ref{ex:powerful groups}), and hence $G$ does not have the $p$-Kijima--Nishi property.
\end{ex}

%%%%%%%%%%%%%%%%%%%%%%%%%%%%%%%%%%%%%%%%%%%%%%%%%

\subsection{Testing the $p$-Kijima--Nishi property on open subgroups}
Sometimes it can be hard to check the $p$-Kijima--Nishi property directly for a profinite group, as there might not be a direct description of all normal subgroups of index $p$ (for example in the case of semidirect products). The next proposition says that it is sufficient to test the property for an open subgroup $U$ of $G$ together with the surjectivity of $c_{U,G}$ for this concrete subgroup. This is especially effective if $G$ has virtual $p$-cohomological dimension $1$, as we will show in Proposition~\ref{prop:free semidirect finite}.
\begin{prop}
\label{prop:KN property based on subgroup}
    Let $G$ be a profinite group containing an open subgroup $U$ such that $c_{U,G}:\calR_p(U)\to \calR_p(G)$ is surjective and $U$ has the $p$-Kijima--Nishi property. Then also $G$ has the $p$-Kijima--Nishi property. 
\end{prop}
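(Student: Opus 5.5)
The plan is to fix an arbitrary open normal subgroup $V\unlhd G$ of index $p$ and to exhibit every element of $\calR_p(G)$ as $c_{V,G}$ of some element of $\calR_p(V)$. Throughout I will use two facts. The first is transitivity of corestriction: for open subgroups $W\le V\le G$ one has $\cores^1_{W,G}=\cores^1_{V,G}\circ\cores^1_{W,V}$ (cf. \cite[Ch.~I, \S~5]{NSW2008}). The second is Lemma~\ref{lem:maps between radicals}~\ref{it:lem maps between radicals cores}, which I will apply to $W$ as an open subgroup of $V$: it says $\cores^1_{W,V}$ maps $\calR_p(W)$ into $\calR_p(V)$. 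That lemma only requires $W$ to be open in $V$, not normal. Now let $\gamma\in\calR_p(G)$. By the hypothesis on $U$, there is $\alpha\in\calR_p(U)$ with $\gamma=\cores^1_{U,G}(\alpha)$. I then split into two cases according to whether $U\subseteq V$.

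\emph{Case $U\subseteq V$.} Set $\beta\coloneq\cores^1_{U,V}(\alpha)$. Since $U$ is open in $V$, Lemma~\ref{lem:maps between radicals}~\ref{it:lem maps between radicals cores} gives $\beta\in\calR_p(V)$. By transitivity, $c_{V,G}(\beta)=\cores^1_{V,G}\cores^1_{U,V}(\alpha)=\cores^1_{U,G}(\alpha)=\gamma$.

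\emph{Case $U\not\subseteq V$.} Put $W\coloneq U\cap V$. Because $V$ is normal in $G$, $W$ is normal in $U$, and $U/W\cong UV/V$. Since $U\not\subseteq V$, the group $UV/V$ is a nontrivial subgroup of $G/V\cong\ZZ/p$, so $W$ has index exactly $p$ in $U$. The $p$-Kijima--Nishi property of $U$ then yields $\delta\in\calR_p(W)$ with $\cores^1_{W,U}(\delta)=\alpha$. Set $\beta\coloneq\cores^1_{W,V}(\delta)$; this lies in $\calR_p(V)$ by Lemma~\ref{lem:maps between radicals}~\ref{it:lem maps between radicals cores}, since $W$ is open in $V$. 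Applying transitivity along the two chains $W\le U\le G$ and $W\le V\le G$ gives
\[
c_{V,G}(\beta)=\cores^1_{V,G}\cores^1_{W,V}(\delta)=\cores^1_{W,G}(\delta)=\cores^1_{U,G}\cores^1_{W,U}(\delta)=\cores^1_{U,G}(\alpha)=\gamma .
\]

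In both cases $\gamma$ lies in the image of $c_{V,G}$. Since $V$ was arbitrary, $G$ has the $p$-Kijima--Nishi property. There is no serious obstacle here. The only points needing care are the index computation $[U:U\cap V]=p$ in the second case, and making sure Lemma~\ref{lem:maps between radicals} is applied to open, possibly non-normal, subgroups, which its statement allows.
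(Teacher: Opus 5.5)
Your proof is correct and follows the paper's argument: the same case split on whether $[U:U\cap V]$ is $1$ or $p$, using transitivity of corestriction in the first case and the factorization $c_{V,G}\circ c_{U\cap V,V}=c_{U,G}\circ c_{U\cap V,U}$ in the second. You also check explicitly that $U\cap V$ is normal in $U$, which is needed to apply the $p$-Kijima--Nishi property of $U$ (defined via normal subgroups of index $p$); the paper leaves this detail implicit.
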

\begin{proof}
    Let $V$ be an arbitrary open subgroup of index $p$ in $G$. The index $[U:V\cap U]$ is either $1$ or $p$. If it is one, then $U$ is contained in $V$ and by $c_{U,G}=c_{V,G}\circ c_{U,V}$, we see that $c_{V,G}$ is surjective. 

    Now assume that $V\cap U$ has index $p$ in $U$, and consider the following commutative diagram:
    \begin{equation*}
        \begin{tikzcd}
            \calR_p(U\cap V)\arrow[rr,"c_{U\cap V,U}",two heads]\arrow[d,"c_{U\cap V,V}"]&&\calR_p(U)\arrow[d,"c_{U,G}",two heads]\\
            \calR_p(V)\arrow[rr,"c_{V,G}"]&&\calR_p(G)
        \end{tikzcd}
    \end{equation*}
    The upper horizontal arrow is surjective, since $U$ has the $p$-Kijima--Nishi property. Thus, the composition $c_{U\cap V,G}=c_{V,G}\circ c_{U\cap V,V}$ is surjective and thereby also $c_{V,G}$, implying that $G$ has the $p$-Kijima--Nishi property.
\end{proof}
The following proposition applies the previous criterion for a semidirect product of a free profinite group with an elementary abelian group. We will use it in the proof of Theorem~\ref{thm: rational function field trivial radical} to show the $2$-Kijima--Nishi property for fields like $\RR(t)$.
\begin{prop}
\label{prop:free semidirect finite}
    Let $G=F\rtimes H$ be a semidirect product of profinite groups satisfying the following properties:
    \begin{enumerate}
        \item The maximal pro-$p$ quotient $H(p)$ of $H$ is isomorphic to $C_p^k$ with $k\geq 2 $ if $p$ is odd.
        \item $F$ has $p$-cohomological dimension $\leq 1$, and there exists a discrete $H$-set $X$ such that $\rmH^1(F,\FF_p)\cong \FF_p[X]$ as $H$-modules.
    \end{enumerate}
    Then $G$ has the $p$-Kijima--Nishi property.
\end{prop}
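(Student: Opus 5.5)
The plan is to apply Proposition~\ref{prop:KN property based on subgroup} with the open subgroup $U=F$. First I would make $F$ open. By Proposition~\ref{prop:passing to the pro-p quotient} and Remark~\ref{rem: radical reduction to pro-p case}, only the maximal pro-$p$ quotient $G(p)$ matters. Let $N=\ker(H\to H(p))$. Since $H^1(N,\FF_p)=0$, I would check that
\[
G(p)\cong F'\rtimes H(p),\qquad F'\text{ a quotient of }F(p),
\]
where $F'$ is again of $p$-cohomological dimension $\le1$ (it is free pro-$p$) and $\rmH^1(F',\FF_p)\cong \FF_p[X]^N\cong\FF_p[X/N]$ is a permutation $H(p)$-module. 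I expect this bookkeeping to be routine, via inflation–restriction. So we may assume $H=C_p^k$ is finite, and then $F$ is open in $G$. By Example~\ref{ex:KNFree}, $F$ has the $p$-Kijima--Nishi property. It therefore remains to prove that $c_{F,G}\colon\calR_p(F)=\rmH^1(F,\FF_p)\to\calR_p(G)$ is surjective.

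Next I would locate $\calR_p(G)$. The inclusion $\iota\colon H\to G$ induces a surjection $\iota^*\colon\rmH^1(G,\FF_p)\to\rmH^1(H,\FF_p)$, because the semidirect product splits. Hence Lemma~\ref{lem:maps between radicals}\ref{it:lem maps between radicals res} gives $\iota^*(\calR_p(G))\subseteq\calR_p(H)$. Under hypothesis~(1), $\calR_p(H)=0$: for $p$ odd and $k\ge2$ this is Example~\ref{ex:powerful groups}, and for $p=2$ one has $x^\ast\smallsmile x^\ast\neq0$ in $\rmH^\bullet(C_2^k,\FF_2)$. So every $\alpha\in\calR_p(G)$ vanishes on $H$. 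Since $\rmH^1(G,\FF_p)\to \rmH^1(F,\FF_p)^H\oplus\rmH^1(H,\FF_p)$ is injective by the split five-term sequence, $\alpha$ is determined by $\res_{G,F}(\alpha)\in\FF_p[X]^H$.

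On the other side, I would compute the corestriction. Since $FH=G$ and $F\cap H=1$, the Mackey formula gives $\res_{G,H}\circ\cores_{F,G}=\cores_{1,H}\circ\res_{F,1}=0$. Moreover $\res_{G,F}\circ\cores_{F,G}$ is the norm $\sum_{h\in H}h$ on $\FF_p[X]$. The norm of a basis element $x$ is $|\mathrm{Stab}_H(x)|$ times its orbit sum, which vanishes in $\FF_p$ unless the stabilizer is trivial. Hence the image of $c_{F,G}$ consists exactly of the classes that vanish on $H$ and whose restriction to $F$ is a combination of sums over free $H$-orbits. Surjectivity therefore reduces to the following claim: if $\alpha\in\calR_p(G)$, then $\res_{G,F}(\alpha)$ has zero coefficient on every orbit $Hx$ with nontrivial stabilizer.

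This claim is the main obstacle. To prove it, I would pick $h\in\mathrm{Stab}_H(x)$ of order $p$ and $\chi\in\rmH^1(G,\FF_p)$ inflated from $H$ with $\chi(h)\neq0$. I would then restrict $\alpha\smallsmile\chi=0$ to the subgroup $K=F\rtimes\langle h\rangle$. For $p=2$ I would also use $\alpha\smallsmile\alpha=0$.

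On $K$, the $\langle h\rangle$-fixed basis element $x$ of $\rmH^1(F,\FF_p)$ corresponds to a quotient $K\twoheadrightarrow \ZZ/p\times\ZZ/p$ or $\ZZ/p^2$, or to a free-product factor $\langle y\rangle\times\langle h\rangle$. I would then check that $\res(\alpha)\smallsmile\chi$ is detected there, via Proposition~\ref{prop:cup product and shape of relations} applied to a minimal presentation of that quotient. This would show it is nonzero whenever the coefficient of $x$ is nonzero. If this direct detection turns out awkward, the fallback is to compute $\rmH^2(K,\FF_p)$ from the Hochschild--Serre spectral sequence. That sequence degenerates nicely because $\mathrm{cd}_p F\le1$ and $\rmH^1(F,\FF_p)$ is a permutation module, whose fixed-point summands $\FF_p$ contribute the relevant classes in $E_2^{1,1}$.
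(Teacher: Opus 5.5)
The core of your argument for $H\cong C_p^k$ matches the paper's, provided you use your fallback for the key step. The reduction to $H$ pro-$p$, however, rests on a false claim.

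\textbf{The reduction to $H$ pro-$p$.} The claim that $F'$ is free pro-$p$ fails in general. Take $p=2$, $H=\mathrm{SL}_2(\FF_3)$ (so $H(2)=1$), and let $G=\mathrm{SL}_2(\FF_3)\amalg C_2\amalg C_2$ be the free profinite product. Let $F$ be the kernel of the map $G\to H$ that is the identity on the first factor and sends both involutions to $-1$.
\begin{itemize}
\item $F$ is free profinite of rank $24$.
\item A Bass--Serre (Mayer--Vietoris) computation gives $\rmH^1(F,\FF_2)\cong\FF_2[H/Z]^2$ with $Z=\{\pm1\}$, which is a permutation module.
\item Nevertheless $F'=G(2)\cong C_2\amalg C_2$ (free pro-$2$ product) has torsion.
\end{itemize}
Adding a direct factor $C_2$ to $H$ that acts trivially on $F$ gives the same phenomenon with $k=1$.

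The mechanism is that $O^p(H)$ can contain $p$-elements (here $-1$) that act nontrivially on $F$ but trivially on $\rmH^1(F,\FF_p)$. Taking coinvariants then creates torsion. Once $F'$ is not free, neither $\calR_p(F')=\rmH^1(F',\FF_p)$ nor Example~\ref{ex:KNFree} is available. The proposition itself survives this example, since $\calR_2(C_2\amalg C_2)=0$; only your reduction breaks.

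The paper never attempts this reduction. It assumes $H\cong C_p^k$ ``for simplicity'' and only asserts that the general case is more technical. You should either restrict in the same way or give a genuinely different reduction.

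\textbf{The case $H\cong C_p^k$.} Here your skeleton is the paper's:
\begin{itemize}
\item take $U=F$ in Proposition~\ref{prop:KN property based on subgroup};
\item the $H$-component of $\alpha\in\calR_p(G)$ vanishes because $\calR_p(H)=0$;
\item $\res_{G,F}\circ\cores_{F,G}$ is the norm $N_H$ on $\FF_p[X]$ (your Mackey remark $\res_{G,H}\circ\cores_{F,G}=0$ pins down the image of $c_{F,G}$ cleanly);
\item everything reduces to showing that $\res_{G,F}(\alpha)$ is supported on free orbits.
\end{itemize}

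For that last step only your fallback works, and it is the paper's argument run on $K=F\rtimes\langle h\rangle$ instead of $G$. It needs two ingredients:
\begin{itemize}
\item degeneration of the Hochschild--Serre spectral sequence for a split extension with $\mathrm{cd}_pF\le1$;
\item the multiplicativity $\partial_2(\alpha\smallsmile\inf\chi)=\partial_1(\alpha)\smallsmile\chi$, which you should state explicitly.
\end{itemize}
With these, the summand $\rmH^1(\langle h\rangle,\FF_p x)\cong\FF_p$ of $E_2^{1,1}$ detects the coefficient of $x$. Working with the cyclic group $\langle h\rangle$ even spares you Shapiro's lemma.

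Your primary plan is not viable as stated, for two reasons.
\begin{itemize}
\item The quotient $K/\ker(x)$ is always $C_p\times C_p$, because the extension splits. But $\alpha|_K$ is not inflated from this quotient, so a minimal presentation of it detects nothing.
\item Subgroups $\langle y\rangle\times\langle h\rangle$ with $1\neq y\in F$ need not exist. For $F$ free pro-$2$ on $u,v$ and $h$ inverting both, one has $K\cong C_2\amalg C_2\amalg C_2$, and $h$ centralizes no nontrivial element of $F$.
\end{itemize}
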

\begin{proof}
    For simplicity, we assume that $H$ is already pro-$p$ and hence $H\cong C_p^k$. The ideas for the general case are the same, but the arguments become substantially more technical.

    By Proposition~\ref{prop:KN property based on subgroup} it is sufficient to show that $c_{F,G}:\calR_p(F)\to \calR_p(G)$ is surjective, since $F$ has the $p$-Kijima--Nishi property (see Example~\ref{ex:KNFree}). Write $\Omega\coloneq H\backslash X$ for the set of orbits of the action of $H$ on $X$ and for $\omega\in \Omega$ write $H_\omega$ for the stabilizer of $\omega$ in $H$, which is well-defined as $H$ is abelian. 
    
    Firstly, we show that $\calR_p(G)\cong \bigoplus_{\Omega_r}\FF_p$, where $\Omega_r\coloneq \set{\omega\in \Omega}{ H_\omega=1}$. We observe that for every $n\in \NN$ there is a short exact sequence
    \[
\xymatrix{ 0\ar[r] & \rmH^n(H,\FF_p)\ar^-{\inf^n_{H,G}}[r] & \rmH^n(G,\FF_p)\ar^-{\partial_n}[r] & \rmH^{n-1}(H,\rmH^1(F,\FF_p))\ar[r] & 0}\]
    coming from the Hochschild--Serre spectral sequence, which degenerates in the second page (an alternative construction can be found in \cite[Prop. 6.8.2]{GilleSzamuely}). Notice that $\partial_1:\rmH^1(G,\FF_p)\to \rmH^1(F,\FF_p)^H$ is induced by the restriction morphism. By Shapiro's Lemma we have natural isomorphisms
    \begin{align*}
        {\rm sh}^n:\rmH^{n}(H,\rmH^1(F,\FF_p))\cong \bigoplus_{\omega \in \Omega}\rmH^{n}(H,\FF_p[\omega])\overset{\bigoplus_{\omega}{\rm sh}^n_\omega}\longrightarrow \bigoplus_{\omega\in \Omega}\rmH^n(H_\omega,\FF_p).
    \end{align*}
    Before studying the radical of $G$, we state two facts about the compatibility between the maps introduced above and the cup product. The first one being that for any $\alpha\in \rmH^n(G,\FF_p)$ and $\beta\in \rmH^m(H,\FF_p)$ we have by \cite[Lem. 6.8.4]{GilleSzamuely}
    \begin{align*}
        \partial_{n+m}(\alpha\smallsmile {\inf}^m_{H,G}(\beta))=\partial_{n}(\alpha)\smallsmile \beta.
    \end{align*}
    Furthermore, we infer that for $\alpha\in \rmH^n(H,\FF_p[\omega])$ and $\beta\in \rmH^m(H,\FF_p)$
    \begin{align*}
        {\rm sh}^{n+m}_\omega(\alpha \smallsmile \beta)={\rm sh}_\omega^n(\alpha)\smallsmile \res^m_{H,H_\omega}(\beta)
    \end{align*}
    This is an easy consequence of the functoriality of the cup product and the construction of the Shapiro isomorphism.
    
    Let $\alpha\in \rmH^1(H,\FF_p)$ be such that $\inf_{H,G}^1(\alpha)\in \calR_p(G)$. Then $\alpha\in \calR_p(H)$, which is trivial by the assumption on the structure of $H$. This implies that the natural map $\calR_p(G)\to \rmH^1(F,\FF_p)$ is injective. For $\alpha\in \calR_p(G)$ we write $\partial_1(\alpha)=\sum_{\omega\in \Omega}\alpha_\omega$ with $\alpha_\omega\in \rmH^0(H,\FF_p[\omega])=\FF_p$. Then for any $\beta\in \rmH^1(H,\FF_p)$ we have
    \begin{align*}
        0={\rm sh^2}(\partial_2(\alpha\smallsmile {\inf}^1(\beta)))={\rm sh}^2(\partial_1(\alpha)\smallsmile \beta)=\sum_{\omega\in \Omega}{\rm sh}^1_{\omega}(\alpha_\omega)\smallsmile\res_{H,H_\omega}^1(\beta).
    \end{align*}
    From the injectivity of ${\rm sh}^1_\omega$, we immediately see that if $\alpha_\omega\neq 0$, then $\res^1_{H,H_\omega}=0$, which is equivalent to $H_\omega=1$ and therefore $\omega\in \Omega_r$. This gives one of the desired inclusions. The other one follows by similar arguments.

    Now we show that the corestriction $\calR_p(F)=\rmH^1(F,\FF_p)\to \calR_p(G)$ is surjective. For that consider the following commutative diagram:
    \begin{equation*}
        \begin{tikzcd}
            \rmH^1(F,\FF_p)\arrow[r,"c_{F,G}"]\arrow[d,equal]&\calR_p(G)\arrow[d,"\res_{G,F}"]\\
            \bigoplus_{\omega \in \Omega}\FF_p[\omega]\arrow[r,"N_{H}"] &\bigoplus_{\omega\in \Omega_r}\FF_p[\omega]^H
        \end{tikzcd}
    \end{equation*}
    Here we have $N_H:\FF_p[\omega]\to \FF_p[\omega]$, $x\mapsto \sum_{h\in H}hx$. Notice that this map is zero when $H_\omega\neq 1$, and, if $H_\omega=1$, then it is surjective, as $\FF_p[\omega]^H\cong \FF_p$, which is generated by $N_H(1)$. This shows that $N_H$ is surjective, while the surjectivity of $c_{F,G}$ follows from the fact that the right vertical arrow is an isomorphism.
\end{proof}

%%%%%%%%%%%%%%%%55
%%%
%%%%%%%%%%%%%%%%%%%%%%%%%%%%%%%%

\section{Arithmetic considerations}
\label{sec:Arithmetic and geometric fields}
In this section we sometimes write $\rmH^n(\KK,A)$ instead of $\rmH^n(G_\KK,A)$ for discrete $G_\KK$ modules $A$ to simplify notation, when several fields are involved.

The goal of this section is to verify the $p$-Kijima--Nishi property for fields whose maximal pro-$p$ Galois group is not necessarily finitely generated. Most of the results presented here are analogous to the case $p=2$, which was treated in \cite{BecherLeep2014}. Since their methods are specific to the theory of quadratic forms, we resort to more cohomological techniques, which nevertheless follow a similar philosophy.

%%%%%%%%%%%%%%%%%%%%%%%%%%%5

\subsection{Local and global fields}
By \cite[Ch.~II, \S~5, Thm.~4]{serre:galc} the maximal pro-$p$
quotients of absolute Galois groups of local fields containing a primitive $\pth$ root of unity are either Demu\v{s}kin pro-$p$ groups or trivial --- for a comprehensive survey on the structure of Demu\v{s}kin groups we direct the reader to \cite[Ch.~III, \S~9]{NSW2008}.
By Corollary~\ref{cor:trivial R}, every Demu\v{s}kin group satisfies the $p$-Kijima--Nishi property, as shown by the following.

\begin{prop}\label{prop:Demushkin KN}
 Let the pro-$p$ group $G$ be a Demu\v{s}kin group.
 Then $G$ satisfies the $p$-Kijima--Nishi property.
 In particular, if $\KK$ is a local field containing a primitive $p^{\mathrm th}$ root of unity, then $G_{\KK}$ satisfies the $p$-Kijima--Nishi property.
\end{prop}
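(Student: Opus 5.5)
The plan is to show that $\calR_p(G)=0$ and then conclude with Corollary~\ref{cor:trivial R}. The key input is the defining property of a Demu\v{s}kin group $G$ (cf. \cite[Ch.~III, \S~9]{NSW2008}). Such a group is a finitely generated pro-$p$ group with $\dim_{\FF_p}\rmH^2(G,\FF_p)=1$, and the cup product
\[
\rmH^1(G,\FF_p)\times\rmH^1(G,\FF_p)\longrightarrow\rmH^2(G,\FF_p)\cong\FF_p
\]
is a non-degenerate bilinear form. Non-degeneracy means precisely that the only $\alpha\in\rmH^1(G,\FF_p)$ with $\alpha\smallsmile\beta=0$ for all $\beta$ is $\alpha=0$. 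By Definition~\ref{defn:Fp cup radical}, this is the statement $\calR_p(G)=0$. Corollary~\ref{cor:trivial R} then yields the $p$-Kijima--Nishi property for $G$.

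One degenerate case needs attention. Under some conventions the group $C_2$ counts as a Demu\v{s}kin group for $p=2$; it arises as $G_{\RR}(2)$, the maximal pro-$2$ quotient for $\RR$. For $C_2$ the cup pairing is also non-degenerate, since $\alpha\smallsmile\alpha\neq0$, so the same argument applies. This agrees with Example~\ref{ex:cyclic groups}, which gives $\calR_2(C_2)=0$. No other adjustment is needed.

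For the statement about fields, let $\KK$ be a local field containing a primitive $\pth$-root of unity. By \cite[Ch.~II, \S~5, Thm.~4]{serre:galc}, quoted above, $G_\KK(p)$ is either a Demu\v{s}kin group or trivial. In the trivial case there are no open subgroups of index $p$, so the $p$-Kijima--Nishi property holds vacuously. Proposition~\ref{prop:passing to the pro-p quotient} transfers the property from $G_\KK(p)$ to $G_\KK$.

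There is no substantial obstacle here. The only point to verify is that the non-degeneracy of the cup product pairing is part of the definition, or a standard characterization, of Demu\v{s}kin groups. If the paper's convention also admits the degenerate case $C_2$, that case is already settled above.
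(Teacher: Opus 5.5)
Your proposal is correct and matches the paper's proof: the non-degeneracy of the cup product pairing $\rmH^1(G,\FF_p)\times\rmH^1(G,\FF_p)\to\rmH^2(G,\FF_p)\simeq\FF_p$, which is part of the definition of a Demu\v{s}kin group, gives $\calR_p(G)=0$, and Corollary~\ref{cor:trivial R} then yields the $p$-Kijima--Nishi property. For the field case, you explicitly cite Serre's classification of $G_\KK(p)$, treat the trivial case vacuously, and transfer the property via Proposition~\ref{prop:passing to the pro-p quotient}; this only spells out what the paper leaves implicit in its ``in particular''.
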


\begin{proof}
 By definition of Demu\v{s}kin groups, the cup product induces a non-degenerate pairing
 \[
  \rmH^1(G,\F_p)\times\rmH^1(G,\F_p)\longrightarrow\rmH^2(G,\F_p)\simeq\FF_p
 \]
(cf., e.g., \cite[Def.~3.9.9]{NSW2008}).
Hence $\calR_p(G)=0$, so that $G$ satisfies the $p$-Kijima--Nishi property by Corollary~\ref{cor:trivial R}.
\end{proof}
Before moving to global fields, we need the following lemma, which is a simple combination of Krasner's lemma (\cite[Lem. 12.1.1]{NSW2008}) applied to the polynomials $X^p-a$ together with Kummer duality. It is well known to experts.
\begin{lem}
\label{lem:Krasner cohomology}
    Let $(\KK,v)$ be a discretely valued field containing a primitive $p^{\rm th}$ root of unity and $\KK_v$ its completion. Then, the map $\rmH^1(\KK,\FF_p)\to \rmH^1(\KK_v,\FF_p)$ induced by the choice of an inclusion  $\KK^{\rm sep}\hookrightarrow \KK^{\rm sep}_{v}$ is surjective.
\end{lem}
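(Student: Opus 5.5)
Under the Kummer isomorphism $\rmH^1(\KK,\FF_p)\cong\KK^\times/\KK^{\times p}$ (and likewise for $\KK_v$, which also contains a primitive $\pth$ root of unity), the map in question is the one induced by the inclusion $\KK^\times\hookrightarrow\KK_v^\times$. The lemma is therefore equivalent to the surjectivity of $\KK^\times\to\KK_v^\times/\KK_v^{\times p}$, i.e.\ to $\KK_v^\times=\KK^\times\cdot\KK_v^{\times p}$. To see this, note that the inclusion $\KK^{\rm sep}\hookrightarrow\KK_v^{\rm sep}$ induces the restriction $G_{\KK_v}\to G_\KK$, and the Kummer map is natural with respect to field extensions. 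So the diagram with the Kummer maps horizontally and $\KK^\times\to\KK_v^\times$, $\res$ vertically commutes. Since $\kappa$ is surjective on both sides, it suffices to prove the density-type statement $\KK_v^\times=\KK^\times\KK_v^{\times p}$.

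Given $b\in\KK_v^\times$, I would use the density of $\KK$ in $\KK_v$ to choose $a\in\KK^\times$ with $|a-b|_v$ very small, and then show that $b/a$ is a $\pth$ power in $\KK_v$. For this I apply Krasner's lemma to $f(X)=X^p-b$, whose roots $\beta\zeta^i$ are distinct since $\mathrm{char}\,\KK\neq p$ (which holds because $\KK$ contains a primitive $\pth$ root of unity). Let $g(X)=X^p-a$, with a root $\alpha$ in $\overline{\KK_v}$. The roots of $g$ are continuous in the coefficients, so choosing $a$ close enough to $b$ gives
\[
|\alpha-\beta|<\min_{i\neq0}|\beta-\beta\zeta^i|
\]
for some root $\beta$ of $f$. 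Krasner's lemma then gives $\KK_v(\beta)\subseteq\KK_v(\alpha)$, and by symmetry (for $a$ close enough) equality holds.

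This equality is not yet enough, so I would argue more directly: I would show $\KK_v(\sqrt[p]{b/a})=\KK_v$. Put $c=b/a$; then $|c-1|$ can be made arbitrarily small. Apply Krasner to $X^p-c$ against the root $1$ of $X^p-1$, whose roots $\zeta^i$ lie in $\KK_v$. For $|c-1|$ small, some root $\gamma$ of $X^p-c$ satisfies $|\gamma-1|<\min_{i\neq0}|1-\zeta^i|$, since $\prod_i|\gamma-\zeta^i|=|c-1|$. Krasner's lemma then yields $\gamma\in\KK_v(1)=\KK_v$, so $c\in\KK_v^{\times p}$ and $b=a\cdot\gamma^p$.

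The main obstacle is the quantitative choice ensuring that some root of $X^p-c$ lies closer to $1$ than the other $\pth$ roots of unity do. This is handled by the product identity above: some factor has $|\gamma-\zeta^i|\le|c-1|^{1/p}$, and after relabeling $\gamma$ by a root of unity we may take $i=0$. The naturality of the Kummer map with respect to the chosen embedding is routine.
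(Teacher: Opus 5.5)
Your argument is correct and is essentially the one the paper intends. The paper gives no written proof, only the sketch ``Krasner's lemma applied to the polynomials $X^p-a$ together with Kummer duality.'' Your reduction to $\KK_v^\times=\KK^\times\KK_v^{\times p}$, followed by Krasner's lemma for $X^p-c$ with $c$ close to $1$, fills in that sketch. One step is implicit: $|\gamma|=1$, which is why the conjugates $\gamma\zeta^i$ of $\gamma$ lie at distance $|1-\zeta^i|$ from it. Also, your abandoned first attempt was already enough. By Kummer theory, $\KK_v(\sqrt[p]{b})=\KK_v(\sqrt[p]{a})$ gives $b\in a^j\KK_v^{\times p}$ for some $j$, and $a^j\in\KK^\times$.
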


The next theorem shows that the $p$-radicals of global fields are trivial. By global fields we mean finite extensions of $\QQ$ or $\FF_p(t)$. Note that for infinite extensions of $\QQ$ or $\FF_p(t)$ the statement is not true anymore, as one has $\Br(\QQ^{\rm ab})=0$ by \cite[\S 3.3 Prop. 9]{serre:galc} or $\Br(\bar{\FF_p}(t))=0$ by Tsen's theorem (cf. \cite[Thm. 6.2.8]{GilleSzamuely}).

\begin{thm}
\label{thm:Gloabal field trivial radical}
    Let $\KK$ be a global field containing a primitive $p^{\rm th}$-root of unity. Then $\calR_p(G_{\KK})=0$ and, in particular, $\KK$ has the $p$-Kijima--Nishi property.
\end{thm}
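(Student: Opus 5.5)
Let $\alpha\in\calR_p(G_\KK)$ be nonzero; we aim for a contradiction by finding $\beta$ with $\alpha\smallsmile\beta\neq0$. The natural tool is the local--global description of ${}_p\Br(\KK)$: the map $\rmH^2(\KK,\FF_p)\cong{}_p\Br(\KK)\to\bigoplus_v {}_p\Br(\KK_v)$ is injective (Albert--Brauer--Hasse--Noether), so it suffices to find one place $v$ at which the localized cup product is nonzero. The localization maps $\rmH^i(\KK,\FF_p)\to\rmH^i(\KK_v,\FF_p)$ are restrictions along $G_{\KK_v}\hookrightarrow G_\KK$, so they commute with cup products.

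First step: choose a finite place $v$ where $\alpha_v:=\res(\alpha)\in\rmH^1(\KK_v,\FF_p)$ is nonzero. Write $\alpha=\kappa(a)$ with $a\notin\KK^{\times p}$. By Chebotarev applied to the cyclic degree-$p$ extension $\KK(\sqrt[p]{a})/\KK$, infinitely many places are inert (non-split) in it; any such nonarchimedean $v$ has $a\notin\KK_v^{\times p}$, i.e. $\alpha_v\neq0$. (Alternatively one can cite the Grunwald--Wang-type fact that an element which is a $p$th power locally almost everywhere is a global $p$th power; for prime exponent $p$ there is no Wang counterexample.) We also take $v$ finite so that $\KK_v$ is a local field containing $\mu_p$.

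Second step: since $\KK_v$ is a local field containing a primitive $p$th root of unity, local Tate duality (equivalently, the nondegeneracy used in Proposition~\ref{prop:Demushkin KN}, or the trivial case where the pairing is still nondegenerate on $\rmH^1(\KK_v,\FF_p)\cong\KK_v^\times/\KK_v^{\times p}$) gives $\beta_v\in\rmH^1(\KK_v,\FF_p)$ with $\alpha_v\smallsmile\beta_v\neq0$ in $\rmH^2(\KK_v,\FF_p)\cong\FF_p$. By Lemma~\ref{lem:Krasner cohomology} (applied to $\KK$ with the discrete valuation $v$; the completion is $\KK_v$), $\beta_v$ lifts to some $\beta\in\rmH^1(\KK,\FF_p)$. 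Then $\res_v(\alpha\smallsmile\beta)=\alpha_v\smallsmile\beta_v\neq0$, so $\alpha\smallsmile\beta\neq0$, contradicting $\alpha\in\calR_p(G_\KK)$. Hence $\calR_p(G_\KK)=0$, and Corollary~\ref{cor:trivial R} yields the $p$-Kijima--Nishi property.

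The main obstacle is the first step, guaranteeing a finite place where $\alpha$ stays nontrivial; I would handle it via Chebotarev density for the Kummer extension $\KK(\sqrt[p]{a})$, which applies uniformly to number fields and global function fields (with $p\neq\mathrm{char}\,\KK$ automatic since $\mu_p\subset\KK$). Note that actually we do not even need the Hasse principle for the Brauer group: nonvanishing of one localization already forces nonvanishing globally, so the argument only uses functoriality of cup products, local duality, and Lemma~\ref{lem:Krasner cohomology}.
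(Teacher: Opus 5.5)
Your proof is correct and is essentially the paper's argument run contrapositively. The paper uses Lemma~\ref{lem:Krasner cohomology} to put every local class $\alpha_v$ into $\calR_p(G_{\KK_v})=0$ (Proposition~\ref{prop:Demushkin KN}) and then concludes $a\in\KK^{\times p}$ by Grunwald--Wang; you use the same two local ingredients at a single place where $\alpha_v\neq0$, and you find that place by Chebotarev for $\KK(\sqrt[p]{a})/\KK$, which is exactly the elementary case of the local--global principle for $p$th powers needed here, instead of citing Grunwald--Wang.
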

\begin{proof}
    Let $\alpha=\kappa(a)\in \calR_p(G_{\KK})$. For any non-archimedean place $v$ of $\KK$ we have, by Lemma~\ref{lem:Krasner cohomology}, that the associated local class $\alpha_v\in \rmH^1(\KK_v,\FF_p)$ satisfies
    \begin{align*}
        \alpha_v\smallsmile \beta =0 \qquad \text{for all}\quad \beta\in \rmH^1(\KK_v,\FF_p).
    \end{align*}
    Therefore, for all $v$ one has $\alpha_v\in \calR_p(G_{{\KK}_v})$, which is trivial by Proposition~\ref{prop:Demushkin KN}. Thus, for all $v$ and for $a_v\in\KK_v^\times$ such that $\kappa(a_v)=\alpha_v$, one has $a_v\in\KK_v^{\times p}$. By the Grunwald--Wang Theorem (see for example \cite[Thm.~IX.1]{ArtinTate2009}) it follows that $a\in \KK^{\times p}$, thus $\alpha=0$ in $\rmH^1(\KK,\FF_p)$.
\end{proof}
\subsection{Discretely valued fields and rational function fields}
Next, we focus on rational function fields. It would be interesting to see if the approach is generalizable for curves of higher genus.
The classical Kaplansky radical was studied for function fields of arithmetic curves by G. Manzano Flores in his PhD thesis \cite{Gonzalo2022}, where he related it to certain reduction properties of the curves.

\begin{prop}
\label{prop:Radical of discretely valued field}
    Let $(\KK,v)$ be a $p$-henselian field with residue field $\mathbb{k}$. Assume that $\mathbb{k}$ contains a primitive $p^{\mathrm{th}}$-root of unity (in particular, the characteristic of $\mathbb{k}$ is different from $p$). Then $\KK$ has the $p$-Kijima--Nishi property. If $\mathbb{k}$ is not $p$-closed, that is $G_{\mathbb{k}}(p)\neq 1$, then $\calR_p(G_\KK)=0$.\
\end{prop}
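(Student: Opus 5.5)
The plan is to reduce to a semidirect product description of the maximal pro-$p$ Galois group and compute the radical directly. Since $(\KK,v)$ is $p$-henselian and the residue characteristic differs from $p$, the extension $\KK(p)/\KK$ is tamely ramified. The standard structure theory of tamely ramified extensions (e.g.\ Efrat's or Engler--Koenigsmann's description) then gives
\[
G_\KK(p)\cong \ZZ_p^{\,r}\rtimes G_{\mathbbm{k}}(p),
\]
with $r=\dim_{\FF_p}\Gamma_v/p\Gamma_v$ (possibly infinite). Because $\mathbbm{k}$ contains a primitive $\pth$-root of unity, the action of $G_{\mathbbm{k}}(p)$ on the inertia $\ZZ_p^{\,r}$ is via the cyclotomic character, which is trivial modulo $p$. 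By Proposition~\ref{prop:passing to the pro-p quotient} and Remark~\ref{rem: radical reduction to pro-p case}, it suffices to work with this pro-$p$ group. Cohomologically, $\rmH^1(\KK,\FF_p)\cong \rmH^1(\mathbbm{k},\FF_p)\oplus \Hom(\Gamma_v/p,\FF_p)$, which is the usual splitting of $\KK^\times/\KK^{\times p}$ into units and value-group part. Cup products are then given by the tame symbol: unit-with-unit products come from $\rmH^2(\mathbbm{k})$ via inflation, unit-with-uniformizer products give $\chi$ in the residue part $\rmH^1(\mathbbm{k},\FF_p)$, and uniformizer-with-uniformizer products are $\pi_i\smallsmile\pi_j\in\Lambda^2$ of the inertia part. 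The injectivity needed here comes from the Hochschild--Serre sequence, which for these semidirect products with cyclotomic action is split (compare the proof of Proposition~\ref{prop:free semidirect finite}).

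The radical is computed next. Take $\alpha=\alpha_0+\sum_i c_i\pi_i^\ast\in\calR_p(G_\KK)$ with $\alpha_0\in\rmH^1(\mathbbm{k})$. Pairing $\alpha$ with $\pi_j^\ast$ yields the residue component $\alpha_0\in\rmH^1(\mathbbm{k})$, which must vanish; hence $\alpha_0=0$. Pairing $\alpha$ with the units $\beta\in\rmH^1(\mathbbm{k})$ yields $\sum_i c_i\,\beta\otimes\pi_i$, so all $c_i=0$ as soon as $\rmH^1(\mathbbm{k},\FF_p)\ne0$. This proves that $\calR_p(G_\KK)=0$ whenever $G_{\mathbbm{k}}(p)\ne1$, and in that case the $p$-Kijima--Nishi property follows from Corollary~\ref{cor:trivial R}. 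One caveat concerns the case $p=2$, where $\pi\smallsmile\pi=\pi\smallsmile(-1)$. This term has no residue component affecting the argument, because $-1$ is a unit and the residue terms are handled as above. Still, I will check carefully that $\pi_j\smallsmile\pi_j$ contributes nothing to the residue part that could cancel $\alpha_0$.

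The remaining case, $G_{\mathbbm{k}}(p)=1$, is where I expect the main obstacle. Here $G_\KK(p)\cong\ZZ_p^{\,r}$ is free abelian pro-$p$. If $r\ge2$ is finite, it is powerful, so Example~\ref{ex:powerful groups} gives $\calR_p=0$. For $r=1$, it is $\ZZ_p$, which has the property by Example~\ref{ex:cyclic groups}. For $r=0$ the group is trivial and the property holds vacuously. For infinite $r$ I would argue directly: the cup product $\Lambda^2\rmH^1\to\rmH^2$ is injective for any free abelian pro-$p$ group, since it is a filtered limit of finite-rank ones, so again $\calR_p=0$. The delicate points are therefore the justification of the structure theorem with possibly infinite rank and the injectivity of the tame-symbol decomposition of $\rmH^2$. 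For both I would cite the standard $p$-henselian results rather than reprove them.
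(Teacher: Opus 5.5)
Your route differs from the paper's and can be made to work. As written, however, your $p=2$ step is wrong, and your non-$p$-closed argument is missing the hypothesis $r\ge 1$.

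\emph{The $p=2$ step.} Your dismissal of the caveat is backwards. For $p=2$ one has $\kappa(\pi_j)\smallsmile\kappa(\pi_j)=\kappa(\pi_j)\smallsmile\kappa(-1)$. Precisely because $-1$ is a unit, the residue of this class is the class of $-1$ in $\mathbb{k}^\times/\mathbb{k}^{\times 2}$, which is nonzero whenever $\sqrt{-1}\notin\mathbb{k}$. Hence pairing $\alpha=\alpha_0+\sum_i c_i\pi_i^\ast$ with $\pi_j^\ast$ gives residue component $\alpha_0+c_j\kappa_{\mathbb{k}}(-1)$, not $\alpha_0$, and its vanishing does not force $\alpha_0=0$.

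Concretely, take $p=2$ and $\KK=\QQ_3$ (or $\FF_3(\!(t)\!)$). The class $\alpha=\kappa(-\pi)$ has $\alpha_0=\kappa(-1)\neq 0$, yet $\alpha\smallsmile\kappa(\pi)=0$, because $\kappa(-\pi)\smallsmile\kappa(\pi)$ always vanishes.

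The repair is to reverse your two steps. Pairing with $\beta\in\rmH^1(\mathbb{k},\FF_p)$ involves no $\pi\smallsmile\pi$ terms, so its residue part is $\pm\sum_i c_i\,\beta\otimes\pi_i$, and this forces all $c_i=0$. Only after that does pairing with some $\pi_j^\ast$ isolate $\alpha_0$. This is exactly the case distinction in the proof of Proposition~\ref{prop:almost direct product}. In fact, since the cyclotomic action is trivial mod $p$, your group $\ZZ_p^{r}\rtimes G_{\mathbb{k}}(p)$ falls under Corollary~\ref{cor:KNsemidirect}, which you could simply cite instead of redoing the tame-symbol bookkeeping.

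\emph{The hypothesis $r\ge 1$.} Both steps of your non-$p$-closed argument need at least one $\pi_j$, i.e.\ $v(\KK^\times)\neq p\,v(\KK^\times)$. If the value group is $p$-divisible (the trivial valuation, or $\mathbb{k}(\!(t^{\QQ})\!)$), then $G_\KK(p)\cong G_{\mathbb{k}}(p)$. Taking $\mathbb{k}$ as in Theorem~\ref{thm:Fields without p-Kijima} then shows that both conclusions of the statement fail. So the proposition must be read for discrete valuations, as its label and the paper's proof indicate (the proof asserts $G_\KK(p)\cong\ZZ_p$ when $\mathbb{k}$ is $p$-closed). Likewise, your claim that $\calR_p(G_\KK)=0$ whenever $G_{\mathbb{k}}(p)\neq 1$ needs $r\ge1$.

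\emph{Comparison with the paper.} In the $p$-closed case the paper only notes $G_\KK(p)\cong\ZZ_p$ and applies Example~\ref{ex:KNFree}. In the other case it cites \cite[Thm.~3.6]{Wadsworth}, Wadsworth's description of the mod-$p$ cohomology of $p$-henselian fields. You replace that citation by the decomposition $G_\KK(p)\cong\ZZ_p^{r}\rtimes G_{\mathbb{k}}(p)$ together with Hochschild--Serre. Once corrected, this route is more self-contained, since it reduces to results proved in the paper. It also covers arbitrary value groups with $r\ge1$, and your treatment of $\ZZ_p^{r}$ in the $p$-closed case, including infinite $r$, is fine. The price is relying on the splitting theorem for $G_\KK(p)$ rather than on Wadsworth's computation.
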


\begin{proof}
    First we notice that if $G_\mathbb{k}(p)=1$, then $G_\KK(p)\cong \ZZ_p$, since there are no unramified extensions and every $p$-extension is tamely ramified. Then the statement follows from Example~\ref{ex:KNFree}. In the other case, this is an immediate consequence of \cite[Thm. 3.6]{Wadsworth}.
\end{proof}

\begin{thm}
\label{thm: rational function field trivial radical}
    Let $p$ be a prime number, $\mathbb{k}$ a field containing a primitive $p^{\mathrm{th}}$-root of unity and set $\KK\coloneq \mathbb{k}(t)$ to be the rational function field in one variable over $\mathbb{k}$. Then $\KK$ has the $p$-Kijima--Nishi property, and, if $G_{\mathbb{k}}(p)$ is infinite, then $\calR_p(G_\KK)$ is trivial.
\end{thm}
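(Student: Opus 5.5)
The plan is to use the completions of $\KK=\mathbb{k}(t)$ at its discrete valuations together with the residue (Faddeev) exact sequence. These reduce the statement to Proposition~\ref{prop:Radical of discretely valued field} in the first case and to Proposition~\ref{prop:free semidirect finite} in the degenerate case.

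\emph{First case: $G_{\mathbb{k}}(p)$ infinite.} Let $\alpha=\kappa(f)\in\calR_p(G_\KK)$ with $f\in\KK^\times$.
\begin{itemize}
\item For every place $v$ of $\KK$ trivial on $\mathbb{k}$, i.e.\ a monic irreducible $\pi\in\mathbb{k}[t]$ or $v=\infty$, consider the completion $\KK_v=\mathbb{k}_v(\!(\pi)\!)$. Its residue field $\mathbb{k}_v$ is a finite extension of $\mathbb{k}$ containing $\mu_p$.
\item By Lemma~\ref{lem:Krasner cohomology}, restriction $\rmH^1(\KK,\FF_p)\to\rmH^1(\KK_v,\FF_p)$ is surjective. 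Exactly as in the proof of Theorem~\ref{thm:Gloabal field trivial radical}, the image $\alpha_v$ therefore lies in $\calR_p(G_{\KK_v})$.
\item Since $G_{\mathbb{k}}(p)$ is infinite, $G_{\mathbb{k}_v}(p)$ is nontrivial, being open in $G_{\mathbb{k}}(p)$. Proposition~\ref{prop:Radical of discretely valued field} then gives $\calR_p(G_{\KK_v})=0$, so $f\in\KK_v^{\times p}$ for every $v$.
\item Taking $v=\pi$ shows that $v_\pi(f)\equiv 0\pmod p$ for every $\pi$. Hence $f=c\,g^p$ with $c\in\mathbb{k}^\times$.
\item Completing at $v=t$ (after translating so that $t$ is a place of degree one), we get $c\in\mathbb{k}(\!(t)\!)^{\times p}$. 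Reducing modulo $t$ gives $c\in\mathbb{k}^{\times p}$, so $\alpha=0$.
\end{itemize}
No Grunwald--Wang type argument is needed here, because divisibility of valuations already yields a $p$-th power up to a constant.

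\emph{Second case: $G_{\mathbb{k}}(p)$ finite.} By Artin--Schreier, this forces either $G_{\mathbb{k}}(p)=1$, or $p=2$ with $G_{\mathbb{k}}(2)\cong C_2$ (the real closed type, which does not contain $\mu_2$ issues since $p=2$). The argument is the same as above, but now at places $v$ with $\mathbb{k}_v$ being $p$-closed one only gets $G_{\KK_v}(p)\cong\ZZ_p$, whose radical is all of $\rmH^1$. So one must prove the Kijima--Nishi property directly.
\begin{itemize}
\item By Proposition~\ref{prop:passing to the pro-p quotient} it suffices to treat $G_\KK(p)$, and one may replace $\mathbb{k}$ by the relevant $p$-closure.
\item If $\mathbb{k}$ is $p$-closed, the Faddeev sequence gives $\rmH^2(\KK,\FF_p)=0$ (Tsen-type vanishing in the $p$-part), so $\mathrm{cd}_p\le 1$. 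Then Example~\ref{ex:KNFree} applies.
\item If $p=2$ and $G_{\mathbb{k}}(2)=C_2$, write $\mathbb{k}'=\mathbb{k}(\sqrt{-1})$. Then $G_{\mathbb{k}(t)}(2)$ has the open subgroup $G_{\mathbb{k}'(t)}(2)$, which has $2$-cohomological dimension $1$, with quotient $C_2$. One realizes $G_{\mathbb{k}(t)}(2)$ as a semidirect product $F\rtimes C_2$, which holds since $\mathrm{cd}_2 F\le 1$ and the extension by $C_2$ splits via complex conjugation.
\item Here $\rmH^1(F,\FF_2)\cong\mathbb{k}'(t)^\times/\mathbb{k}'(t)^{\times2}$ is a permutation module on the monic irreducibles of $\mathbb{k}'[t]$ together with a constant. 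Conjugation permutes these, fixing the real-type primes and swapping conjugate pairs.
\end{itemize}
Proposition~\ref{prop:free semidirect finite} then applies with $k=1$, which is allowed since $p=2$.

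The main obstacle is this last step. The splitting and the permutation-module identification have to hold on the nose, and one has to handle the constants and the place at $\infty$ correctly.
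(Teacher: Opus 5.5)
\textbf{First case ($G_{\mathbb{k}}(p)$ infinite).} This part is correct, but it finishes differently from the paper.

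\begin{itemize}
\item The start is the same: both arguments use Lemma~\ref{lem:Krasner cohomology} and Proposition~\ref{prop:Radical of discretely valued field} to show that $\alpha$ dies in every completion $\KK_v$.
\item The paper then argues geometrically. The associated degree-$p$ extension is unramified everywhere, so it comes from an \'etale cover of $\mathrm{Spec}(\mathbb{k})$, i.e.\ it is constant, and a degree-one place detects any nontrivial constant extension.
\item You instead use unique factorization in $\mathbb{k}[t]$ to get $f=cg^p$, and then read off $c\in\mathbb{k}^{\times p}$ inside $\mathbb{k}(\!(t)\!)$. This is a more elementary, self-contained route to the same conclusion.
\end{itemize}

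One phrase needs fixing: $G_{\mathbb{k}_v}(p)$ is not a subgroup of $G_{\mathbb{k}}(p)$. What is true is that the image of $G_{\mathbb{k}_v}$ in $G_{\mathbb{k}}(p)$ is open, hence infinite, and it is a pro-$p$ quotient of $G_{\mathbb{k}_v}$.

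\textbf{Second case ($G_{\mathbb{k}}(p)$ finite).} There is a genuine gap here. It is not where you locate it (splitting and permutation module); it is in the claims that the cohomological dimension is at most $1$.

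If $\rmH^1(\mathbb{k},\FF_p)=\rmH^2(\mathbb{k},\FF_p)=0$, the Faddeev sequence gives $\rmH^2(\KK,\FF_p)\cong\bigoplus_v\rmH^1(\mathbb{k}_v,\FF_p)$. This vanishes only if every finite extension $\mathbb{k}_v$ of $\mathbb{k}$ is again $p$-closed, and $G_{\mathbb{k}}(p)=1$ does not imply that. Here is a counterexample:
\begin{itemize}
\item Take $p=2$ and $\mathbb{k}=\QQ(2)$, the quadratic closure of $\QQ$.
\item An $A_4$-extension $M/\QQ$ is linearly disjoint from $\mathbb{k}$, since $A_4$ has no nontrivial $2$-group quotient.
\item Hence $\mathbb{k}M^{V_4}$ is a cubic extension of $\mathbb{k}$ that admits a $V_4$-extension, so it is not $2$-closed.
\item For $v$ the place given by its defining cubic, $\rmH^2(\mathbb{k}(t),\FF_2)\neq 0$, so $G_{\KK}(2)$ is not free.
\end{itemize}
The ``Tsen-type vanishing'' you invoke needs $\mathrm{cd}_p(G_{\mathbb{k}})=0$, i.e.\ $p\nmid|G_{\mathbb{k}}|$, not merely $G_{\mathbb{k}}(p)=1$. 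The paper's own first case asserts the same vanishing of $\rmH^i(\mathbb{k}_v,\FF_p)$. So in this sub-case you reproduce the paper's argument together with this unjustified step, and an additional argument would be needed there.

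\textbf{The $C_2$ sub-case.} The same problem breaks your argument here. $G_{\mathbb{k}}(2)\cong C_2$ only says that $\mathbb{k}$ is Euclidean, not real closed. For example, for the field of real constructible numbers one has $\mathbb{k}(\sqrt{-1})=\QQ(2)$. By the example above, $G_{\mathbb{k}(\sqrt{-1})(t)}(2)$ is then not free, so it does not have $2$-cohomological dimension $1$.

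The paper avoids this by never passing through $\mathbb{k}(\sqrt{-1})$:
\begin{itemize}
\item It splits $1\to G_{\bar{\mathbb{k}}(t)}\to G_{\KK}\to G_{\mathbb{k}}\to 1$ by van den Dries--Ribenboim.
\item It uses that $G_{\bar{\mathbb{k}}(t)}$ has cohomological dimension at most $1$ by Tsen's theorem, and that its $\rmH^1$ is a permutation module on $\bar{\mathbb{k}}$-points.
\item It then applies Proposition~\ref{prop:free semidirect finite} with $H=G_{\mathbb{k}}$. This is why that proposition only assumes $H(p)\cong C_p^k$.
\end{itemize}
The trade-off is this. Your route would need only the pro-$p$ case of that proposition, which is the only case whose proof is written out. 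The paper's route works for every Euclidean $\mathbb{k}$ but relies on the non-pro-$p$ version. For real closed $\mathbb{k}$, where $\mathbb{k}(\sqrt{-1})=\bar{\mathbb{k}}$, your argument goes through and agrees with the paper's.
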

\begin{proof}
    We recall that the \emph{Faddeev sequence} (see for example \cite[Thm. 6.9.1 resp. Cor. 6.9.3]{GilleSzamuely}) yields for all $i\geq 1$
    \begin{equation*}
    0\to \rmH^i(\mathbb{k},\FF_p)\to \rmH^i(\KK,\FF_p)\overset{\sum_v\partial_v^i}\longrightarrow \bigoplus_v\rmH^{i-1}(\mathbb{k}_v,\FF_p)\to \rmH^{i-1}(\mathbb{k},\FF_p).
    \end{equation*}

    Here, $v$ ranges over all discrete valuations on $\mathbb{k}(t)$ which are trivial on $\mathbb{k}$. Thus, they are given by non-zero prime ideals of $\mathbb{k}[t]$ or by the infinite valuation corresponding to $t^{-1}$ and $\mathbb{k}_v$ is the finite extension of $\KK$ given by the unique monic generator of the ideal of $\mathbb{k}[t]$ corresponding to $v$ or $v=\infty$ and its associated residue field is $\mathbb{k}$. The maps $\partial^i_v$ are the so called \textit{residue maps}, which are described in \cite[Const. 6.8.5]{GilleSzamuely} and factor through $\rmH^i(\KK_v,\FF_p)$, where $\KK_v$ denotes the completion of $\KK$ with respect to the valuation $v$. The last map is the sum of the corestriction maps $\rmH^{i-1}(\mathbb{k}_v,\FF_p)\to \rmH^{i-1}(\mathbb{k},\FF_p)$. Note that in contrast to the situation described in \cite[\S 6.9]{GilleSzamuely} we do not have to take into account Tate twists, as all fields under consideration contain a primitive $p^{\mathrm{th}}$-root of unity.

    We now distinguish three cases:
    \begin{itemize}[leftmargin=.5cm]
        \item \textbf{$G_\mathbb{k}(p)$ is trivial:} Then $\rmH^i(\mathbb{k},\FF_p)=0$ for $i\geq 1$ and the same holds for $\rmH^i(\mathbb{k}_v,\FF_p)$. Thus using the Faddeev sequence, we see that $G_\KK$ has $p$-cohomological dimension $1$ and we get the $p$-Kijima--Nishi property by Example~\ref{ex:KNFree}. 
        \item \textbf{$G_\mathbb{k}(p)\neq 1$ is finite:} In this case $p=2$ and $G_{\mathbb{k}}(2)\cong C_2$ by the pro-$p$ version of Artin-Schreier theorem (cf. \cite{Becker:ASp}) and furthermore, the characteristic of $\mathbb{k}$ is $0$. Now by a theorem of L. van den Dries and P. Ribenboim \cite{DriesRibenboim1984} it follows that the sequence
        \begin{align*}
            1\longrightarrow G_{\bar{\mathbb{k}}(t)}\longrightarrow G_{\KK}\longrightarrow G_{\mathbb{k}}\longrightarrow 1
        \end{align*}
        is split. We need some basic facts from étale cohomology, for which we refer to \cite{Milne1980}. We identify $G_{\bar{\mathbb{k}}(t)}=\varprojlim \pi_1^{\rm \acute{e}t}(\mathbb{P}^1_{\bar{\mathbb{k}}}\smallsetminus S,\bar\eta)$
        where $S$ ranges over the finite subsets of closed points of $\mathbb{P}^1_{\bar{\mathbb{k}}}$, which are stable under the $G_{\mathbb{k}}$ action and $\bar\eta$ is a fixed algebraic closure of the generic point. Then we get that
        \begin{align*}
            \rmH^1(\bar{\mathbb{k}} (t),\FF_p)={\varinjlim}_{S} \rmH^1_{\rm \acute{e}t}(\mathbb{P}^1_{\bar{\mathbb{k}}}\smallsetminus S,\FF_p)={\varinjlim}_{S} \FF_p[S],
        \end{align*}
        from which one can deduce that $\rmH^1(\bar{\mathbb{k}}(t),\FF_p)$ is the permutation module generated by the discrete $G_{\mathbb{k}}$-set $\mathbb{P}^1(\bar{\mathbb{k}})$. The Faddeev sequence implies that $G_{\bar{\mathbb{k}}(t)}$ has cohomological dimension $\leq 1$. Hence, Proposition~\ref{prop:free semidirect finite} is applicable and we get the $p$-Kijima--Nishi property of $\KK$.
        \item \textbf{$G_\mathbb{k}(p)$ is infinite:} In this case we have $G_{\mathbb{k}_v}(p)\neq 1$ for all places $v$ of $\KK$ and thus the corresponding completion $\KK_v$ has trivial $\FF_p$-cup  radical by Proposition~\ref{prop:Radical of discretely valued field}. 
    
        Let now $\alpha\in \calR_p(G_{\KK})$. Then for any discrete valuation, we have $\alpha_v\!\smallsmile\!\beta=0$ for every $\beta \in \rmH^1(\KK_v,\FF_p)$, by Lemma~\ref{lem:Krasner cohomology}, and thus $\alpha_v\in \mathcal{R}_p(\KK_v)$, which is trivial by Proposition~\ref{prop:Radical of discretely valued field}. { Therefore, $\alpha$ is contained in the kernel of the localization map
        \begin{align*}
            \rmH^1(\KK,\FF_p)\to {\bigoplus}_{v}\rmH^1(\KK_v,\FF_p).
        \end{align*}
        Assume that $\alpha\neq 0$ and let $\LL$ be the field corresponding to $\ker(\alpha)$, then the completion $\LL_v$ coincides with $\KK_v$ for all $v$. In particular $\LL/\KK$ is unramified at every $v$. Since every étale cover of $\mathbb{P}^1_{\mathbb{k}}$ comes from an étale cover of $\mathrm{Spec}(\mathbb{k})$ we have $\LL=\mathbb{f}\KK$ for some $p$-extension $\mathbb{f}/\mathbb{k}$ and hence for every $v$ with residue field $\mathbb{k}_v=\mathbb{k}$ we have $\LL_v=\mathbb{f}\KK_v\neq \KK_v$. This yields $\alpha=0$ and the desired statement.
        } 
    \end{itemize}
\end{proof}

Now by applying the statement of Theorem~\ref{thm: rational function field trivial radical} inductively, one deduces the following corollary:
\begin{cor}
\label{cor: function field in n variables has trivial radical}
    Let $\mathbb{k}$ be a field containing a primitive $p^{\mathrm{th}}$-root of unity. If $\KK/\mathbb{k}$ is a purely transcendental extension (not necessarily of finite transcendence degree), then $\KK$ has the $p$-Kijima--Nishi property. We have $\calR_p(G_{\KK})=0$ if the transcendence degree is at least $2$ or $G_{\mathbb{k}}(p)$ is infinite.
\end{cor}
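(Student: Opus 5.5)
The plan is to induct on the number of variables for finite transcendence degree, and then pass to a direct limit for the general case.

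\emph{Finite transcendence degree.} Write $\KK=\mathbb{k}(t_1,\dots,t_n)$ and set $\mathbb{k}_i\coloneq\mathbb{k}(t_1,\dots,t_i)$, so that $\mathbb{k}_{i}=\mathbb{k}_{i-1}(t_i)$. Each $\mathbb{k}_i$ contains a primitive $\pth$-root of unity.
\begin{itemize}
\item For $n=1$ the claim is exactly Theorem~\ref{thm: rational function field trivial radical}.
\item For $n\geq 2$, apply Theorem~\ref{thm: rational function field trivial radical} to $\KK=\mathbb{k}_{n-1}(t_n)$. This gives the $p$-Kijima--Nishi property at once.
\item To get triviality of the radical, it suffices by the same theorem to know that $G_{\mathbb{k}_{n-1}}(p)$ is infinite. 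For this I will use that $\mathbb{k}_{n-1}\supseteq\mathbb{k}(t_1)$ with $n-1\geq1$, and that $t_{1}$ is not a $p^m$-th power in $\mathbb{k}_{n-1}$ for any $m\geq1$ (look at its $t_1$-adic valuation).
\item Hence $\mathbb{k}_{n-1}(\sqrt[p^m]{t_1})$ is a cyclic extension of degree $p^m$ for every $m$, since $\mu_p\subseteq\mathbb{k}_{n-1}$. So $G_{\mathbb{k}_{n-1}}(p)$ has quotients of unbounded order and is infinite.
\item For $p=2$, Kummer theory requires $t_1\notin -4\mathbb{k}_{n-1}^{\times4}$. This also follows from the valuation argument, since $t_1$ has valuation $1$, which is not divisible by $4$.
\end{itemize}
Thus for $n\geq2$ we get $\calR_p(G_\KK)=0$. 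The same argument with $n=1$ and $G_{\mathbb{k}}(p)$ infinite is again just Theorem~\ref{thm: rational function field trivial radical}.

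\emph{Infinite transcendence degree.} Write $\KK=\varinjlim \KK_\lambda$ over the subfields $\KK_\lambda$ generated over $\mathbb{k}$ by finitely many (at least two) of the variables. I claim $\calR_p(G_\KK)=0$, which gives the $p$-Kijima--Nishi property by Corollary~\ref{cor:trivial R}.
\begin{itemize}
\item Take $a\in\KK^\times$ with $\kappa(a)\in\calR_p(G_\KK)$. Then $a$ lies in some $\KK_\lambda$, and we may enlarge $\lambda$ freely.
\item Using Lemma~\ref{lem:norm and cup} and Remark~\ref{rem:Brauer group and cup product}, the condition says that $(a,b)_\zeta$ splits over $\KK$ for every $b\in\KK^\times$, in particular for every $b\in\KK_\lambda^\times$.
\item I want to descend this splitting to $\KK_\lambda$. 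The key point is that $\KK=\KK_\lambda(T)$ is purely transcendental over $\KK_\lambda$. By the injectivity of $\rmH^2(\mathbb{k}',\FF_p)\to\rmH^2(\mathbb{k}'(t),\FF_p)$ from the Faddeev sequence, iterated over finitely many variables and then passed to the limit, the map $\rmH^2(\KK_\lambda,\FF_p)\to\rmH^2(\KK,\FF_p)$ is injective. Each step $\mathbb{k}'\subseteq\mathbb{k}'(t)$ is covered by the Faddeev sequence as quoted in the proof of Theorem~\ref{thm: rational function field trivial radical}.
\item It follows that $\kappa(a)\smallsmile\kappa(b)=0$ already in $\rmH^2(\KK_\lambda,\FF_p)$ for every $b\in\KK_\lambda^\times$. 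Hence $\kappa(a)\in\calR_p(G_{\KK_\lambda})=0$ by the finite case.
\item So $a\in\KK_\lambda^{\times p}\subseteq\KK^{\times p}$, and therefore $\calR_p(G_\KK)=0$.
\end{itemize}

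The main obstacle is this descent step, namely the injectivity of $\rmH^2$ under purely transcendental extension, including the passage to direct limits. Once that is in place, the rest is formal bookkeeping with Theorem~\ref{thm: rational function field trivial radical} and the valuation argument showing $G_{\mathbb{k}_{n-1}}(p)$ is infinite.
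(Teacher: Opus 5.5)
Your finite-degree argument is essentially the paper's. The paper's proof is the single sentence ``apply Theorem~\ref{thm: rational function field trivial radical} inductively'', and, as you observe, one application to $\mathbb{k}_{n-1}(t_n)$ is enough once $G_{\mathbb{k}_{n-1}}(p)$ is known to be infinite.

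One supporting claim is wrong, however. With only $\mu_p\subseteq\mathbb{k}_{n-1}$, the extension $\mathbb{k}_{n-1}(\sqrt[p^m]{t_1})/\mathbb{k}_{n-1}$ is in general not cyclic, and not even normal, since normality requires $\mu_{p^m}$. For example, take $\mathbb{k}=\QQ(\zeta_3)$, $p=3$, $m=2$. Then $\mathbb{k}_{n-1}(\sqrt[9]{t_1})$ is purely transcendental over $\mathbb{k}$, so it does not contain $\zeta_9$. The condition $t_1\notin -4\mathbb{k}_{n-1}^{\times 4}$ you quote is Capelli's irreducibility criterion, not a cyclicity criterion. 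It is superseded anyway by your valuation argument, which already gives degree $p^m$.

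The conclusion that $G_{\mathbb{k}_{n-1}}(p)$ is infinite is still true and easy to recover in either of two ways:
\begin{enumerate}
\item The splitting field $\mathbb{k}_{n-1}(\mu_{p^m},\sqrt[p^m]{t_1})$ is a Galois $p$-extension of degree $\geq p^m$. Because $\mu_p\subseteq\mathbb{k}_{n-1}$, adjoining $\mu_{p^m}$ is a $p$-extension, and what follows is a Kummer step.
\item More simply, distinct monic irreducibles of $\mathbb{k}(t_2,\dots,t_{n-1})[t_1]$ are $\FF_p$-independent in $\mathbb{k}_{n-1}^\times/\mathbb{k}_{n-1}^{\times p}$ by unique factorization. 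Hence $\rmH^1(G_{\mathbb{k}_{n-1}}(p),\FF_p)$ is infinite.
\end{enumerate}

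For infinite transcendence degree you take a genuinely different route. Your direct-limit argument is correct: continuity of Galois cohomology plus injectivity of $\rmH^2(\mathbb{k}',\FF_p)\to\rmH^2(\mathbb{k}'(t),\FF_p)$ gives injectivity of $\rmH^2(\KK_\lambda,\FF_p)\to\rmH^2(\KK,\FF_p)$, which forces $\kappa(a)\in\calR_p(G_{\KK_\lambda})=0$.

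It is more than needed, though, because Theorem~\ref{thm: rational function field trivial radical} puts no finiteness condition on its base field. Single out one variable $t_0$ and write $\KK=\mathbb{k}'(t_0)$, where $\mathbb{k}'$ is purely transcendental over $\mathbb{k}$ on the remaining variables. Then $\mathbb{k}'$ contains $\mu_p$, and $G_{\mathbb{k}'}(p)$ is infinite by the argument above. The theorem then gives $\calR_p(G_\KK)=0$ in one step, uniformly for every transcendence degree $\geq 2$, finite or not, with no further cohomological input. This is the natural reading of the paper's terse proof in the infinite case.

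What your route buys is a reusable descent principle. If $\rmH^2(F,\FF_p)\to\rmH^2(E,\FF_p)$ is injective, then a class of $\rmH^1(F,\FF_p)$ that becomes radical over $E$ was already radical over $F$. The cost is proving that injectivity and passing to the limit.
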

\begin{rem}
    This yields a large class of fields for which the Elementary Type Conjecture is not applicable, as their maximal pro-$p$ Galois groups are not finitely generated, but nevertheless have the $p$-Kijima--Nishi property.
\end{rem}
\subsection{Fields not possessing the \texorpdfstring{$p$}{}-Kijima--Nishi property}
\label{ssec:fields without KN}
Similar to the case $p=2$ not every field has the $p$-Kijima--Nishi property. For a construction we follow the proof by K.J. Becher and D. B. Leep \cite[Lem. 4.7 and Lem. 4.8]{BecherLeep2014} replacing smooth conics by Severi--Brauer varieties of dimension $p-1$. For basic properties of Severi--Brauer varieties we refer to \cite[\S 5]{GilleSzamuely}.
\begin{lem}
\label{lem:big function field construction}
    Let $\KK$ be a field containing a primitive $\pth$-root of unity and $\LL$ a cyclic extension of degree $p$. Then there exists an extension $\KK'/\KK$ with the following properties:
    \begin{enumerate}
        \item \label{it:K_L/K is regular} $\KK'\cap \KK^{\rm sep}=\KK$;
        \item \label{it:LK_L/L is transcendental} $\LL\KK'/\LL$ is purely transcendental;
        \item \label{it:restriction corestriction condition} The image of the restriction $\rmH^1(\KK,\FF_p)\to \rmH^1(\KK',\FF_p)$ is contained in the image of $\cores^1_{\LL\KK',\KK'}:\rmH^1(\LL\KK',\FF_p)\to \rmH^1(\KK',\FF_p)$.
    \end{enumerate}
\end{lem}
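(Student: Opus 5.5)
Write $\LL=\KK(\sqrt[p]{a})$ and let $\sigma$ generate $\Gal(\LL/\KK)$. The plan is to build $\KK'$ as the compositum (direct limit) of function fields of Severi--Brauer varieties of dimension $p-1$, one for each $b\in\KK^\times$. This imitates Becher--Leep's use of conics.

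\textbf{Construction.} For $b\in\KK^\times$, fix a primitive $\pth$-root of unity $\zeta$. By Lemma~\ref{lem:norm and cup} and Remark~\ref{rem:Brauer group and cup product}, $b$ is a norm from $\LL$ exactly when the cyclic algebra $A_b\coloneq(a,b)_\zeta$ splits. Let $X_b$ be the Severi--Brauer variety of $A_b$. Well-order $\KK^\times$ (or $\KK^\times/\KK^{\times p}$) and take the transfinite compositum of the function fields $\KK(X_b)$; for $b\in \KK^{\times p}\cdot N(\LL^\times)$ nothing needs adjoining. Call the result $\KK'$.

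\textbf{Property (1).} Each $X_b$ is a geometrically integral smooth projective variety, and geometrically it is $\mathbb{P}^{p-1}$. Hence $\KK(X_b)/\KK$ is a regular extension, so $\KK$ is separably (indeed algebraically) closed in it. Regularity is preserved under towers and directed unions, because being separably closed passes to unions. This gives $\KK'\cap\KK^{\rm sep}=\KK$.

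\textbf{Property (2).} Over $\LL$, every $A_b$ splits, since $a\in\LL^{\times p}$. So $X_b\times_\KK\LL\cong\mathbb{P}^{p-1}_\LL$, and $\LL(X_b)$ is purely transcendental over $\LL$. At each later stage the base field $\LL\cdot(\text{previous compositum})$ is purely transcendental over $\LL$, and $A_b$ still splits over it. Transfinitely, $\LL\KK'$ is therefore a directed union of purely transcendental extensions over nested bases. To get a genuinely purely transcendental extension, I would fix the transcendence bases compatibly. Each new step adjoins $p-1$ independent variables over the previous field, namely the coordinates on $\mathbb{P}^{p-1}$. The union of these bases is then a transcendence basis generating $\LL\KK'$. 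Property (1) also guarantees $\LL\KK'=\LL\otimes_\KK\KK'$ is a field of degree $p$ over $\KK'$.

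\textbf{Property (3).} Let $b\in\KK^\times$. The algebra $A_b$ splits over $\KK(X_b)\subseteq\KK'$, so $(a,b)_\zeta$ is trivial in $\Br(\KK')$. By Lemma~\ref{lem:norm and cup} applied over $\KK'$, where $\LL\KK'=\KK'(\sqrt[p]{a})$, we get $b=N_{\LL\KK'/\KK'}(u)$ for some $u$. The commutative square \eqref{diag:cores square} then gives $\res(\kappa(b))=\cores^1_{\LL\KK',\KK'}(\kappa(u))$. Since $\kappa$ is surjective onto $\rmH^1(\KK,\FF_p)$, this proves (3).

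\textbf{Main obstacle.} I expect the main difficulty to be the bookkeeping in (2): checking that an infinite (transcendental) compositum of Severi--Brauer function fields really yields a purely transcendental extension after base change to $\LL$, rather than merely a union of rational extensions. This needs an explicit choice of generators at each step of the transfinite induction.
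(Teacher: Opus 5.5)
Your proposal is correct and follows essentially the paper's route. The paper takes the direct limit of function fields of finite products of all Severi--Brauer varieties of dimension $p-1$ over $\KK$ with an $\LL$-rational point; up to isomorphism these are exactly your $X_b$. It then deduces (1) from geometric integrality, (2) from $X_\LL\cong\mathbb{P}^{p-1}_\LL$, and (3) from the vanishing of $\beta\smallsmile\kappa(a)$ over $\KK'$ combined with the norm/cup criterion, just as you do.

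The obstacle you anticipated in (2) is harmless. Adjoining $p-1$ coordinates at each step and taking the union of the resulting transcendence bases is precisely the routine argument that the paper's brief ``the same holds for finite products'' and its passage to the limit leave implicit.
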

\begin{proof}
    Let $\mathcal{B}_\LL$ be a set of representatives of the isomorphism classes of Severi--Brauer varieties of dimension $p-1$ over $\KK$ having a rational point over $\LL$. For a variety $Y/\KK$, we denote by $\KK(Y)$ the function field of $Y$. Now we set
    \begin{align*}
        \KK'\coloneq  \varinjlim_{S\subseteq \mathcal{B}_\LL\,\text{ finite}}\KK\big({\prod}_{X\in S}X\big)
    \end{align*}
    Since each Severi--Brauer variety is geometrically integral, so are their finite products. By~\cite[Chapter 3, Cor. 2.14]{Liu2006} this implies (\ref{it:K_L/K is regular}). 

    For each $X\in \mathcal{B}_\LL$ we have $X_\LL\cong \mathbb{P}_{\LL}^{p-1}$ and thus $\LL\KK(X)/\LL$ is purely transcendental, which implies the same for finite products. This implies (\ref{it:LK_L/L is transcendental}).

    For (\ref{it:restriction corestriction condition}), we write $\LL=\KK(\sqrt[p]{\alpha})$ for a suitable $\alpha\in \KK$. Let $\beta\in \rmH^1(\KK,\FF_p)$ and $\beta'\in \rmH^1(\KK',\FF_p)$ be its restriction. Then, by Lemma~\ref{lem:norm and cup}, we have $\beta'\in \im(\cores^1_{\LL\KK',\KK'})$ if and only if $\beta'\smallsmile \kappa(\alpha)=0$. Consider the cup product $\beta\smallsmile \alpha\in \rmH^2(\KK,\FF_p)$, which vanishes under the restriction to $\rmH^2(\LL,\FF_p)$, and thus corresponds to a Severi--Brauer variety $X\in \mathcal{B}_\LL$. Since $\KK'$ contains $\KK(X)$, we see that $X_{\KK'}\cong \mathbb{P}^{p-1}_{\KK'}$ and therefore the restriction of $\beta\smallsmile\alpha$ to $\rmH^2(\KK',\FF_p)$ is trivial. This finishes the proof.
\end{proof}

\begin{thm}
\label{thm:Fields without p-Kijima}
    For every field $\KK$ of characteristic different from $p$, there exists an extension $\KK_\infty/\KK$ such that $\KK_\infty$ does not have the $p$-Kijima--Nishi property. 
\end{thm}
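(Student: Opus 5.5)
The idea is to build, over any base field, a field $\KK_\infty$ in which one fixed nontrivial class $\alpha$ lies in $\calR_p(G_{\KK_\infty})$, while the index-$p$ subgroup $\ker(\alpha)$ corresponds to a purely transcendental extension of transcendence degree at least $2$, and hence has trivial radical by Corollary~\ref{cor: function field in n variables has trivial radical}. Then $c_{\ker\alpha,G}$ is the zero map onto a nonzero group, so the $p$-Kijima--Nishi property fails. The tool for forcing $\alpha$ into the radical is Lemma~\ref{lem:big function field construction}, iterated countably many times.

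\textbf{Setup.} Replace $\KK$ by $\KK(\zeta_p)$, which is allowed since $\mathrm{char}\,\KK\neq p$. Then set $\KK_0\coloneq\KK(\zeta_p)(x,y)$ with $x,y$ independent transcendentals. Let $a\coloneq x$ and $\LL_0\coloneq\KK_0(\sqrt[p]{a})$; this is a cyclic extension of degree $p$ because $x\notin\KK_0^{\times p}$. Put $\alpha_0=\kappa(a)\neq0$.

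\textbf{Iteration.} Suppose $\KK_n\supseteq\KK_0$ has been constructed with $\LL_n\coloneq\LL_0\KK_n$ cyclic of degree $p$ over $\KK_n$. Apply Lemma~\ref{lem:big function field construction} to $(\KK_n,\LL_n)$ and let $\KK_{n+1}\coloneq\KK_n'$. Set $\KK_\infty=\bigcup_n\KK_n$ and $\LL_\infty=\LL_0\KK_\infty$.

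\begin{itemize}
\item \emph{$\alpha$ stays nonzero.} Property (\ref{it:K_L/K is regular}) gives $\KK_{n+1}\cap\KK_n^{\rm sep}=\KK_n$. By induction, every element of $\KK_n$ that is separable algebraic over $\KK_0$ already lies in $\KK_0$. Hence $\KK_\infty\cap\KK_0^{\rm sep}=\KK_0$, so $\sqrt[p]{a}\notin\KK_\infty$. Thus $\LL_\infty/\KK_\infty$ has degree $p$, $U\coloneq G_{\LL_\infty}$ is normal of index $p$ in $G\coloneq G_{\KK_\infty}$, and $\alpha\coloneq\res(\alpha_0)\neq0$.
\item \emph{$\alpha$ lies in the radical.} Any $\beta\in\rmH^1(\KK_\infty,\FF_p)$ is inflated from some finite stage, since $G_{\KK_\infty}=\varprojlim G_{\KK_n}$ in the sense that $\rmH^1(\KK_\infty)=\varinjlim\rmH^1(\KK_n)$. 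So $\beta$ is the restriction of some $\beta_n\in\rmH^1(\KK_n,\FF_p)$. By property (\ref{it:restriction corestriction condition}), the restriction of $\beta_n$ to $\KK_{n+1}$ is a corestriction from $\LL_{n+1}$. By Lemma~\ref{lem:norm and cup}, this means it is cup-orthogonal to $\kappa(a)$ over $\KK_{n+1}$, and this persists after restricting to $\KK_\infty$. Hence $\beta\smallsmile\alpha=0$ for every $\beta$, that is, $\alpha\in\calR_p(G)$.
\end{itemize}

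\textbf{Triviality of $\calR_p(U)$.} By property (\ref{it:LK_L/L is transcendental}), each $\LL_{n+1}/\LL_n$ is purely transcendental. I need to conclude that $\LL_\infty$ is purely transcendental over $\LL_0=\KK(\zeta_p)(\sqrt[p]{x},y)$. Since $\LL_0$ is itself purely transcendental of degree $2$ over $\KK(\zeta_p)$, this gives $\LL_\infty/\KK(\zeta_p)$ purely transcendental of transcendence degree $\ge2$. Corollary~\ref{cor: function field in n variables has trivial radical} then yields $\calR_p(U)=0$. Therefore $c_{U,G}\colon0\to\calR_p(G)\ni\alpha\neq0$ is not surjective, and $\KK_\infty$ does not have the $p$-Kijima--Nishi property.

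\textbf{Main obstacle.} The delicate point is the transcendence step. A union of a tower of purely transcendental extensions is purely transcendental, since one can choose transcendence bases compatibly: at each step, take the union of the previous basis with a basis of the new layer. What needs care is that the statement of Lemma~\ref{lem:big function field construction} only says ``purely transcendental'' for $\LL\KK'/\LL$, possibly of infinite degree. I must ensure this is applied with base $\LL_n=\LL_0\KK_n$ at each stage, so that the bases genuinely concatenate. If that bookkeeping were problematic, I would instead rebuild $\KK_\infty$ in one step as a direct limit of function fields of finite products of Severi--Brauer varieties over $\KK_0$ and its successive extensions, mimicking \cite[Lem.~4.8]{BecherLeep2014}.
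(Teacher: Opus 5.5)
Your proposal is correct and follows essentially the same route as the paper's proof. Both iterate Lemma~\ref{lem:big function field construction} along the tower $\LL\KK_i/\KK_i$. Both combine property (\ref{it:restriction corestriction condition}) with Lemma~\ref{lem:norm and cup} and $\rmH^1(\KK_\infty,\FF_p)=\varinjlim\rmH^1(\KK_i,\FF_p)$ to place $\alpha$ in $\calR_p(G_{\KK_\infty})$. Both then combine property (\ref{it:LK_L/L is transcendental}) with Corollary~\ref{cor: function field in n variables has trivial radical} to get $\calR_p(G_{\LL\KK_\infty})=0$.

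The differences are minor. Starting from $\KK(\zeta_p)(x,y)$ with $a=x$ makes the transcendence-degree hypothesis of the Corollary automatic, whereas the paper implicitly relies on each stage of the tower adding at least one transcendental over $\LL$. You also spell out two points the paper leaves implicit: the non-triviality of $\alpha$ via relative separable closures, and the concatenation of transcendence bases along the tower.
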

\begin{proof}
    We can assume without loss of generality that $\KK$ has a cyclic degree $p$-extension $\LL/\KK$ and that $\KK$ contains $\zeta_p$. Otherwise, we simply replace $\KK$ by $\KK(\zeta_p,t)$. Write $\LL=\KK(\sqrt[p]{\alpha})$  and $\KK_0\coloneq  \KK$. Then we define inductively $\KK_{i+1}\coloneq  \KK_i'$ with $\KK_i'$ as in Lemma~\ref{lem:big function field construction} for the cyclic extension $\LL\KK_i/\KK_i$, which has degree $p$ as $\LL\cap \KK_i=\KK$ by (\ref{it:K_L/K is regular}). Finally, we set $\KK_\infty=\varinjlim \KK_i$.

    By Lemma~\ref{lem:big function field construction}, we see that $\LL\KK_\infty/\KK_\infty$ is purely transcendental, and thus by Corollary~\ref{cor: function field in n variables has trivial radical} we conclude $\calR_p(G_{\LL\KK_\infty})=0$. It remains to show that $\calR_p(G_{{\KK}_\infty})\neq 0$. In fact, we will show that the class of $\alpha$ defines a non-trivial element in $\calR_p(G_{\KK_\infty})$. The non-triviality is clear. 

    By the second statement of Lemma~\ref{lem:big function field construction}, we see that for each $\beta_i\in \rmH^1(\KK_i,\FF_p)$ we have $\alpha\smallsmile\res_{\KK_i,\KK_{i+1}}(\beta_i)=0$. Now, since $\rmH^1(\KK_\infty,\FF_p)=\varinjlim \rmH^1(\KK_i,\FF_p)$, we conclude that $\alpha\in \calR_p(G_{{\KK}_\infty})$. This finishes the proof.
\end{proof}
\begin{rem}
    The construction by K.J. Becher and D.B. Leep --- and, consequently, our proof --- heavily relies on the construction of highly transcendental extensions. 
    It would be interesting to investigate whether there exists an algebraic extension $\KK'/\KK$ such that $\KK$ has the $p$-Kijima--Nishi property, while $\KK'$ does not.  
\end{rem}

%%%%%%%%%%%%%%%%%%%%%%%%%%%%%%%%%%%%%%%%%%%%%%%%%%%5
%%%%%%%%%%%55
%%%%%%%%%%%%%%%%%%%%%%%%%%%%%%%%%%%%%%%%%%%%%%

\section{Pro-\texorpdfstring{$p$}{} groups of elementary type}
\label{sec:Elementary Type groups have KN}
%%%%%%%%%%%%%%%%%%%%%%%%%%%%%%%%%%%%%%
\subsection{Pro-\texorpdfstring{$p$}{} groups of elementary type and orientations}
\label{ssec:Elementary Type groups}

In order to define the family of pro-$p$ groups of elementary type, it is useful to recall the notion of an {orientation of a pro-$p$ group}.
Let $1+p\ZZ_p$ denote the multiplicative group of principal units of the ring of $p$-adic integers --- i.e., $1+p\ZZ_p=\set{1+p\lambda}{\lambda\in\ZZ_p}\subseteq \ZZ_p^\times$.
An {\sl orientation of a pro-$p$ group $G$} is a morphism of pro-$p$ groups $\theta\colon G\to1+p\ZZ_p$; the pair $(G,\theta)$ is called an {\sl oriented pro-$p$ group} (in \cite{ETC2}, an oriented pro-$p$ group is called a {\sl cyclotomic pro-$p$ pair}).

Every Demu\v{s}kin group $G$ comes endowed with a canonical orientation $\theta_G\colon G\to1+p\ZZ_p$, as described by J.~Labute in \cite[\S~3]{labute:demushkin}, which is precisely the cyclotomic character (whence the name) if $G=G_{\KK}(p)$ for some field $\KK$ containing a primitive $\pth$-root of unity --- see also \cite[\S~5.3]{qw:cyc}.
For the purposes of this work we do not need a description of such an orientation. 

\begin{defn}\label{defn:ET}\rm
 The family of pro-$p$ groups of elementary type is the smallest family of oriented pro-$p$ groups $(G,\theta)$ with $G$ finitely generated, containing:
 \begin{enumerate}[label=(\alph*)]
  \item oriented pro-$p$ groups $(F,\theta)$, where $F$ is a finitely generated free pro-$p$ group, and $\theta\colon F\to1+p\Z_p$ is any orientation,
  \item oriented pro-$p$ groups $(G,\theta_G)$, where $G$ is a Demu\v{s}kin group, and $\theta_G\colon G\to1+p\Z_p$ is its canonical orientation;
 \end{enumerate}
and satisfying the following:
\begin{enumerate}[label=(\alph*)]
\setcounter{enumi}{2}
 \item \label{it:defn ET free product} if $(G_1,\theta_1)$ and $(G_2,\theta_2)$ are of elementary type, then also $(G_1\ast_p G_2,\theta)$ --- where $\theta\colon G_1\ast_p G_2\to1+p\Z_p$ is the orientation induced by $\theta_1$ and $\theta_2$ via the universal property of free pro-$p$ products --- is of elementary type,
 \item \label{it:defn ET semidirect product} if $(G_0,\theta_0)$ is of elementary type, then also $(N\rtimes G_0,\theta)$ --- where $N\simeq\Z_p$,
 $$ghg^{-1}=h^{\theta_0(g)}\qquad\text{for all }g\in G_0,\:h\in N,$$
 and $\theta\colon (N\rtimes G_0)\to1+p\Z_p$ is the orientation satisfying $\theta\vert_{G_0}=\theta$ and $\theta(h)=1$ for all $h\in N$ --- is of elementary type.
\end{enumerate}
\end{defn}
The subsequent subsections aim to prove the following theorem:
\begin{thm}
\label{thm:ETC}
    Let $(G,\theta)$ be a group of elementary type. Then $G$ has the $p$-Kijima--Nishi property.
\end{thm}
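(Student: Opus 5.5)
The plan is to induct on the construction of $(G,\theta)$ in Definition~\ref{defn:ET}. The base cases are already settled. Finitely generated free pro-$p$ groups have the property by Example~\ref{ex:KNFree}, and Demu\v{s}kin groups by Proposition~\ref{prop:Demushkin KN}. One caveat: $C_2$ (a Demu\v{s}kin group for $p=2$) has $\calR_2=0$, so it is also covered. It therefore suffices to prove that the property is stable under the two gluing operations \ref{it:defn ET free product} and \ref{it:defn ET semidirect product}. To make the induction go through, I would strengthen the hypothesis slightly. Besides the $p$-Kijima--Nishi property, I would also carry along an explicit description of $\calR_p(G)$, namely as a subspace of $\rmH^1(G,\F_p)$ determined by the orientation. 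For instance, I would track whether $\calR_p(G)$ is zero, and how it interacts with the class $\theta$ mod $p^2$.

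\emph{Free products.} For $G=G_1\ast_pG_2$, restriction gives $\rmH^n(G,\F_p)\cong\rmH^n(G_1,\F_p)\oplus\rmH^n(G_2,\F_p)$ for $n\geq1$. Under this isomorphism, cup products of classes coming from different factors vanish. I would deduce $\calR_p(G)=\calR_p(G_1)\oplus\calR_p(G_2)$. Now let $U\le G$ have index $p$. By the pro-$p$ Kurosh subgroup theorem, $U$ is a free product of conjugates of $U\cap G_i$ (either $G_i$ itself, appearing $p$ times, or a single index-$p$ subgroup $U_i=U\cap G_i$) together with a free pro-$p$ factor. Corestriction from a conjugate $gG_ig^{-1}$ to $G$ is computed through $G_i$. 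In the second case, surjectivity onto $\calR_p(G_i)$ follows from the hypothesis on $G_i$. In the first case, I would corestrict from a single copy of $G_i$ in $U$ and use that $\cores_{G_i,G}$ composed with restriction to $G_i$ is the identity on the $G_i$-summand. This needs care, because it requires that the copy $G_i\subseteq U$ maps isomorphically, and the double coset formula for $\res\circ\cores$ must be handled. This case is the content of Proposition~\ref{prop:KNFreeProduct}, which I would prove in exactly this way.

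\emph{Semidirect products with $\Z_p$.} For $G=N\rtimes G_0$, the Hochschild--Serre spectral sequence gives $\rmH^1(G)=\rmH^1(G_0)\oplus\F_p\nu$, where $\nu$ is dual to a generator of $N$. The Labute/Wadsworth-type computation ($\rmH^\bullet(G)\cong\rmH^\bullet(G_0)\otimes\Lambda(\nu)$ when $\theta_0\equiv1$ mod $p$, suitably twisted otherwise) shows that $\nu\smallsmile\alpha\neq0$ for every nonzero $\alpha\in\rmH^1(G_0)$. Hence $\calR_p(G)=0$, unless $G_0$ is trivial (giving $G\cong\Z_p$, handled by Example~\ref{ex:cyclic groups}). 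The case $p=2$ with $\theta_0$ nontrivial mod $4$ may leave $\nu$ or $\nu+\chi$ in the radical, and is handled separately. In the generic case Corollary~\ref{cor:trivial R} finishes. In the exceptional case, I would apply Proposition~\ref{prop:KN property based on subgroup} with $U=N\rtimes\ker(\theta_0 \bmod 4)$, checking surjectivity onto the at most one-dimensional radical directly.

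\emph{Main obstacle.} I expect the main obstacle to be the exact computation of cup products in $N\rtimes G_0$ for $p=2$ with nontrivial twisting. This is where the orientation enters, and where one must verify that $\nu\smallsmile\nu$ and $\nu\smallsmile\alpha$ behave as expected. I would first try to mimic the field-theoretic picture $\KK\rightsquigarrow\KK(\!(t)\!)$: by Proposition~\ref{prop:Radical of discretely valued field}, the radical vanishes there whenever the residue group is nontrivial. I would then transport this computation group-theoretically via the five-term sequence.
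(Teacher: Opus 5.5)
Your overall plan matches the paper's: induction over Definition~\ref{defn:ET}, with Example~\ref{ex:KNFree} and Proposition~\ref{prop:Demushkin KN} as base cases, and Proposition~\ref{prop:KNFreeProduct} (Kurosh subgroup theorem plus the restriction--corestriction square) for free products. That part is fine.

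The gap is in the semidirect step. You derive $\calR_p(N\rtimes G_0)=0$ from a ring decomposition $\rmH^\bullet(G,\FF_p)\cong\rmH^\bullet(G_0,\FF_p)\otimes\Lambda(\nu)$. For $p=2$ and $\theta_0\not\equiv1\pmod 4$ this decomposition really does fail: $\nu$ does not lift to a homomorphism $G\to\ZZ/4$, so $\nu\smallsmile\nu\neq0$. You leave this case open, and your fallback cannot close it. With $U=N\rtimes\ker(\theta_0\bmod 4)$, your generic case gives $\calR_2(U)=0$. Surjectivity of $c_{U,G}$ is then equivalent to $\calR_2(G)=0$, which is exactly the statement you could not prove. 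And if $\calR_2(G)$ really were one-dimensional, Proposition~\ref{prop:KN property based on subgroup} could not be applied with this $U$ at all. Note also that knowing $\nu\smallsmile\alpha\neq0$ for all nonzero $\alpha\in\rmH^1(G_0,\FF_p)$ does not, by itself, exclude classes like $\nu+\chi$ from the radical. What you actually need is $\nu\smallsmile\alpha\notin\inf(\rmH^2(G_0,\FF_p))$.

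The missing idea is Proposition~\ref{prop:almost direct product}, applied via Corollary~\ref{cor:KNsemidirect}. In Definition~\ref{defn:ET} orientations take values in $1+p\ZZ_p$. Hence for every $p$, including $p=2$, $G_0$ acts trivially on $\rmH^1(N,\FF_p)$, so the product is $p$-almost direct and the ``twisted'' case never arises. Instead of the full ring structure, use only the Hochschild--Serre filtration on $\rmH^2(G,\FF_p)$. Since the extension splits, $F^2=\inf(\rmH^2(G_0,\FF_p))$ and $F^1/F^2\cong\rmH^1(G_0,\FF_p)\otimes\rmH^1(N,\FF_p)$. For $\beta_0$ inflated from $G_0$, the class $\nu\smallsmile\beta_0$ lies in $F^1$ with image $\pm\beta_0\otimes\nu\vert_N$ in $F^1/F^2$, while $\alpha_0\smallsmile\beta_0\in F^2$. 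Now take $\alpha=\alpha_0+c\nu\in\calR_p(G)$. Pairing with a nonzero $\beta_0$ (available because $G_0\neq1$) forces $c=0$, and then pairing $\alpha_0$ with $\nu$ forces $\alpha_0=0$. The class $\nu\smallsmile\nu$ never enters. So $\calR_p(N\rtimes G_0)=0$ for all $p$ and all nontrivial $G_0$, and Corollary~\ref{cor:trivial R} finishes the step with no exceptional case. Your strengthened induction hypothesis tracking $\theta\bmod p^2$ is therefore unnecessary.
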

We quickly summarize the strategy of the proof. By Proposition~\ref{prop:Demushkin KN} the statement is true for Demu\v{s}kin groups and by Example~\ref{ex:KNFree} for free pro-$p$ groups. Thus, it remains to show that the $p$-Kijima--Nishi property is preserved under the operations described in \ref{it:defn ET free product} and \ref{it:defn ET semidirect product}. This will be done in Proposition~\ref{prop:KNFreeProduct} and Corollary~\ref{cor:KNsemidirect}.

\begin{rem}
    As stated in the Introduction, in \cite{Dario} R.P.~Dario sketches a proof for a positive answer to the ``H-conjecture'' under the assumption that the maximal pro-2 Galois group of $\K$ is a pro-2 group of elementary type, considering the behavior of the Kaplansky radical with respect to free pro-2 products and cyclotomic semidirect products applied to the maximal pro-2 Galois group of the fields taken into consideration.
\end{rem}
Before proceeding to the proof, we state that groups of elementary type satisfy a much stronger property than just the $p$-Kijima--Nishi property. This is due to the fact that each open subgroup of a group of elementary type is still of elementary type:
\begin{cor}
    Let $G$ be a pro-$p$ group of elementary type, then for any open subgroup $U$ of $G$, the map $c_{U,G}:\calR_p(U)\to \calR_p(G)$ is surjective.
\end{cor}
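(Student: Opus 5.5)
The plan is to reduce the statement to Theorem~\ref{thm:ETC} by induction on the index $[G:U]$, which is a power of $p$ since $G$ is pro-$p$. Two inputs are needed. The first is the transitivity of corestriction, $c_{V,G}\circ c_{U,V}=c_{U,G}$ for $U\leq V\leq G$ open. This holds because corestriction is transitive on $\rmH^1(\,\cdot\,,\FF_p)$, and by Lemma~\ref{lem:maps between radicals}~\ref{it:lem maps between radicals cores} each $c$ maps radicals into radicals. The second is the structural fact, quoted just before the statement, that every open subgroup of a pro-$p$ group of elementary type is again of elementary type, with the restricted orientation.

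For the induction, let $U\leq G$ be open with $U\neq G$. In a finite $p$-group every proper subgroup lies in a maximal subgroup, and maximal subgroups have index $p$ and are normal. Applying this to $G/N$, where $N\leq U$ is an open normal subgroup of $G$, we obtain an open normal subgroup $V$ of $G$ with $U\leq V$ and $[G:V]=p$. Then $[V:U]<[G:U]$.

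Since $(G,\theta)$ is of elementary type, Theorem~\ref{thm:ETC} shows that $c_{V,G}\colon\calR_p(V)\to\calR_p(G)$ is surjective. Next, $V$ is open in $G$, so $(V,\theta|_V)$ is of elementary type. Applying the induction hypothesis to the pair $U\leq V$ shows that $c_{U,V}\colon\calR_p(U)\to\calR_p(V)$ is surjective. By transitivity, $c_{U,G}=c_{V,G}\circ c_{U,V}$ is a composition of surjections and hence surjective. The base case $U=G$ is trivial, since $c_{G,G}$ is the identity.

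The only nontrivial input is that open subgroups of groups of elementary type are of elementary type. I would take this from the literature, namely Efrat's work on the elementary type conjecture, where closure under open subgroups is shown by Kurosh-type arguments for free products and by the observation that open subgroups of $\ZZ_p\rtimes G_0$ are again of the same form. The paper cites this fact, so I would invoke it directly. The remaining issue is bookkeeping with orientations: Theorem~\ref{thm:ETC} is stated for oriented pairs, so one must check that $V$ is of elementary type with the restricted orientation. This is exactly what the cited closure result provides.
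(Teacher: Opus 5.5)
Your proposal is correct and follows essentially the same route as the paper. The paper also takes a chain $G=U_0\supseteq U_1\supseteq\cdots\supseteq U_r=U$ of successive index-$p$ subgroups and uses that each $U_i$ is again of elementary type, so that Theorem~\ref{thm:ETC} makes every step $\calR_p(U_{i+1})\to\calR_p(U_i)$ surjective; it then concludes by transitivity of corestriction, just as your induction on the index does.
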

\begin{proof}
    Choose a sequence of open subgroups $G=U_0\supseteq U_1\supseteq ...\supseteq U_r=U$, such that $[U_i:U_{i+1}]=p$ for each $i$. Then each map $\calR_p(U_{i+1})\to \calR_p(U_i)$ is surjective, as each $U_i$ --- by being of elementary type --- has the $p$-Kijima--Nishi property by Theorem~\ref{thm:ETC}. Hence, their composition is also surjective, which coincides with the claimed one, by transitivity of the corestriction.
\end{proof}
%\todo{motivate this}
We tried to find more examples of groups satisfying this stronger property, but every (finitely generated) example constructed later turned out to be (virtually) of elementary type. In the following remark, we explain this observation for certain pro-$p$ subgroups of profinite completions of limit groups:
\begin{rems}
If $G$ is a limit group (e.g. the fundamental group of an orientable closed surface), i.e., a finitely generated fully residually free group (see the definition and example in \cite{SZ26}), we can show that any finitely generated pro-$p$ subgroup of $\widehat{G}$ satisfies the $p$-Kijima--Nishi property by showing that these groups are of elementary type (see \cite{ZZ19}). It is shown in \cite[Lem. 3.13]{SZ26} (jointly with Corollary \ref{cor:acylindricalaction}) that if
$$G = A \amalg_C B$$
where $C$ is a self-centralizing procyclic subgroup of a pro-$p$ limit group $A$ (see the definition in \cite{SZ26}) and $B \simeq \ZZ_p^k$ contains $C$ as a direct summand, then for any subgroup $H$ of $G$, $H$ satisfies the $p$-Kijima--Nishi property and $H$ is of elementary type (see \cite{WZ17} and \cite[Thm. 1.4]{SZ26}). 
\end{rems}

%%%%%%%%%%%%%%%%%%%%%%%%%%%%%%%%%%%%%%

%%%%%%%%%%%%%%%%%%%%%%%%%%%%%%%%%%%%%
\subsection{Free pro-\texorpdfstring{$p$}{} products}
\label{ssec:Free pro-p and free products}

The following lemma shows that $\FF_p$-cup radicals of free pro-$p$ products of pro-$p$ groups can be completely described in terms of the radicals of their free factors. For the definition of free pro-$p$ product of an infinite family of pro-$p$ groups, we refer to \cite[Ch.~IV, \S~1]{NSW2008}.

\begin{lem}\label{lem:freeprod}
    Let $G_i$ for $i\in I$ be a family of pro-$p$ groups and let $G=\coprod_{i\in I}G_i$ be its free pro-$p$ product. Then
    \begin{align*}
        \calR_p(G)\cong \bigoplus_{i\in I}\calR_p(G_i).
    \end{align*}
\end{lem}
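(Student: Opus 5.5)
We rely on the standard description of the mod-$p$ cohomology of a free pro-$p$ product (cf. \cite[Ch.~IV, \S~1]{NSW2008}). For $n\geq 1$, the restriction maps to the factors induce isomorphisms
\[
\rho_n\colon \rmH^n(G,\FF_p)\overset{\sim}{\longrightarrow}\prod_{i\in I}\rmH^n(G_i,\FF_p).
\]
For $n=1$, $\rmH^1(G,\FF_p)=\Hom_{\rm cts}(G,\FF_p)$. By the universal property of the free pro-$p$ product, a continuous homomorphism $G\to\FF_p$ is the same as a family of homomorphisms $G_i\to\FF_p$, almost all of which vanish; this is the continuity/restricted-product condition. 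So in degree $1$ the right-hand side is really $\bigoplus_{i}\rmH^1(G_i,\FF_p)$. In degree $2$ we only need that the product of the restriction maps $\rmH^2(G,\FF_p)\to\prod_i\rmH^2(G_i,\FF_p)$ is injective. For finitely many factors this follows from the Mayer--Vietoris type sequence for free products. For infinitely many factors we pass to the limit over finite subfamilies, using $\rmH^2(G,\FF_p)=\varinjlim\rmH^2(\coprod_{i\in J}G_i,\FF_p)$ over finite $J\subseteq I$, together with compatible retractions.

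The key steps, in order, are as follows. Write $\alpha\in\rmH^1(G,\FF_p)$ as $\alpha=(\alpha_i)_i$ with $\alpha_i=\res_{G,G_i}(\alpha)$, almost all zero.

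\emph{Step 1 ($\supseteq$).} Suppose each $\alpha_i\in\calR_p(G_i)$, and let $\beta=(\beta_i)_i$ be arbitrary. Restriction is a ring homomorphism, so $\res_{G,G_i}(\alpha\smallsmile\beta)=\alpha_i\smallsmile\beta_i=0$ for every $i$. Injectivity in degree $2$ then gives $\alpha\smallsmile\beta=0$. Hence $\alpha\in\calR_p(G)$.

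\emph{Step 2 ($\subseteq$).} Suppose $\alpha\in\calR_p(G)$, and fix an index $i$. Let $\pi_i\colon G\to G_i$ be the canonical retraction, which is the identity on $G_i$ and trivial on the other factors. Then $\pi_i^*\colon\rmH^1(G_i,\FF_p)\to\rmH^1(G,\FF_p)$ satisfies $\res_{G,G_i}\circ\pi_i^*=\mathrm{id}$. Take any $\gamma\in\rmH^1(G_i,\FF_p)$. We compute
\[
\alpha_i\smallsmile\gamma=\res_{G,G_i}(\alpha)\smallsmile\res_{G,G_i}(\pi_i^*\gamma)=\res_{G,G_i}(\alpha\smallsmile\pi_i^*\gamma)=0.
\]
So $\alpha_i\in\calR_p(G_i)$. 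Alternatively, Lemma~\ref{lem:maps between radicals}\ref{it:lem maps between radicals res} can be applied to the inclusion $G_i\hookrightarrow G$, whose induced map on $\rmH^1$ is surjective.

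\emph{Step 3.} Combining Steps 1 and 2, $\rho_1$ restricts to an isomorphism $\calR_p(G)\cong\bigoplus_i\calR_p(G_i)$.

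Steps 2 and 3 are formal. The only genuine input, and the step I expect to be the main obstacle, is the injectivity of $\rmH^2(G,\FF_p)\to\prod_i\rmH^2(G_i,\FF_p)$ when $I$ is infinite. I would handle it by writing $G=\varprojlim_J\coprod_{i\in J}G_i$ and using continuity of cohomology. A class in $\rmH^2(G,\FF_p)$ is inflated from some finite sub-product $G_J=\coprod_{i\in J}G_i$, via the retraction $G\to G_J$. Its restrictions to the factors $G_i$ with $i\in J$ agree with those of the original class. The finite case \cite[Thm.~4.1.4]{NSW2008} then shows the class is zero on $G_J$, hence zero on $G$.
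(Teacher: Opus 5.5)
Your argument is correct and is essentially the paper's proof: the paper also uses the isomorphism $\rho^n\colon\rmH^n(G,\FF_p)\to\bigoplus_{i\in I}\rmH^n(G_i,\FF_p)$ from \cite[Thm.~4.1.4]{NSW2008} together with the fact that restriction is compatible with cup products, and your retraction $\pi_i$ just makes explicit the choice of a test class supported on a single factor. One small correction: for infinite $I$ the target of $\rho_n$ is the direct sum, not $\prod_{i\in I}$, as your own limit argument shows; and since the paper applies the cited theorem to arbitrary families, your reduction to finite sub-products is correct but not needed there.
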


\begin{proof}
    By \cite[Thm.~4.1.4]{NSW2008} we have an isomorphism
    \begin{align*}
        \rho^n:\rmH^n(G,\F_p)\overset{\sim}\longrightarrow\bigoplus_{i\in I}\rmH^n(G_i,\F_p),\quad \alpha\longmapsto \sum_{i\in I}\res_{G,G_i}^n(\alpha).
    \end{align*}
    Let $\alpha\in \rmH^n(G,\F_p)$ and $\beta\in \rmH^m(G,\F_p)$ and write $\rho(\alpha)=\sum_{i\in I}\alpha_i$, $\rho(\beta)=\sum_{i\in I}\beta_i$. As the restriction maps commute with the cup product, we have
    \begin{align*}
        \rho^{n+m}(\alpha \smallsmile \beta)=\sum_{i\in I}\alpha_i\smallsmile \beta_i.
    \end{align*}
    In the light of that, it is not hard to see that $\rho^1$ induces the desired isomorphism; indeed, $\alpha\in \rmH^1(G,\FF_p)$ belongs to $\calR_p(G)$ if and only if all of its components $\alpha_i$ are contained in $\calR_p(G_i)$.
\end{proof}
As a consequence, one has the the following result for free pro-$p$ products of pro-$p$ groups satisfying the $p$-Kijima--Nishi property.

\begin{prop}\label{prop:KNFreeProduct}
    Let $G_i$ for $i\in I$ be a (not necessarily finite) family of pro-$p$ groups and $G\coloneq \coprod_{i\in I}G_i$ be its free pro-$p$ product. Then $G$ has the $p$-Kijima--Nishi property if and only if all the $G_i$ have the $p$-Kijima--Nishi property.
\end{prop}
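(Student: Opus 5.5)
We argue through Lemma~\ref{lem:freeprod} and the Mackey double coset formula. The key input is the pro-$p$ Kurosh subgroup theorem for open subgroups of free pro-$p$ products (cf. \cite[Ch.~IV]{NSW2008} or \cite{Rib17}). Let $U\unlhd G$ be open of index $p$, say $U=\ker(\alpha)$ with $0\neq\alpha\in\rmH^1(G,\FF_p)$, and fix $g\in G$ with $\alpha(g)=1$. Then $U$ is a free pro-$p$ product
\[
U=\coprod_{i\in I_1}(U\cap G_i)\;\amalg\;\coprod_{i\in I_0}\coprod_{k=0}^{p-1}g^kG_ig^{-k}\;\amalg\;F .
\]
Here $I_1=\set{i}{\alpha|_{G_i}\neq0}$ and $I_0=I\smallsetminus I_1$, and $F$ is a free pro-$p$ group. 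By Lemma~\ref{lem:freeprod} and Example~\ref{ex:KNFree}, $\calR_p(U)$ is the direct sum of $\calR_p(U\cap G_i)$ for $i\in I_1$, of the $p$ copies of $\calR_p(G_i)$ for $i\in I_0$, and of $\rmH^1(F,\FF_p)$.

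Next we compute the $i$-th component $\res_{G,G_i}\circ\cores_{U,G}$ using Mackey's formula over $G_i\backslash G/U$. If $i\in I_1$, then $G_iU=G$, so there is a single double coset and the component equals $\cores_{U\cap G_i,G_i}\circ\res_{U,U\cap G_i}$. If $i\in I_0$, the double cosets are represented by $g^k$, and the component is the sum over $k$ of $\res_{U,g^kG_ig^{-k}}$ followed by conjugation to $G_i$.

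\textbf{The ``if'' direction.} Take a class $x\in\calR_p(U)$ supported on a single free factor of the decomposition; its restrictions to all other free factors vanish. If the factor is $U\cap G_i$ with $i\in I_1$, then $\cores_{U,G}(x)$ has only the $i$-th component, namely $c_{U\cap G_i,G_i}(x)$. By hypothesis these exhaust $\calR_p(G_i)$. If the factor is $G_i$ itself with $i\in I_0$, the only nonzero Mackey term is $k=0$, so $\cores_{U,G}(x)$ is just $x\in\calR_p(G_i)$ placed in the $i$-th component. Thus the image of $c_{U,G}$ contains every summand $\calR_p(G_i)$. Since $\calR_p(G)=\bigoplus_i\calR_p(G_i)$ is a direct sum, finite sums of such classes give all of $\calR_p(G)$, even for infinite $I$.

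\textbf{The ``only if'' direction.} Let $U_i\le G_i$ have index $p$. Let $\pi\colon G\to G_i$ be the canonical retraction, and put $U=\pi^{-1}(U_i)$. Then $i\in I_1$ and $U\cap G_i=U_i$. By the computation above, composing $c_{U,G}$ with the projection $\calR_p(G)\to\calR_p(G_i)$ gives $c_{U_i,G_i}\circ\res_{U,U_i}$. Here $\res_{U,U_i}$ maps $\calR_p(U)$ onto $\calR_p(U_i)$, being the projection onto a free factor under Lemma~\ref{lem:freeprod}. Since $c_{U,G}$ is surjective, it follows that $c_{U_i,G_i}$ is surjective.

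The main obstacle is the first step: establishing the precise Kurosh decomposition of an index-$p$ subgroup, for arbitrary and possibly infinite $I$, together with the identification of the Mackey terms with restrictions to these free factors. Once that bookkeeping is in place, the rest reduces to Lemma~\ref{lem:freeprod}.
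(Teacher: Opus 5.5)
Your proposal is correct and follows the paper's proof essentially verbatim. The ``if'' direction uses the pro-$p$ Kurosh decomposition of the index-$p$ subgroup together with Lemma~\ref{lem:freeprod} and the restriction--corestriction (Mackey) square, and the ``only if'' direction uses the subgroup $\pi^{-1}(U_i)=U_i\cdot N$, where $N$ is the normal closure of the other factors. Two details you spell out are passed over quickly in the paper: the explicit Mackey computation for the factors with $G_i\subseteq U$, and the fact that $\res_{U,U_i}$ sends $\calR_p(U)$ into $\calR_p(U_i)$ because $U_i$ is a free factor of $U$.
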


\begin{proof}
    First, assume that each $G_i$ has the $p$-Kijima--Nishi property. 

    Let $U\leq G$ be an open subgroup of index $p$. Then by a pro-$p$ version of the Kurosh Subgroup Theorem (cf., e.g., \cite[Thm.~D.3.1]{RibesZalesskii2010} or \cite[Thm.~4.2.1]{NSW2008}) for every $i\in I$ there exists a system $S_i$ of representatives $s_i$ of the double cosets $U\backslash G/G_i$ and a free pro-$p$ subgroup $F\leq G$ of finite rank such that 
    \begin{align*}
        U\cong F\ast_p\Big(\coprod_{i\in I, s_i\in S_i}(G_i^{s_i}\cap U)\Big).
    \end{align*}
    We can assume by the second Remark following Theorem~4.2.1 in \cite{NSW2008}, that $S_i$ contains $1$. By Lemma~\ref{lem:freeprod} and Example~\ref{ex:KNFree} we have 
    \begin{align*}
        \calR_p(U)\cong \rmH^1(F,\FF_p)\oplus \Big(\bigoplus_{i\in I,s_i\in S_i}\calR_p(G_i^{s_i}\cap U)\Big).
    \end{align*}
    Notice that for any $i$ and $s_i$ one has $[G_i^{s_i}:G_i^{s_i}\cap U]\in \{1,p\}$ and also $|S_i|\in \{1,p\}$. Now, let $\alpha=\sum_{i\in I}\alpha_i\in \calR_p(G)$ be arbitrary and let $I_0$ be the finite subset of $I$ consisting of the indices $i$ such that $\alpha_i\neq 0$. Fix $i\in I_0$. If $[G_i:G_i\cap U]=1$, then we choose $\beta_i=\alpha_i\in \calR_p(G_i\cap U)=\calR_p(G_i)$; otherwise, $G_i\cap U$ has index $p$ in $G_i$, and, since $G_i$ has the $p$-Kijima--Nishi property, we can choose $\beta_i\in \calR_p(G_i\cap U)$ such that 
    \begin{align*}
        \cores^1_{G_i\cap U,G_i}(\beta_i)=\alpha_i.
    \end{align*}
    Now, for $\beta\coloneq \sum_{i\in I}\beta_{i}\in \calR_p(U)$, one has $\cores^1_{U,G}(\beta)=\alpha$. This can be checked for each summand separately. For the ones with $G_i=G_i\cap U$ there is nothing to check. For the others, we have $G_iU=G$ and the statement follows from the commutative square (cf. \cite[Cor. 1.5.8]{NSW2008}):
    \begin{equation*}
        \begin{tikzcd}[column sep=huge]
    \rmH^1(U,\FF_p)\arrow[r,"\res_{U,G_i\cap U}^1"]\arrow[d,"\cores_{U,G}^1"]&\rmH^1(G_i\cap U,\FF_p)\arrow[d,"\cores_{G_i\cap U,G_i}^1"]\\
     \rmH^1(G,\FF_p)\arrow[r,"\res_{G,G_i}^1"]&\rmH^1(G_i,\FF_p)
        \end{tikzcd}
    \end{equation*}
    This shows that $c_{U,G}:\calR_p(U)\to \calR_p(G)$ is surjective. Since $U$ was arbitrary we conclude the $p$-Kijima--Nishi property.

    To show the converse implication fix $i_0\in I$, and put $A=G_{i_0}$ and $B=\coprod_{i\neq i_0}G_i$. Denote by $N$ the normal closure of $B$ in $G$. 
    If $U$ is an open subgroup of $A$ of index $p$, then the subgroup $V=U\cdot N$ is an open subgroup of $G$ of index $p$ and $V\cap A=U$, and hence $\cores_{V,G}:\calR_p(V)\to \calR_p(G)$ is surjective. Since $\res^1_{G,A}$ is surjective, we conclude that $c_{U,A}$ is surjective from the same diagram as above.
\end{proof}
In practice, it is not straightforward to check that a pro-$p$ group decomposes as a free pro-$p$ product of certain subgroups. As in the discrete case, Bass--Serre theory can be helpful to identify these situations.

The action of a pro-$p$ group $G$ on a profinite tree $T$ (see \cite[\S 2.4]{Rib17} for a definition) is said to be {\it $k$-acylindrical} if the stabilizer of any geodesic $[v,w]$ of length $>k$ is trivial for a fixed $k$.

Proposition 3.5 of \cite{WZ17} shows that $G$ acting $k$-acylindrically on a tree $T$ splits as a free pro-$p$ product of its vertex stabilizers. From this observation we infer the following corollary.
\begin{cor}\label{cor:acylindricalaction}
Let $G$ be a finitely generated pro-$p$ group. If there is a profinite tree $T$ on which $G$ acts $k$-acylindrically for some $k\geq 0$ and all the vertex stabilizers satisfy the $p$-Kijima--Nishi property, then $G$ also has the $p$-Kijima--Nishi property.
\end{cor}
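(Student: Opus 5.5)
The plan is to reduce the statement to Proposition~\ref{prop:KNFreeProduct} by means of the structural result quoted just before it. By \cite[Prop.~3.5]{WZ17}, since $G$ acts $k$-acylindrically on the profinite tree $T$, $G$ splits as a free pro-$p$ product of a free pro-$p$ group and of vertex stabilizers. More precisely, there is a set $\{v_j\}_{j\in J}$ of representatives of the $G$-orbits of those vertices of $T$ with nontrivial stabilizer, and a free pro-$p$ subgroup $F\leq G$, such that
\[
G\cong F\ast_p\coprod_{j\in J}G_{v_j}.
\]
The first step is to state this decomposition precisely in the form given in \cite{WZ17}, including the free factor $F$, which arises from the quotient graph $G\backslash T$. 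Here finite generation of $G$ is used: it guarantees that only finitely many factors are nontrivial, or at least that the decomposition is a genuine free pro-$p$ product in the sense of \cite[Ch.~IV, \S~1]{NSW2008}. If the decomposition in \cite{WZ17} is stated over a profinite space of stabilizers rather than a discrete index set, I will use finite generation to pass to a finite set of representatives, since each nontrivial free factor contributes at least one generator to $\rmH^1(G,\FF_p)$.

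With the decomposition in hand, I will check the hypotheses of Proposition~\ref{prop:KNFreeProduct} factor by factor. The free factor $F$ has the $p$-Kijima--Nishi property by Example~\ref{ex:KNFree}. Each $G_{v_j}$ has the property by hypothesis. Trivial factors, if any appear, have trivial cohomology, so the property holds for them vacuously because there are no subgroups of index $p$. The direction ``if all factors have the property, then $G$ has it'' of Proposition~\ref{prop:KNFreeProduct} then gives the conclusion.

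I expect the main obstacle to be the first step, namely extracting from \cite[Prop.~3.5]{WZ17} exactly the free product decomposition in the form needed, with a free factor and one stabilizer per orbit. The cleanest route may be to cite \cite{WZ17} as giving $G$ as the fundamental pro-$p$ group of a finite graph of groups with trivial edge groups. That is exactly a free pro-$p$ product of the vertex groups with a free pro-$p$ group whose rank is the rank of the graph. After that, the argument is a direct application of Proposition~\ref{prop:KNFreeProduct}.
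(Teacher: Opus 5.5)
Your proposal is correct and is essentially the paper's argument. The paper also invokes \cite[Prop.~3.5]{WZ17} to split $G$ as a free pro-$p$ product of vertex stabilizers and then applies Proposition~\ref{prop:KNFreeProduct}; your explicit free factor $F$, handled by Example~\ref{ex:KNFree}, is a harmless refinement of that same route.
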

Finitely generated pro-$p$ subgroups $H$ of the profinite completion $\widehat{G}$ of the fundamental group of a compact $3$-manifold $M$ are examples of finitely generated pro-$p$ groups acting $k$-acylindrically on a profinite tree with action induced by the action of $\widehat{G}$ (see \cite{WZ17}). So, verifying if $H$ satisfies the $p$-Kijima--Nishi property can be reduced to verifying if the $H$-vertex stabilizers satisfy the $p$-Kijima--Nishi property. For a general compact orientable $3$-manifold $M$, \cite[Thm. 1.3]{WZ17} provides a full description of $H$ as well as of the $H$-vertex stabilizers.

For instance, let $M$ be $S^3 \setminus K$ where $K$ is the figure-eight knot. Then $M$ is orientable hyperbolic with cusps (see \cite[\S 4.3]{Thu22}). Let $p$ be a prime and $P$ be a $p$-Sylow subgroup of $\widehat{\pi_1(M)}$. If $H$ is a finitely generated subgroup of $P$, then $H$ has the $p$-Kijima--Nishi property. Indeed, if $H$ is abelian, then it has the $p$-Kijima--Nishi property by Example~\ref{ex:powerful groups} or Example~\ref{ex:cyclic groups} since $\widehat{\pi_1(M)}$ is torsion free. 

Otherwise, by \cite[Thm. 1.2]{WZ17A} it splits as
$$H = \coprod_i A_i$$
where each $A_i$ is a free abelian pro-$p$ group of rank $\leq 2$. Hence, by Proposition~\ref{prop:KNFreeProduct} we see that $H$ has the $p$-Kijima--Nishi property also in this case.

By \cite[Cor. 1.2]{LSZ24} there is at least one Sylow subgroup that is non-abelian.

%%%%%%%%%%%%%%%%%%%%%%%%%%%%%%%%%%%%%%

\subsection{Cyclotomic semidirect products}
The following proposition shows that so-called \emph{$p$-almost direct products} have trivial $\FF_p$-cup radical. A semidirect product $N\rtimes Q$, is called $p$-almost direct, if $Q$ acts trivially on $\rmH^1(N,\FF_p)$. If $N$ is already a pro-$p$ group, this is equivalent to asking that the action of $Q$ on $N/\Phi(N)$ is trivial. Note that any direct product is $p$-almost direct.
\begin{prop}
\label{prop:almost direct product}
    Let $G=N\rtimes Q$ be a $p$-almost direct product of profinite groups such that $\rmH^1(Q,\FF_p)$ and $\rmH^1(N,\FF_p)$ are non-trivial, then $\calR_p(G)=0$.
\end{prop}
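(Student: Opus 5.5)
The plan is to use the Hochschild--Serre spectral sequence of $1\to N\to G\to Q\to 1$ with $\FF_p$-coefficients, and to detect cup products in the graded piece $E_\infty^{1,1}$ of $\rmH^2(G,\FF_p)$.

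\textbf{Degree one.} First I describe $\rmH^1(G,\FF_p)$. Since $G=N\rtimes Q$ is split, a section $s\colon Q\to G$ gives $\res_{G,s(Q)}\circ\inf_{Q,G}=\mathrm{id}$. Hence inflation $\rmH^n(Q,\FF_p)\to\rmH^n(G,\FF_p)$ is injective for every $n$. In particular the transgression $\rmH^1(N,\FF_p)^Q\to\rmH^2(Q,\FF_p)$ in the five-term sequence vanishes. Since $Q$ acts trivially on $\rmH^1(N,\FF_p)$, we have $\rmH^1(N,\FF_p)^Q=\rmH^1(N,\FF_p)$, which gives a short exact sequence
\[
0\to\rmH^1(Q,\FF_p)\xrightarrow{\inf}\rmH^1(G,\FF_p)\xrightarrow{\res}\rmH^1(N,\FF_p)\to 0 .
\]

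\textbf{Degree two.} Next I identify the $(1,1)$-term. Because $Q$ acts trivially on $\rmH^1(N,\FF_p)$, we get $E_2^{1,1}=\rmH^1(Q,\rmH^1(N,\FF_p))=\Hom(Q,\rmH^1(N,\FF_p))$. For nonzero $a\in\rmH^1(Q,\FF_p)$ and $b\in\rmH^1(N,\FF_p)$, the element $a\otimes b\colon q\mapsto a(q)b$ is nonzero in it.
\begin{itemize}
\item There are no incoming differentials, since their sources $E_r^{1-r,r}$ have negative first index.
\item The outgoing differentials land in quotients of $E_2^{3,0}=\rmH^3(Q,\FF_p)$. These vanish because the edge map $\rmH^3(Q,\FF_p)\to\rmH^3(G,\FF_p)$, which is inflation, is injective. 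Injectivity forces all differentials hitting the bottom row to be zero.
\end{itemize}
So $E_\infty^{1,1}=E_2^{1,1}$ is a subquotient of $\rmH^2(G,\FF_p)$, namely $F^1\rmH^2/F^2\rmH^2$. Multiplicativity of the spectral sequence will then show the following. If $a\in\rmH^1(Q,\FF_p)$ and $\beta\in\rmH^1(G,\FF_p)$ with $\res_{G,N}\beta=b$, then $\inf(a)\smallsmile\beta$ lies in $F^1$ and has image $\pm\,a\otimes b$ in $E_\infty^{1,1}$, using $\inf(a)\in F^1\rmH^1=E_\infty^{1,0}$ and $\beta\mapsto b\in E_\infty^{0,1}$.

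\textbf{Conclusion.} Let $\alpha\in\calR_p(G)$ and set $b=\res_{G,N}\alpha$. Pick $a\neq0$ in $\rmH^1(Q,\FF_p)$, which is possible by hypothesis. Then $0=\inf(a)\smallsmile\alpha\mapsto\pm a\otimes b$, so $b=0$. By the exact sequence, $\alpha=\inf(a')$ for some $a'\in\rmH^1(Q,\FF_p)$. Now pick $b\neq0$ in $\rmH^1(N,\FF_p)$, again possible by hypothesis, and lift it to some $\beta$ by surjectivity of restriction. Then $0=\alpha\smallsmile\beta\mapsto\pm a'\otimes b$, so $a'=0$ and hence $\alpha=0$.

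\textbf{Main obstacle.} The main obstacle is rigorously justifying the compatibility of the cup product with the spectral sequence on $E^{1,0}\times E^{0,1}\to E^{1,1}$. If I want to avoid that machinery, I would instead argue concretely. Since $b$ is $Q$-invariant, $M=\ker b$ is normal in $G$ and $G/M\cong C_p\times Q$. Further dividing by $\ker a$ yields a surjection $\pi\colon G\to C_p\times C_p$ through which $\inf(a)$ and $\beta$ factor. I would then show the product is nonzero by restricting to a suitable subgroup. A natural candidate is $\langle s(Q),\,n\rangle$ for some $n\in N$ with $b(n)\neq0$, possibly after passing to its pro-$p$ quotient. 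On it, the class should be detected via a map to $\ZZ_p\times\ZZ_p$ or $C_p\times C_p$, where the relevant products are known to be nonzero (cf.\ Example~\ref{ex:powerful groups}).
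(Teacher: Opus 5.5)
Your proposal is correct and is essentially the paper's argument. Both use the Hochschild--Serre spectral sequence of the split extension and the identification $E_\infty^{1,1}=E_2^{1,1}=\rmH^1(Q,\rmH^1(N,\FF_p))$; you derive $d_2^{1,1}=0$ from injectivity of inflation in degree $3$, where the paper cites \cite[Prop.~2.4.5]{NSW2008}. Both also detect $\inf(a)\smallsmile\beta$ as $\pm a\otimes\res_{G,N}\beta$ via multiplicativity, which the paper likewise uses without comment, so your fallback sketch is not needed.

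Your ordering is slightly cleaner than the paper's displayed formula $\pi(\alpha\smallsmile\beta)=\alpha_Q\otimes\beta_N-\beta_Q\otimes\alpha_N$. You first force $\res_{G,N}\alpha=0$ using $\inf(a)\smallsmile\alpha$, and only then kill the $Q$-component. The paper's formula tacitly ignores the cross term $\alpha_N\smallsmile\beta_N$, which your ordering never has to handle.
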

\begin{proof}
    Consider the following Hochschild--Serre spectral sequence 
    \begin{align*}
        E_2^{s,t}=\rmH^s(Q,\rmH^t(N,\FF_p))\Longrightarrow \rmH^{s+t}(G,\FF_p).
    \end{align*}
    Since $G$ is a semidirect product we get by \cite[Prop. 2.4.5]{NSW2008} that all the differentials $d^{*,1}_2$ are trivial. This implies that 
    \begin{align*}
        \rmH^1(G,\FF_p)\cong \rmH^1(Q,\FF_p)\oplus \rmH^1(N,\FF_p)^Q\cong \rmH^1(Q,\FF_p)\oplus \rmH^1(N,\FF_p).
    \end{align*}
    Note that this splitting is canonical, since $G$ is a semidirect product. Furthermore, on $\rmH^2(G,\FF_p)$ there is a decreasing filtration $F^*\rmH^2(G,\FF_p)$ such that the following sequence is exact:
    \begin{align*}
        0\to \rmH^2(Q,\FF_p)\to F^1\rmH^2(G,\FF_p)\overset{\pi}\to \rmH^1(Q,\rmH^1(N))\to 0
    \end{align*}
    The canonical map $\rmH^1(Q,\FF_p)\otimes \rmH^1(N,\FF_p) \to\rmH^1(Q,\rmH^1(N,\FF_p))$ is injective and thus the same holds for the multiplication map $E_2^{1,0}\otimes E_2^{0,1}\to E_2^{1,1}$.

    Now, assume that $\alpha\in \calR_p(G)$ and write $\alpha=\alpha_Q+\alpha_N$. For every $\beta=\beta_Q+\beta_N\in \rmH^1(G,\FF_p)$ we have $\alpha\smallsmile \beta=0$ and hence in particular $\alpha\smallsmile \beta\in F^1\rmH^2(G,\FF_p)$. We have 
    \begin{align*}
        \pi(\alpha\smallsmile \beta)=\alpha_Q\otimes \beta_N-\beta_Q\otimes \alpha_N.
    \end{align*}
    Now if $\alpha_Q\neq 0$, then we choose $\beta=0+\beta_N$ for some $\beta_N\neq 0$ and if $\alpha_N\neq 0$, we choose $\beta=\beta_Q+0$ for some $\beta_Q\neq 0$. In either of the cases $\pi(\alpha\smallsmile \beta)\neq 0$, contradicting $\alpha \smallsmile \beta=0$. Therefore $\alpha=0$, which finishes the proof.
\end{proof}
\begin{cor}
\label{cor:KNsemidirect}
    Let $G_0$ be a non-trivial pro-$p$ group (not necessarily finitely generated)  and $\theta:G_0\to 1+p\ZZ_p$ an orientation, and set $G=N\rtimes_\theta G_0$ for a free abelian pro-$p$ group $N$. Then $\calR_p(G)=0$, and therefore $G$ satisfies the $p$-Kijima--Nishi property.
\end{cor}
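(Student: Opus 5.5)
The plan is to reduce Corollary~\ref{cor:KNsemidirect} to Proposition~\ref{prop:almost direct product}, and then to invoke Corollary~\ref{cor:trivial R}. Set $Q\coloneq G_0$; the semidirect product $G=N\rtimes_\theta G_0$ then has the form $N\rtimes Q$. Since $N$ is a free abelian pro-$p$ group, I read the statement as assuming $N\neq 1$. Otherwise $G=G_0$ and the claim can fail, as Example~\ref{ex:cyclic groups} shows.

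Three hypotheses of Proposition~\ref{prop:almost direct product} need checking.

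\textbf{Step 1.} We need $\rmH^1(N,\FF_p)\neq 0$. Since $N\neq 1$ is a free abelian pro-$p$ group, $N/N^p$ is a nonzero elementary abelian group, so $\rmH^1(N,\FF_p)=\Hom(N/N^p,\FF_p)\neq0$.

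\textbf{Step 2.} We need $\rmH^1(G_0,\FF_p)\neq 0$. This holds because $G_0$ is a non-trivial pro-$p$ group, so its Frattini quotient $G_0/\Phi(G_0)$ is non-trivial.

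\textbf{Step 3.} We need the product to be $p$-almost direct. The action is $ghg^{-1}=h^{\theta(g)}$ with $\theta(g)\in 1+p\ZZ_p$. Write $\theta(g)=1+p\lambda$. Then $h^{\theta(g)}=h\cdot h^{p\lambda}$, and $h^{p\lambda}\in N^p=\Phi(N)$ because $N$ is abelian. So $G_0$ acts trivially on $N/\Phi(N)$, hence trivially on $\rmH^1(N,\FF_p)$. This is exactly the $p$-almost direct condition recalled before Proposition~\ref{prop:almost direct product}.

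Proposition~\ref{prop:almost direct product} then gives $\calR_p(G)=0$, and Corollary~\ref{cor:trivial R} yields the $p$-Kijima--Nishi property.

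No serious obstacle is expected. The only points needing care are making sure the orientation action really is trivial modulo $\Phi(N)$, which uses $\theta(G_0)\subseteq 1+p\ZZ_p$, and noting that no finite generation hypotheses are needed. Proposition~\ref{prop:almost direct product} is stated for arbitrary profinite groups, and the Frattini argument does not require $N$ or $G_0$ to be finitely generated.
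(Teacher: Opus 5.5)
Your proof is correct and matches the paper's: the paper likewise notes that $\theta(G_0)\subseteq 1+p\ZZ_p$ makes the action of $G_0$ on $\rmH^1(N,\FF_p)$ trivial, and then applies Proposition~\ref{prop:almost direct product}. Two points you make explicit that the paper leaves implicit are the check of both non-vanishing hypotheses and the tacit assumption $N\neq 1$; without it, $G=G_0$ could be $\ZZ_p$, which has $\calR_p(G)=\FF_p$.
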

\begin{proof}
    Since $\theta(G_0)\subseteq 1+p\ZZ_p$ the action of $G_0$ on $\rmH^1(N,\FF_p)$ is trivial and thus the conditions of Proposition~\ref{prop:almost direct product} are satisfied.
\end{proof}

%%%%%%%%%%%%%%%%%%%%%%%%%%%%%%%%%%%%%%%%%%555
%%%%%%%%%%555

\section{The \texorpdfstring{$\FF_p$}{}-cup radical of free profinite constructions}
\label{sec:Radicals of graph fundamental groups}

In this section we will investigate the behavior of the $\FF_p$-cup radical in free profinite constructions and, in particular, address a natural question arising from Proposition \ref{prop:KNFreeProduct} regarding the behavior of these constructions with respect to  the $p$-Kijima--Nishi property. In particular, this section provides a method to construct new examples of groups satisfying the $p$-Kijima--Nishi property.

\subsection{Amalgamated products and $\HNN$-extensions}

For an overview on amalgamated free pro-$p$ products and HNN-extensions of pro-$p$ groups, we direct the reader to \cite[\S 9.2--9.4]{RibesZalesskii2010}.

First, we recall the definition of amalgamated free pro-$p$ product.
\begin{defn}
Let $G_1 = \pres{V_1}{ R_1}$ and $G_2 = \pres{ V_2 }{ R_2}$ be pro-$p$ groups and $H$ be a pro-$p$ group with injective continuous homomorphisms $f_1:H \to G_1$ and $f_2:H \to G_2$. The \emph{free product of $G_1$ and $G_2$ with amalgamated subgroup $H$}, denoted $G_1 \amalg_H G_2$, is defined by the presentation:
$$G_1 \amalg_H G_2 = \pres{V_1 \cup V_2 }{ R_1 \cup R_2,\ f_1(h)=f_2(h) \text{ for all } h \in H}.$$
The amalgamated product is called \emph{proper} if the canonical maps $G_i\to G_1\amalg_HG_2$  are injective.
\end{defn}
If $G=G_1\amalg_HG_2$ is a proper amalgamated product, then one has the following Mayer--Vietoris exact sequence (see \cite[Prop. 9.2.13 (a)]{RibesZalesskii2010}).
\begin{equation*}%\label{eq:MayerVietoris amalg}
\begin{tikzpicture}[baseline=(current bounding box.center), descr/.style={fill=white,inner sep=2pt}]
        \matrix (m) [
            matrix of math nodes,
            row sep=3.5em,
            column sep=2.6em,
            text height=1.5ex, text depth=0.25ex
        ]
        { 0\to\rmH^1(G,\F_p) &\rmH^1(G_1,\F_p)\oplus\rmH^1(G_2,\F_p) &\rmH^1(H,\F_p)\\
         \rmH^2(G,\FF_p) & \rmH^2(G_1,\FF_p)\oplus\rmH^2(G_2,\FF_p) &  \cdots \\
           };

        \path[overlay,->, font=\scriptsize,>=latex]
        (m-1-1) edge node[auto] {$r_1$} (m-1-2)
        (m-1-2) edge node[auto] {$\rho^1$} (m-1-3)
        (m-1-3) edge[out=355,in=175] node[descr,yshift=0.3ex] {$\delta_1$} (m-2-1)
        (m-2-1) edge node[auto] {$r_2$} (m-2-2) 
        (m-2-2) edge node[auto] {$\rho^{2}$} (m-2-3);
\end{tikzpicture}
\end{equation*}
Here, the maps $r^n$ and $\rho^n$ are defined as $\res^n_{G,G_1}\oplus \res_{G,G_2}^n$ and $\res^n_{G_1,H}-\res^n_{G_2,H}$, respectively.
\begin{prop}\label{prop:amalg}
	 Let $G=G_1\amalg_H G_2$ be a proper free amalgamated pro-$p$ product and  suppose that the restriction maps $\rmH^1(G_i,\FF_p)\to \rmH^1(H,\FF_p)$ for $i=1,2$ are surjective.
	 Then, the diagram
     \begin{equation*}
     \xymatrix{
	       \calR_p(G)\ar[d]_{\res^1_{G,G_1}} \ar[r]^{\res^1_{G,G_2}}&\calR_p(G_2)\ar[d]^{\res^1_{G_2,H}}\\
	       \calR_p(G_1)\ar[r]_{\res^1_{G_1,H}}&\rmH^1(H,\FF_p)
	   }
     \end{equation*}is a pull-back square.
\end{prop}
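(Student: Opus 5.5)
The plan is to read everything off the Mayer--Vietoris sequence for the proper amalgam $G=G_1\amalg_H G_2$. The key observation is that the surjectivity hypothesis on the restrictions $\rmH^1(G_i,\FF_p)\to\rmH^1(H,\FF_p)$ forces $\rho^1$ to be surjective. Indeed, already $\res^1_{G_1,H}$ alone hits every class of $\rmH^1(H,\FF_p)$, so $\rho^1(\gamma,0)$ does. By exactness, $\delta_1=0$, and hence $r_2\colon\rmH^2(G,\FF_p)\to\rmH^2(G_1,\FF_p)\oplus\rmH^2(G_2,\FF_p)$ is injective. Together with the injectivity of $r_1$, this is what makes the square a pull-back.

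First I would check that the diagram makes sense, i.e.\ that $\res^1_{G,G_i}$ maps $\calR_p(G)$ into $\calR_p(G_i)$. By Lemma~\ref{lem:maps between radicals}~\ref{it:lem maps between radicals res}, it suffices to show that $\res^1_{G,G_1}\colon\rmH^1(G,\FF_p)\to\rmH^1(G_1,\FF_p)$ is surjective, and symmetrically for $G_2$. Exactness at the first two terms identifies $\rmH^1(G,\FF_p)$, via $r_1$, with
\[
\set{(\alpha_1,\alpha_2)}{\res^1_{G_1,H}(\alpha_1)=\res^1_{G_2,H}(\alpha_2)}.
\]
Given $\alpha_1\in\rmH^1(G_1,\FF_p)$, surjectivity of $\res^1_{G_2,H}$ provides $\alpha_2$ with the same restriction to $H$. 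The pair $(\alpha_1,\alpha_2)$ then comes from some $\alpha\in\rmH^1(G,\FF_p)$, and $\res^1_{G,G_1}(\alpha)=\alpha_1$. Commutativity of the square is clear, since both composites equal $\res^1_{G,H}$ by transitivity of restriction.

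Next I would verify the pull-back property, namely that the induced map
\[
\calR_p(G)\longrightarrow \calR_p(G_1)\times_{\rmH^1(H,\FF_p)}\calR_p(G_2)
\]
is bijective. Injectivity is immediate from the injectivity of $r_1$. For surjectivity, take $\alpha_i\in\calR_p(G_i)$ with equal restrictions to $H$. Exactness gives $\alpha\in\rmH^1(G,\FF_p)$ with $r_1(\alpha)=(\alpha_1,\alpha_2)$, and it remains to show $\alpha\in\calR_p(G)$. For any $\beta\in\rmH^1(G,\FF_p)$, write $r_1(\beta)=(\beta_1,\beta_2)$. Since restriction commutes with cup products,
\[
r_2(\alpha\smallsmile\beta)=(\alpha_1\smallsmile\beta_1,\ \alpha_2\smallsmile\beta_2)=(0,0),
\]
because $\alpha_i\in\calR_p(G_i)$. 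By the injectivity of $r_2$ established above, $\alpha\smallsmile\beta=0$, so $\alpha\in\calR_p(G)$.

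The only step needing real care is the use of exactness at $\rmH^1(H,\FF_p)$ and at $\rmH^2(G,\FF_p)$ to get $\ker r_2=\im\delta_1=0$. This relies on the Mayer--Vietoris sequence of \cite[Prop.~9.2.13]{RibesZalesskii2010} for proper amalgams, including its degree-two part; everything else is formal.
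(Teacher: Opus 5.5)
Your proof is correct and follows essentially the same route as the paper. Surjectivity of $\res^1_{G_i,H}$ gives surjectivity of both $\res^1_{G,G_i}$ (so the radicals map correctly) and $\rho^1$, hence $r^2$ is injective by Mayer--Vietoris, and any lift $\alpha$ of a compatible pair in $\calR_p(G_1)\oplus\calR_p(G_2)$ satisfies $r^2(\alpha\smallsmile\beta)=0$, so it lies in $\calR_p(G)$.
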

\begin{proof}
    First, notice that the assumption on the surjectivity of $\res_{G_i,H}^1$, $i=1,2$ implies that also the maps $\res_{G,G_i}^1$, $i=1,2$, and $\rho^1$ are surjective. Indeed, fix $i,j\in\{1,2\}$, $i\neq j$. For $\alpha\in \rmH^1(G_i,\FF_p)$, pick $\alpha'\in \rmH^1(G_j,\FF_p)$ such that $\res_{G_j,H}^1(\alpha')=\res^1_{G_i,H}(\alpha)$ --- such an $\alpha'$ exists as $\res_{G_j,H}^1$ is surjective. Thus, $$\alpha+\alpha'\in\ker(\rho^1)=\im(r^1),$$ and $\alpha\in\im(\res_{G,G_i}^1)$. Similarly, for $\beta\in\rmH^1(H,\FF_p)=\im(\res_{G_i,H}^1)$ pick $\alpha\in\rmH^1(G_i,\FF_p)$ such that $\alpha\vert_H=\beta$ --- then $\beta=\rho^1(\alpha,0)\in\im(\rho^1)$.
    
    From the former argument, it is easy to deduce that $\res^1_{G,G_i}(\calR_p(G))\subseteq \calR_p(G_i)$ for $i=1,2$ and thus $r^1$ restricted to $\calR_p(G)$ yields an inclusion 
    $$r^1\vert_{\calR_p(G)}\colon\calR_p(G)\hookrightarrow \calR_p(G_1)\oplus \calR_p(G_2).$$ 
    
    Thus, it remains to show that
    \begin{align*}
        r^1(\calR_p(G))\supseteq (R_p(G_1)\oplus R_p(G_2))\cap \ker(\rho^1).
    \end{align*}
    For that let $\alpha_1+\alpha_2\in \calR_p(G_1)\oplus  \calR_p(G_2)$ be in the kernel of $\rho^1$.
    By the Mayer--Vietoris exact sequence, $\alpha_1=\alpha\vert_{G_1}$ and $\alpha_2=\alpha\vert_{G_2}$ for some $\alpha\in \rmH^1(G,\FF_p)$. We show that $\alpha\in \calR_p(G)$. For $\beta\in \rmH^1(G,\FF_p)$ arbitrary, we infer
    \begin{align*}
        r^2(\alpha\smallsmile \beta)=\res_{G,G_1}^1(\alpha)\smallsmile \res_{G,G_1}^1(\beta)+\res_{G,G_2}^1(\alpha)\smallsmile \res_{G,G_2}^1(\beta)=0.
    \end{align*}
Since $\rho^1$ is surjective, $r^2$ is injective by the Mayer--Vietoris exact sequence, so that $\alpha\smallsmile \beta=0$. Since $\beta$ was arbitrary, it follows that $\alpha\in \calR_p(G)$.
\end{proof}
The following example shows that the conditions on the restriction maps posed in Proposition~\ref{prop:amalg} are necessary. It also exhibits a new example of a group, that doesn't have the $p$-Kijima--Nishi property. 
\begin{ex}
	For $p \neq 2$, let $G_1=\pres{x,y}{[x,y]=1}\simeq \ZZ_p^2\simeq G_2=\pres{u,v}{[u,v]}$ and $H$ be the subgroup of $G_2$ generated by $u$, and let $H\to G_1$ be the map $u\mapsto y^p$. Since $H$ is procyclic, the amalgamated free pro-$p$ product $G=G_1\amalg_H G_2$ is proper.
	
 	By the Mayer--Vietoris sequence, we deduce that $\dim\rmH^1(G,\FF_p)=3$, and that $\dim\rmH^2(G,\FF_p)=2$. Thus \[G=\pres{x,y,v}{[x,y]=[v,y^p]=1}\] is a minimal presentation of $G$. 
 	
 	Since $\calR_p(G_i)=0$, the pull-back of these spaces is trivial, while Proposition~\ref{prop:cup product and shape of relations} implies that $\calR_p(G)=\FF_p\cdot v^\ast$ is one-dimensional. Notice the hypotheses of Proposition~\ref{prop:amalg} are not satisfied, as $\res^1_{G_2,H}=0$.

 	Consider now $U=\ker(\phi)$, where $\phi:G\to \Z/p$ is defined by $\phi(x)=\phi(v)=0$ and $\phi(y)=1$. 
 	Then, $U$ is generated as a pro-$p$ group by $x,y^p$, and by the elements $w_h:=v^{y^h}$, $h=0,\dots,p-1$. In fact one has the following minimal presentation:
    \[U=\pres{x,y^p,w_0,\dots,w_{p-1}}{[x,y^p]=[w_h,y^p]=1,\ h=0,\dots,p-1}\]
    Verifying that it is indeed a minimal presentation is quite involved. From that one concludes that $U$ is isomorphic to the right-angled Artin pro-$p$ group on a connected graph with $p+2$ vertices, and hence $\calR_p(U)=0$ by Lemma \ref{lem:radical of RAAG}. In particular, $G$ does not satisfy the $p$-Kijima--Nishi property. 
\end{ex}
Next, we recall the definition of HNN-extensions of pro-$p$ groups.
\begin{defn}
Let $G_0 = \pres{V }{ R}$ be a pro-$p$ group, $H \subseteq G_0$ a closed subgroup and $f:H \to G_0$ an injective continuous homomorphism. The \emph{HNN-extension of $G_0$ with associated subgroup $H$ via $f$}, denoted $\HNN_f(G_0,H,t)$, is defined by the presentation:
$$\HNN_f(G_0,H,t) = \pres{ V, t}{R, tht^{-1} = f(h) \text{ for all } h \in H },$$
where the distinguished element $t$ is called the \emph{stable letter}.

If the canonical homomorphism $G_0\to \HNN_f(G_0,H,t)$ is injective, we say that the HNN-extension is \emph{proper}.
\end{defn}
If $G=\HNN_f(G_0,H,t)$ is proper, there exists a Mayer--Vietoris exact sequence
\begin{equation*}
\begin{tikzpicture}[baseline=(current bounding box.center), descr/.style={fill=white,inner sep=2pt}]
        \matrix (m) [
            matrix of math nodes,
            row sep=3.5em,
            column sep=2.6em,
            text height=1.5ex, text depth=0.25ex
        ]
        { 0\to\FF_p &\rmH^1(G,\F_p) &\rmH^1(G_0,\F_p) &\rmH^1(H,\F_p)\\
         &\rmH^2(G,\FF_p) & \rmH^2(G_0,\FF_p) &  \cdots \\
           };

        \path[overlay,->, font=\scriptsize,>=latex]
        (m-1-1) edge node[auto] {$\delta_0$} (m-1-2)
        (m-1-2) edge node[auto] {$r_1$} (m-1-3)
        (m-1-3) edge node[auto] {$\phi_f^1$} (m-1-4)
        (m-1-4) edge[out=355,in=175] node[descr,yshift=0.3ex] {$\delta_1$} (m-2-2)
        (m-2-2) edge node[auto] {$r_2$} (m-2-3) 
        (m-2-3) edge node[auto] {$\phi_f^{2}$} (m-2-4);
\end{tikzpicture}
\end{equation*}
where $r^n=\res_{G,G_0}^n$ and $\phi^n_f\coloneq \res^n_{G_0,H}-f^\ast\circ\res^n_{G_0,f(H)}$ (cf. \cite[Prop. 9.4.2 (a)]{RibesZalesskii2010}).

\begin{prop}\label{prop:HNN}
	Let $G=\HNN_f(G_0,H,t)$ be a proper pro-$p$ HNN-extension such that the restriction mapping $\res^1_{G_0,H}:\rmH^1(G_0,\FF_p)\to \rmH^1(H,\FF_p)$ is surjective, and the following triangle commutes: 
		\[\xymatrixcolsep{0cm}\xymatrix{
			\rmH^1(G_0,\FF_p)\ar[rr]^{\res^1_{G_0,H}}\ar[rd]^{\res^1_{G_0,f(H)}}&&\rmH^1(H,\FF_p)\\
			& \rmH^1(f(H),\FF_p)\ar[ru]_{f^\ast}&
		}\]
    Then $r^1$ induces an inclusion $\calR_p(G)\hookrightarrow \calR_p(G_0)\cap \ker(\res^1_{G_0,H})$. 
\end{prop}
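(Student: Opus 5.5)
The plan is to work entirely with the Mayer--Vietoris sequence for the proper HNN-extension. The first step is to read off what the hypotheses say about that sequence.

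\textbf{Step 1: consequences of the hypotheses.} Commutativity of the triangle means exactly that $\phi^1_f=\res^1_{G_0,H}-f^\ast\circ\res^1_{G_0,f(H)}=0$. Exactness then gives three facts.
\begin{enumerate}[label=(\alph*)]
\item The map $r^1=\res^1_{G,G_0}$ is surjective.
\item The connecting map $\delta_1\colon\rmH^1(H,\FF_p)\to\rmH^2(G,\FF_p)$ is injective.
\item $\ker(r^1)=\im(\delta_0)=\FF_p\cdot t^\ast$, where $t^\ast\in\rmH^1(G,\FF_p)$ is the homomorphism that kills $G_0$ and sends the stable letter $t$ to $1$.
\end{enumerate}
From (a) and Lemma~\ref{lem:maps between radicals}~\ref{it:lem maps between radicals res}, applied to the inclusion $G_0\hookrightarrow G$, we immediately get $r^1(\calR_p(G))\subseteq\calR_p(G_0)$.

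\textbf{Step 2: the key identity.} It remains to prove two things: that $r^1$ is injective on $\calR_p(G)$, and that $\res^1_{G,H}(\alpha)=0$ for every $\alpha\in\calR_p(G)$. Both should follow from one formula relating $t^\ast$ to $\delta_1$:
\[
t^\ast\smallsmile\beta=\pm\,\delta_1\big(\res^1_{G,H}(\beta)\big)\qquad\text{for all }\beta\in\rmH^1(G,\FF_p).
\]
This step is the main obstacle, since the paper only quotes the Mayer--Vietoris sequence and never describes $\delta_1$.

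To establish the identity, I would realize the sequence as the long exact sequence attached to the standard tree of $G$, namely
\[
0\to\FF_p[\![G/H]\!]\to\FF_p[\![G/G_0]\!]\to\FF_p\to0,
\]
dualized, with Shapiro's lemma identifying the middle terms. With this description, $\delta_0(1)=t^\ast$ should be the class of the resulting extension cocycle. Cup product with $\beta$ commutes with connecting homomorphisms, because the sequence splits over $\FF_p$ and cupping with $\beta$ is a map of long exact sequences. Using also that Shapiro's isomorphism is compatible with restriction, one gets $\delta_1(\beta|_H)=\delta_1(1\smallsmile\beta|_H)=\delta_0(1)\smallsmile\beta$ up to sign.

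If this route becomes too heavy, I would fall back on a direct cochain check. A $1$-cocycle on $G$ is determined by its values on $G_0$ and on $t$, and the connecting map can be computed from the HNN relation $tht^{-1}=f(h)$. In that relation, the commuting triangle is exactly what makes the correction terms cancel.

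\textbf{Step 3: conclusion.} Let $\alpha\in\calR_p(G)$. Then $0=t^\ast\smallsmile\alpha=\pm\delta_1(\alpha|_H)$, so $\alpha|_H=0$ by the injectivity of $\delta_1$ from Step~1(b). Since $\alpha|_H=(\alpha|_{G_0})|_H$, this shows $r^1(\alpha)\in\ker(\res^1_{G_0,H})$.

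For injectivity, suppose $\alpha\in\calR_p(G)\cap\ker(r^1)$, so that $\alpha=c\,t^\ast$ for some $c\in\FF_p$. Since $\res^1_{G_0,H}$ is surjective and $r^1$ is surjective, there is $\beta\in\rmH^1(G,\FF_p)$ with $\beta|_H\neq0$. Then $t^\ast\smallsmile\beta=\pm\delta_1(\beta|_H)\neq0$, so $t^\ast\notin\calR_p(G)$, which forces $c=0$.

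This last step needs $\rmH^1(H,\FF_p)\neq0$, i.e.\ $H\neq1$. The statement genuinely needs this: if $H=1$ then $G=G_0\ast_p\ZZ_p$, and $t^\ast$ lies in $\calR_p(G)$ by Lemma~\ref{lem:freeprod}, so $r^1$ is not injective on $\calR_p(G)$. I would therefore make the standing assumption $H\neq1$ explicit.
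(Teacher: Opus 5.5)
Your proof is correct. Its skeleton matches the paper's. From $\phi^1_f=0$ you get that $r^1$ is surjective, that $\delta_1$ is injective, and that $\ker(r^1)=\FF_p t^\ast$. Lemma~\ref{lem:maps between radicals} gives $r^1(\calR_p(G))\subseteq\calR_p(G_0)$. Both the inclusion into $\ker(\res^1_{G_0,H})$ and the injectivity then follow once you know that $t^\ast\smallsmile\beta\neq0$ whenever $\res^1_{G,H}(\beta)\neq0$.

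The genuine difference is how you obtain this last fact.
\begin{itemize}
\item The paper reads it off the presentation. The triangle forces $h^{-1}f(h)\in\Phi(G_0)$, so each relation $tht^{-1}=f(h)$ is a commutator with $t$ modulo $\Phi(G_0)$. It then invokes Proposition~\ref{prop:cup product and shape of relations}.
\item You prove $t^\ast\smallsmile\beta=\pm\delta_1(\res^1_{G,H}(\beta))$ from the long exact sequence of the standard pro-$p$ tree, using that connecting maps and Shapiro's isomorphism are compatible with cup products. You then use injectivity of $\delta_1$. This computation is sound.
\end{itemize}

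Your route is more general. Proposition~\ref{prop:cup product and shape of relations} is stated only for finitely presented groups and only for pairs of dual basis elements. The paper's step therefore tacitly needs finite presentability, and the finer pairing between cup products and relations, to handle an arbitrary $\beta$. Your identity works for arbitrary $G_0$, $H$ and $\beta$. The paper's route is shorter and more concrete when a presentation is available.

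Two small corrections to your write-up.
\begin{itemize}
\item Your identity holds for every proper HNN-extension, since $\beta(f(h))=\beta(tht^{-1})=\beta(h)$ automatically for $\beta\in\rmH^1(G,\FF_p)$. So the triangle is used only through $\ker\delta_1=\im\phi^1_f=0$ and the surjectivity of $r^1$. It is not what cancels correction terms, as your fallback suggests.
\item Derive Step~1 from the tree sequence itself, whose middle map is $\pm\phi^1_f$. Then the $\delta_1$ you show injective is literally the one in your identity.
\end{itemize}

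Your caveat about $H=1$ is correct, and it applies to the paper as well. In that case all hypotheses hold and $G=G_0\ast_p\ZZ_p$. By Lemma~\ref{lem:freeprod}, $t^\ast\in\calR_p(G)$, so the paper's claim $\tau\notin\calR_p(G)$ fails and $r^1$ is not injective on $\calR_p(G)$. The same degenerate case also contradicts Theorem~\ref{thm:fundamentalgroupgeneral}: take a single vertex group $\ZZ_p^2$ with a loop whose edge group is trivial. So the assumption $H\neq1$ is needed there too.
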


\begin{proof}
Since $\phi^1_f=\res^1_{G_0,H}-f^*\res^1_{G_0,f(H)}=0$, the Mayer--Vietoris exact sequence yields the two exact sequences 
	\begin{align*}
        \begin{split}
            \xymatrix{0 \ar[r] & \FF_p\ar[r]^-{\delta^0} & \rmH^1(G,\FF_p)\ar[r]^-{r^1}& \rmH^1(G_0,\F_p)\ar[r] & 0},\\
            \xymatrix{ 0\ar[r]& \rmH^1(H,\FF_p)\ar[r]^-{\delta ^1}& \rmH^2(G,\F_p)\ar[r]^-{r^2} &\rmH^2(G_0,\F_p)}.
        \end{split}
	\end{align*}
    Let $\tau\in \rmH^1(G,\FF_p)$ be the dual to the stable letter $t\in G$, then $\tau$ is a generator of $\im(\delta^0)=\ker(r^1)$, since $r^1(\tau)=0$. Notice that $\tau\not\in \calR_p(G)$, since for every $h\in H$ one has a relation $[h,t]\in \Phi(G_0)$ and thus, by Proposition~\ref{prop:cup product and shape of relations}, we infer $\tau\smallsmile \alpha\neq 0$ for each $\alpha\in \rmH^1(G,\FF_p)$ such that $\res_{G,H}^1(\alpha)\neq 0$. This also shows that $\calR_p(G)\subseteq \ker(\res_{G,H}^1)$.

    From the surjectivity of $r^1$ we deduce that $r^1(\calR_p(G))\subseteq \calR_p(G_0)$ using Lemma~\ref{lem:maps between radicals} \ref{it:lem maps between radicals res}, and since $\ker(r^1)=\FF_p\tau$ intersects $\calR_p(G)$ trivially, the induced map $$\calR_p(G)\longrightarrow \calR_p(G_0)\cap \ker(\res_{G_0,H}^1)$$ is injective as claimed.
\end{proof}
\begin{rem}
    The inclusion in Proposition~\ref{prop:HNN} is in general strict, as one can see for $G_0$ the free pro-$p$ group generated by $x,y,a$, $H=\langle a\rangle\cong \ZZ_p $ and $f:H\to G_0$ defined by $a\mapsto a[x,y]$. Setting $G\coloneq \HNN_f(G_0,H,t)$, it is not hard to see that $G$ is a Demu\v{s}kin group. From this one can deduce that the HNN-extension is proper. Furthermore, it readily follows that $\calR_p(G)=0$, whereas  $\calR_p(G_0)\cap \ker(\res_{G_0,H}^1)=\FF_px^*\oplus \FF_py^*$.
\end{rem}
Similar to before, we give an example that shows that the surjectivity of the restriction map in Proposition~\ref{prop:HNN} is necessary:
\begin{ex}
    Let $G_0=\ZZ_p^2$ generated by $x$ and $y$. Let $H\cong \ZZ_p$ be the subgroup generated by $x^p$ and define $f:H\to G_0$ by $f(x^p)=y^p$. Then the restriction maps $\res^1_{G_0,H}$ and $\res^1_{G_0,f(H)}$ are both trivial. Now define $G=\HNN_f(G_0,H,t)$. From the Mayer--Vietoris sequence one deduces that 
    \begin{align*}
        G=\pres{x,y,t}{[x,y]=1,[x^p,t]=(x^{-1}y)^p}
    \end{align*}
    is a minimal presentation. From that we conclude that $\calR_p(G)=\FF_p
    \cdot t^*\neq 0$. This shows that the statement of Proposition~\ref{prop:HNN} cannot be true in this context, as $\calR_p(G_0)=0$.
\end{ex}
\subsection{Fundamental groups of graphs of pro-$p$ groups}
\label{ssec:Fundamental Groups}

Previously in this section, we analyzed the structure of the $\FF_p$-cup radical in amalgamated pro-$p$ products and pro-$p$ \HNN-extensions. Both are special cases of a more general construction, originally due to H. Bass and J-P. Serre~\cite{Serre2003} for abstract groups, and to D.~Gildenhuys, O.~Mel'nikov, L.~Ribes and P.~Zalesskii \cite[\S 6.7]{Rib17} in the profinite setting. A natural next step, therefore, is to understand the behavior of the $\FF_p$-cup radical in this construction, namely, the {\it fundamental pro-$p$ group of a finite graph of pro-$p$ groups}.

We adopt Serre's definition of a finite graph. While the definition by L. Ribes and P. Zalesskii (see \cite{SSS12}) yields an oriented graph that requires introducing formal symbols to represent the reverse direction of an edge, Serre's definition inherently includes both directions while keeping the construction independent of a chosen orientation.

\begin{defn}
{\it A finite (Serre) graph} $\Gamma$ consists of a finite set $V(\Gamma)$, a finite set $E(\Gamma)$ and two maps
$$E(\Gamma) \to V(\Gamma) \times V(\Gamma), \quad e \mapsto (d_0(e), d_1(e))\quad 
\text{and}\quad 
E(\Gamma) \to E(\Gamma), \quad e \mapsto \bar{e}$$
satisfying the following conditions: for each $e \in E(\Gamma)$ we have $\bar{\bar{e}} = e$, $\bar{e} \neq e$ and $d_0(e) = d_1(\bar{e})$. A finite graph is {\it connected} if any two vertices can be connected by a path $e_1\cdots e_n$, i.e., they are extremities of a sequence of adjacent edges; a connected graph is a {\it tree} if it has no reduced closed path $e_1\cdots e_n$, i.e., a path starting and ending at the same vertex such that $e_{i+1} \neq \bar{e}_i$ for all $i=1,\dots,n-1$. 
\end{defn}

\begin{defn}
A {\it finite graph of pro-$p$ groups} is a pair $(\calG,\Gamma)$ where $\Gamma$ is a connected finite Serre graph, $\calG$ is a family of pro-$p$ groups $\calG(m)$ for each $m \in \Gamma$ such that $\calG(e) = \calG(\bar{e})$ and, for each edge $e \in E(\Gamma)$, there are continuous monomorphisms $\partial_i = \partial_{i,e}: \calG(e) \to \calG(d_i(e))$, for $i=0,1$, and $\partial_{0,e} = \partial_{1,\bar{e}}$.
\end{defn}

\begin{defn}
Let $(\calG,\Gamma)$ be a finite graph of pro-$p$ groups and $\mathrm T$ be a maximal subtree of $\Gamma$. Then the {\it fundamental pro-$p$ group} of $(\calG,\Gamma)$ is defined by
$$\Pi_1(\calG,\Gamma) = \bigg(F\amalg \coprod_{v \in V(\Gamma)} \calG(v)\bigg)/N$$
where $F$ is the free pro-$p$ group with basis $\set{t_e}{ e \in E(\Gamma)}$ and $N$ is the closed normal closure of the set
$$\set{t_e}{e \in E(\mathrm T)} \cup \set{t_{\bar{e}}t_e}{e \in E(\Gamma)} \cup \set{\partial_{0,e}(x)^{-1}t_e\partial_{1,e}(x)t_e^{-1}}{ x \in \calG(e), e \in E(\Gamma)}.$$
Note that the image of $t_e\in \Pi_1(\calG,\Gamma)$ is trivial when $e\in E(\mathrm T)$, and $\Pi_1(\calG,\Gamma)$ is independent of the choice of $\mathrm T$, up to isomorphism.
\end{defn}

\begin{rem}
Unlike in the abstract case, a vertex group may not embed into the fundamental pro-$p$ group. When the embedding occurs, we will say that the finite graph of pro-$p$ groups is {\it proper}. Replacing the vertex groups with their images, we can construct another finite graph of pro-$p$ groups (over the same original graph) which is proper and whose fundamental pro-$p$ group is isomorphic to the original. For a more detailed explanation, we refer to \cite[\S~6.4]{Rib17}.
\end{rem}

The fundamental pro-$p$ group of a finite graph of pro-$p$ groups generalizes many free pro-$p$ constructions. For example, let $(\calG,\Gamma)$ be the graph of groups depicted as follows
$$\begin{tikzpicture}[auto]
\node[circle,draw=black, fill=black, inner sep=0pt, minimum size=1.5mm] at (0,0) {};
\node[circle,draw=black, fill=black, inner sep=0pt, minimum size=1.5mm] at (2,0) {};
\draw node[above left] {$\calG(v)$} (0,0) -- node[below] {$\calG(e)$} (2,0) node[above right] {$\calG(w)$};
\end{tikzpicture}$$
with continuous monomorphisms $\calG(e)\to \calG(v)$ and $\calG(e)\to \calG(w)$. Then one finds $\Pi_1(\calG,\Gamma)\cong \calG(v) \amalg_{\calG(e)} \calG(w)$ and the amalgamated product is proper if and only if $(\calG,\Gamma)$ is proper.

If we consider the following graph of groups $(\calG,\Gamma)$ 
\begin{center}
    \begin{tikzpicture}[scale=1.2]
    \tikzset{every loop/.style={}}
    \clip (-2.5,-.5) rectangle + (3.7,1);
    \node[circle,draw=black, fill=black, inner sep=0pt, minimum size=1.5mm] at (0,0) {};
    \path[-] (0,0) edge[scale=5,in=-150,out=-210,loop] node[left ] {$\calG(e)$} (0,0) node[right] {$\calG(v)$};
    \end{tikzpicture}
\end{center}
with continuous morphisms $\partial_i:\calG(e)\to \calG(v)$. Define $f:\partial_1(\calG(e))\to \calG(v)$ by $f(\partial_1(x))=\partial_0(x)$. Then we get 
\begin{align*}
    \Pi_1(\calG,\Gamma)\cong \HNN_f(\calG(v),\partial_1(\calG(e)),t)
\end{align*}
and similar to above $(\calG,\Gamma)$ is proper if and only if the HNN-extension is proper.

The fundamental pro-$p$ group of a finite graph of pro-$p$ groups $(\calG,\Gamma)$ can be obtained by choosing a maximal subtree $\mathrm T$ of $\Gamma$, taking the iterated amalgamated free pro-$p$ products of vertex groups along the edge groups and taking \HNN-extensions for each remaining loop.
Hence, Propositions~\ref{prop:amalg} and \ref{prop:HNN} applied successively prove the following theorem.

\begin{thm}\label{thm:fundamentalgroupgeneral}
Let $(\calG, \Gamma)$ be a proper finite graph of pro-$p$ groups. Let $G = \Pi_1(\calG, \Gamma)$, let $T \subset \Gamma$ be a maximal tree, and denote $G_T = \Pi_1(\calG, T)$. Suppose that:
\begin{enumerate}[label=(\arabic*)]
\item \label{it:thm fundamental groups Gv trivial radical} For all $v \in V(\Gamma)$, $\calR_p(\calG(v)) = 0$.

\item \label{it:thm fundamental groups restriction maps surjective} For all $e \in E(\mathrm T)$, the restriction maps $\partial_{i,e}^{(1)}$ are surjective for $i = 0,1$.

\item \label{it:thm fundamental groups diagram commuatative} For all $e \not\in E(\mathrm T)$, the following square commutes:
\begin{equation*}
    \begin{tikzcd}[column sep=small]
        \rmH^1(G_T,\FF_p)\arrow[r]\arrow[d]&\rmH^1(\calG(d_0(e)),\FF_p)\arrow[d]\\
        \rmH^1(\calG(d_1(e)),\FF_p)\arrow[r]&\rmH^1(\calG(e),\FF_p)
    \end{tikzcd}
\end{equation*}
and the induced map $\rmH^1(G_T, \FF_p) \to \rmH^1(\calG(e), \FF_p)$ is surjective.
 \end{enumerate}
Then $\calR_p(G) = 0$.
\end{thm}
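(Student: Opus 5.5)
We argue in two stages, following the remark preceding the statement: first build $G_T$ as an iterated amalgamated product along the maximal tree $\mathrm T$, then obtain $G$ from $G_T$ by successive HNN-extensions, one for each pair $\{e,\bar e\}$ of edges outside $\mathrm T$. Throughout we use properness of $(\calG,\Gamma)$ to guarantee that all intermediate amalgamated products and HNN-extensions are proper. Each intermediate group is the fundamental group of a subgraph of groups, and vertex groups of the full graph embed into it, hence into the intermediate groups as well.

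\emph{Stage 1 (the tree).} We induct on the number of vertices of $\mathrm T$. Remove a leaf $v$ of $\mathrm T$ attached by an edge $e$ to the subtree $\mathrm T'$. Then $G_{\mathrm T}=G_{\mathrm T'}\amalg_{\calG(e)}\calG(v)$, and by induction $\calR_p(G_{\mathrm T'})=0$. To apply Proposition~\ref{prop:amalg} we need surjectivity of $\rmH^1(G_{\mathrm T'},\FF_p)\to\rmH^1(\calG(e),\FF_p)$ and of $\rmH^1(\calG(v),\FF_p)\to \rmH^1(\calG(e),\FF_p)$. The second is hypothesis~\ref{it:thm fundamental groups restriction maps surjective}. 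For the first, the restriction factors as
\[
\rmH^1(G_{\mathrm T'},\FF_p)\to\rmH^1(\calG(w),\FF_p)\to\rmH^1(\calG(e),\FF_p),
\]
where $w$ is the other endpoint of $e$. The second map is surjective by~\ref{it:thm fundamental groups restriction maps surjective}. The first is surjective by the argument at the start of the proof of Proposition~\ref{prop:amalg}, which shows that restriction from an amalgam to a factor is surjective; applying this iteratively along the inductive construction of $G_{\mathrm T'}$ gives surjectivity of $\rmH^1(G_{\mathrm T'},\FF_p)\to\rmH^1(\calG(w),\FF_p)$. I would make this part of the inductive statement: \emph{$\calR_p(G_{\mathrm T})=0$ and restriction to every vertex group is surjective.} Proposition~\ref{prop:amalg} then says that $\calR_p(G_{\mathrm T})$ is the pull-back of two trivial spaces, hence is $0$.

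\emph{Stage 2 (the loops).} Enumerate the non-tree edge pairs $e_1,\dots,e_k$ and set $G^{(j)}$ to be the fundamental group of $\mathrm T\cup\{e_1,\dots,e_j\}$, so that $G^{(j)}=\HNN_f(G^{(j-1)},\partial_1\calG(e_j),t_j)$ with $f=\partial_0\circ\partial_1^{-1}$. We apply Proposition~\ref{prop:HNN} to conclude
\[
\calR_p(G^{(j)})\hookrightarrow\calR_p(G^{(j-1)})=0.
\]
For this we need two things about the maps $\rmH^1(G^{(j-1)},\FF_p)\to\rmH^1(\calG(e_j),\FF_p)$ via $\partial_0$ and via $\partial_1$: they must agree (commutativity of the triangle), and they must be surjective. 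Hypothesis~\ref{it:thm fundamental groups diagram commuatative} gives exactly this for $G_{\mathrm T}$ in place of $G^{(j-1)}$. Since in each HNN-step the first exact sequence in the proof of Proposition~\ref{prop:HNN} shows that $\rmH^1(G^{(j)},\FF_p)\to\rmH^1(G^{(j-1)},\FF_p)$ is surjective, composing down to $\rmH^1(G_{\mathrm T},\FF_p)$ shows that every class on $G^{(j-1)}$ is inflated from $G_{\mathrm T}$ through this chain. Hence commutativity and surjectivity transfer from $G_{\mathrm T}$ to $G^{(j-1)}$.

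The main obstacle is this compatibility bookkeeping: identifying $\calG(d_i(e_j))$ inside $G^{(j-1)}$ consistently, up to conjugation by stable letters, since the vertex groups sit inside $G_{\mathrm T}\le G^{(j-1)}$. We also need to check that restriction from $G^{(j-1)}$ to a vertex group factors through $G_{\mathrm T}$, using that the restriction $\rmH^1(G^{(j-1)})\to\rmH^1(G_{\mathrm T})$ is surjective. Conjugation acts trivially on $\rmH^1$ with trivial coefficients, which handles the ambiguity. After $k$ steps, $G=G^{(k)}$ and $\calR_p(G)=0$.
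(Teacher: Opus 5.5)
Your proposal is correct and matches the paper's proof. You apply Proposition~\ref{prop:amalg} inductively along the maximal tree, carrying surjectivity of restriction to the vertex groups through the induction. You then apply Proposition~\ref{prop:HNN} once per non-tree edge, transferring hypothesis~(3) from $G_{\mathrm T}$ to each intermediate group: restriction to the vertex groups factors through $G_{\mathrm T}$, and $\rmH^1(G^{(j)},\FF_p)\to\rmH^1(G^{(j-1)},\FF_p)$ is surjective by the HNN Mayer--Vietoris sequence.

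One wording fix. The chain shows that every class on $G_{\mathrm T}$ extends to $G^{(j-1)}$, not that every class on $G^{(j-1)}$ comes from $G_{\mathrm T}$; the latter fails, since the duals of the stable letters restrict to zero on $G_{\mathrm T}$. The extension property is all you actually use.
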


\begin{proof}
Because $T$ is a tree, $G_T$ splits as an iterated amalgamated product
$$G_1 \amalg_{H_1} G_2 \amalg_{H_2} \cdots G_m \amalg_{H_{m-1}} G_m$$
of finitely many factors where each $\calR_p(G_j) = 0$ for $j=1,...,m$. At each amalgamation step the surjectivity of the restriction map is preserved, then applying Proposition \ref{prop:amalg} inductively implies $\calR_p(G_T)=0$.

Let $e_1,\dots,e_n$ be the edges of $\Gamma \smallsetminus E(\mathrm T)$. The group $G = \Pi_1(\calG,\Gamma)$ is obtained iteratively by setting $G_0 = G_T$ and $G_k = \HNN(G_{k-1}, \calG(e_k), \partial_0^k, \partial_1^k)$, where $\partial_i^k$ is the composition of the edge monomorphism with the inclusion into $G_T$. We proceed by induction on $k$ to prove that $\calR_p(G_k) = 0$ and $\res^1_{G_k,G_T}$ is surjective. The base case $k = 0$ holds trivially.

Assume the claim holds for every natural $< k$. Let $A = \calG(e_k)$. The commutative diagram in \ref{it:thm fundamental groups diagram commuatative} ensures that the maps induced by the inclusions $\partial_0^k$ and $\partial_1^k$ coincide on $\rmH^1(G_{k-1},\FF_p)$. Thus, their difference in the Mayer--Vietoris sequence for $G_k$ is trivial, forcing $\rmH^1(G_k,\FF_p) \to \rmH^1(G_{k-1},\FF_p)$ to be surjective by exactness. 

Consequently, $\res^1_{G_k,G_T}$ is surjective. Moreover, the surjectivity of the maps in \ref{it:thm fundamental groups diagram commuatative} and the inductive hypothesis ensure that the maps $(\partial_i^k)^{(1)}$ are surjective. Since $\calR_p(G_{k-1}) = 0$, by Proposition \ref{prop:HNN}, we conclude $\calR_p(G_k) = 0$.
\end{proof}
\begin{ex}
Consider the following graph $\Gamma$:
$$\begin{tikzpicture}[auto]
\tikzset{every loop/.style={}}
\node[circle,draw=black, fill=black, inner sep=0pt, minimum size=1.5mm] at (0,0) {};
\node[circle,draw=black, fill=black, inner sep=0pt, minimum size=1.5mm] at (2,0) {};
\draw node[below=0.2cm] {$v_1$} (0,0) -- node[above] {$e_1$} (2,0) node[below = 0.1cm] {$v_2$};
\path (2,0) edge[in=330, out=30, loop,distance=1.5cm] node[right] {$e_2$} (2,0) node[above] {};
\end{tikzpicture}$$
The associated vertex groups are $\calG(v_i)\cong \ZZ_p^2$. We denote by $a_i$ and $b_i$ generators of $\calG(v_i)$. We choose $\calG(e_i)=\ZZ_p$ and denote by $c_i$ a generator of $\calG(e_i)$.

Now the edge maps are defined by
\begin{align*}
    \partial_{0,e_1}(c_1) &= a_1, &\partial_{1,e_1}(c_1) &= a_2,\\
    \partial_{0,e_2}(c_2) &= a_2 &\partial_{1,e_2}(c_2) &= a_2 b_2^p
\end{align*}
By identifying $a_1$ and $a_2$ within the presentation of the fundamental group $G$, it is not hard to check that this finite graph of pro-$p$ groups is proper. One readily verifies \ref{it:thm fundamental groups Gv trivial radical} and \ref{it:thm fundamental groups restriction maps surjective}. Since the images of the loop edge under the respective embeddings differ by the Frattini subgroups, hence, the induced maps on the first cohomology groups are identical and therefore also \ref{it:thm fundamental groups diagram commuatative} holds. Thus, $R_p(G) = 0$ by Theorem~\ref{thm:fundamentalgroupgeneral}.
\end{ex}

\section{The \texorpdfstring{$\FF_p$}{}-cup radical of \texorpdfstring{$p$}{}-RAAGs and \texorpdfstring{$\Delta$}{}-RAAGs}
\label{sec:RAAGs and Delta-RAAGs}
In this section, we present two classes of pro-$p$ groups that generalize in two different manners that of right-angled Artin pro-$p$ groups. 
The class of \textit{generalized $p$-RAAGs} appeared in a joint work \cite{quadrelliQuadraticCoho} by the last named author together with I.~Snopce and M.~Vannacci; while the second class, that of \textit{$\Delta$-RAAGs}, has been recently introduced by O.~Hamza, C.~Maire, J.~Mina\v c and N.D.~T\^an in \cite{deltaRAAGs} to study the maximal pro-$2$ quotients of the absolute Galois groups of formally real pythagorean fields; indeed, their construction allows one to introduce torsion inside the RAAG. 

We show that all these groups satisfy the $p$-Kijima--Nishi property, regardless of their realizability as maximal pro-$p$ quotients of absolute Galois groups. 

\subsection{\texorpdfstring{$p$}{}-RAAGs}\label{ssec:pRAAGs}
Let $\Gamma=(V,E)$ be a finite combinatorial directed graph, i.e., $V$ is a finite set of vertices, and $E\subseteq V\times V$ is a set of edges such that, if $(v,w)\in E$, then $(w,v)\notin E$.
A $p$-\textit{graph} is a pair $(\Gamma,f)$, where $f$ is a map $f:E\to p\mathbb Z_p\times p\mathbb Z_p$, if $p$ is odd, and $f:E\to 4\mathbb Z_2\times 4\mathbb Z_2$ if $p=2$. The associated generalized $p$-RAAG is the pro-$p$ group with presentation 
\[G_{\Gamma,f}=\pres{v\in V}{ [v,w]=v^{\lambda_1}w^{\lambda_2}:\ e=(v,w)\in E,\ (\lambda_1,\lambda_2)=f(e)}.\]
For the trivial map $f:e\in E\mapsto (0,0)$, the $\mathbb F_p$-cohomology ring of the pro-$p$ RAAG $G_\Gamma=G_{\Gamma,0}$ is known to be the exterior Stanley-Reisner ring (cf. \cite{lorensen,bartholdi})
\[{\rm H}^\bullet(G_\Gamma,\mathbb F_p)=\frac{\Lambda_{\mathbb F_p}(v^\ast:\ v\in V)}{(v^\ast\wedge w^\ast:\ (v,w),(w,v)\notin E)}\]
where $(v^\ast)_{v\in V}$ denotes the dual basis of the basis $(v\Phi(G_\Gamma))_{v\in V}$ of $G_\Gamma/\Phi(G_\Gamma)$.
On the contrary, for a general labeling $f$, the whole $\mathbb F_p$-cohomology of $G_{\Gamma,f}$ is unknown in general, but one can easily describe the cup product 
$$\rmH^1(G_{\Gamma,f},\mathbb F_p)\times \rmH^1(G_{\Gamma,f},\mathbb F_p)\longrightarrow \rmH^2(G_{\Gamma,f},\mathbb F_p)$$ using Proposition~\ref{prop:cup product and shape of relations}.
One has $$\rmH^1(G_{\Gamma,f},\FF_p)=\Hom_{\rm cts}(G_{\Gamma,f},\FF_p)={\rm Span}(v^\ast\mid v\in V),$$ where $v^\ast(w)=\delta_{vw}$ for $v,w\in V$, and there is a natural map $\bfH(G_\Gamma,\FF_p)\to \bfH(G_{\Gamma,f},\FF_p)$ which is an isomorphism in degrees $0$, $1$, and $2$.

\begin{lem}
\label{lem:radical of RAAG}
Let $(\Gamma=(V,E),f)$ be a $p$-graph. The $\FF_p$-cup radical of $G_{\Gamma,f}$ is the linear span of the dual basis elements of isolated vertices, i.e., \[\mathcal R_p(G_{\Gamma,f})=\operatorname{Span}_{\mathbb F_p}\set{v^\ast\mid v\in V}{ (w,v),(v,w)\notin E\ \forall w\in V}.\]
\end{lem}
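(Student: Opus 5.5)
The plan is to compute the cup product pairing on $\rmH^1(G_{\Gamma,f},\FF_p)$ explicitly with Proposition~\ref{prop:cup product and shape of relations}, and then read off the radical. Assume first that $\Gamma$ is finite, so $G_{\Gamma,f}$ is finitely presented. The given presentation is minimal: each relator $r_e=[v,w]^{-1}v^{\lambda_1}w^{\lambda_2}$ with $\lambda_i\in p\ZZ_p$ (resp.\ $4\ZZ_2$) lies in $\Phi(F)$. Hence $\{v^\ast\}_{v\in V}$ is a basis of $\rmH^1$.

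The first step is to compute $r_eF_3$ in the basis $\mathcal B$ of $\Phi(F)/F_3$.
\begin{itemize}
\item For $p$ odd, $v^{\lambda_1},w^{\lambda_2}\in F^p\subseteq F_3$, so $r_eF_3=[v,w]^{-1}F_3$.
\item For $p=2$, $\lambda_i\in4\ZZ_2$ gives $v^{\lambda_1},w^{\lambda_2}\in F^4\subseteq F_3$, so again $r_eF_3=[v,w]^{\pm1}F_3$.
\end{itemize}
In both cases the only element of $\mathcal B$ occurring in $r_e$ is $[v,w]F_3$ (with the chosen ordering of $v,w$). No squares occur, and distinct edges give distinct basis commutators. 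Proposition~\ref{prop:cup product and shape of relations} then gives $v^\ast\smallsmile w^\ast\neq0$ for $v\ne w$ if and only if $v$ and $w$ are adjacent. The same computation should show that the family $\{v^\ast\smallsmile w^\ast\}$ over edges is linearly independent. It is cleaner to use the identification $\rmH^2(G,\FF_p)\cong(R/R^p[R,F])^\ast$ and the dual description from \cite{Vogel2004}: $v^\ast\smallsmile w^\ast$ is, up to sign, the functional sending $r_e$ to $1$ if $e=\{v,w\}$ and to $0$ otherwise. Since the relators $r_e$ form a basis of $R/R^p[R,F]$ by minimality, these functionals are independent.

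Alternatively, one can use the stated natural map $\bfH(G_\Gamma,\FF_p)\to\bfH(G_{\Gamma,f},\FF_p)$, which is an isomorphism in degrees $\le2$ and commutes with cup products. This reduces the claim to the exterior Stanley--Reisner ring. There the products $v^\ast\wedge w^\ast$ for edges form a basis of degree $2$, and $v^\ast\wedge v^\ast=0$. For $p=2$ one also needs $(v^\ast)^2=0$, which holds because no $x^2F_3$ term appears in the relators.

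The radical is then computed as follows. Let $\alpha=\sum_v a_v v^\ast$. For each $u$, write $\alpha\smallsmile u^\ast=\sum_{v\sim u}\pm a_v\,(v^\ast\smallsmile u^\ast)$. By the independence just established, this vanishes for all $u$ exactly when $a_v=0$ for every $v$ adjacent to some $u$, that is, for every non-isolated $v$. Conversely, if $v$ is isolated, then $v^\ast\smallsmile u^\ast=0$ for all $u\ne v$, and $(v^\ast)^2=0$ (automatic for odd $p$, and for $p=2$ by the absence of squares). So $v^\ast$ lies in the radical, which proves the equality.

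The main obstacle is the linear independence of the products $v^\ast\smallsmile w^\ast$ over distinct edges, together with $(v^\ast)^2=0$ when $p=2$. Proposition~\ref{prop:cup product and shape of relations} as stated only gives non-vanishing of individual products. I will handle this via the degree-$\le2$ isomorphism with the RAAG cohomology, or via the explicit duality between cup products and relators modulo $F_3$.
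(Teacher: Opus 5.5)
Your proposal is correct and is essentially the paper's proof: reduce to $f=0$ via the degree-$\le 2$ isomorphism $\bfH(G_\Gamma,\FF_p)\to\bfH(G_{\Gamma,f},\FF_p)$, use that the products $v^\ast\smallsmile w^\ast$ over edges form a basis of $\rmH^2(G_\Gamma,\FF_p)$ in the exterior Stanley--Reisner ring, and conclude that a class with a nonzero coefficient at a non-isolated vertex cups nontrivially with the dual of a neighbour. (In your alternative Vogel-style route, minimality of the generating set does not by itself make the $r_e$ a basis of $R/R^p[R,F]$. You don't need that, though: evaluating a vanishing linear combination of the functionals on each $r_e$ already gives their independence.)
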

\begin{proof} 
    Since the cup product in degree one does not depend on the labeling $f$, it is enough to prove the claim for $f=0$. 
    If $v\in V$ is an isolated vertex, then $v^\ast\smallsmile \rmH^1(G_{\Gamma,f},\F_p)=0$ in $\bfH(G_\Gamma,\mathbb F_p)$. 
    
    Now, for $V=\{v_1,\ldots,v_d\}$, put 
    $$\alpha=a_1 v_1^\ast+\ldots+a_dv_d^*\in {\rm H}^1(G_\Gamma,\mathbb F_p),$$ and suppose that $a_i\neq 0$ for a non-isolated vertex $v_i\in V$. 
    Let $\{v_i,v_j\}\in E$ be an edge containing $v_i$, and consider the cup product 
    $$\alpha\smallsmile v_j^\ast=a_i (v_i^\ast\smallsmile v_j^\ast)+\sum_{h\neq i}a_h (v_h^\ast\smallsmile v_j^\ast).$$
    Since the set $\set{v_h^\ast\smallsmile v_{h'}^\ast}{h<h',\{v_h,v_{h'}\}\in E}$ is a basis of ${\rm H}^2(G_\Gamma,\mathbb F_p)$, we have proved that $\alpha\smallsmile v_j^\ast\neq 0$, that is $\alpha\notin \mathcal R_p(G_\Gamma)$.
\end{proof}

\begin{cor}
    \label{cor:RAAGs have KN}
    Let $(\Gamma=(V,E),f)$ be a $p$-graph. Then, the generalized pro-$p$ RAAG $G_{\Gamma,f}$ has the $p$-Kijima--Nishi property.
\end{cor}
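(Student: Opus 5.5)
The plan is to reduce to the free-product structure of $G_{\Gamma,f}$ along the isolated vertices and then use Lemma~\ref{lem:radical of RAAG} together with Proposition~\ref{prop:KNFreeProduct}. Let $V_0\subseteq V$ be the set of isolated vertices, and let $\Gamma_1$ be the full subgraph on $V\smallsetminus V_0$ with the restricted labeling $f$. No relation of $G_{\Gamma,f}$ involves a vertex in $V_0$. Directly from the presentation, therefore,
\[
G_{\Gamma,f}\cong G_{\Gamma_1,f}\ast_p F(V_0),
\]
where $F(V_0)$ is the free pro-$p$ group on $V_0$. If $V\smallsetminus V_0$ is empty, the first factor is trivial and $G_{\Gamma,f}$ is free.

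By Proposition~\ref{prop:KNFreeProduct}, it suffices to show that each factor has the $p$-Kijima--Nishi property. The free factor $F(V_0)$ has it by Example~\ref{ex:KNFree}. For the factor $G_{\Gamma_1,f}$, the graph $\Gamma_1$ has no isolated vertices by construction. Lemma~\ref{lem:radical of RAAG} then gives $\calR_p(G_{\Gamma_1,f})=0$, and Corollary~\ref{cor:trivial R} gives the $p$-Kijima--Nishi property for $G_{\Gamma_1,f}$. Combining the two factors via Proposition~\ref{prop:KNFreeProduct} proves the claim for $G_{\Gamma,f}$.

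The one step needing care is the free-product decomposition. I would argue it by universal properties. A morphism out of $G_{\Gamma_1,f}\ast_p F(V_0)$ is the same as a morphism out of $G_{\Gamma_1,f}$ together with an arbitrary assignment on $V_0$. This is exactly the data of a morphism out of the pro-$p$ group given by the presentation of $G_{\Gamma,f}$, since the relations only involve vertices of $\Gamma_1$.

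I do not expect a serious obstacle. Lemma~\ref{lem:radical of RAAG} applies to $\Gamma_1$ because it is a $p$-graph in its own right, with the same type of labeling. No knowledge of the higher cohomology of $G_{\Gamma,f}$ is needed, since only the degree-one cup product enters Lemma~\ref{lem:radical of RAAG}.
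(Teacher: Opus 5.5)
Your proof is correct and is essentially the paper's argument: split off the isolated vertices as a free factor, $G_{\Gamma,f}\cong F\amalg G_{\Gamma',f'}$, and conclude with Proposition~\ref{prop:KNFreeProduct}. You also spell out the steps the paper leaves implicit. These are that $\Gamma'$ has no isolated vertices, so Lemma~\ref{lem:radical of RAAG} and Corollary~\ref{cor:trivial R} handle the non-free factor, while Example~\ref{ex:KNFree} handles the free one.
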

\begin{proof}
    Let $I$ be the set of isolated vertices of $\Gamma$, and let $\Gamma'$ be the induced subgraph of $\Gamma$ spanned by $V\smallsetminus I$. If $F$ is the free pro-$p$ group on $I$, then $G_{\Gamma,f}$ splits as a free pro-$p$ product $F\amalg G_{\Gamma',f'}$
    (here $f'$ denotes the restriction of $f$ to $\Gamma'$), concluding the proof in the light of Proposition \ref{prop:KNFreeProduct}.
\end{proof}

\subsection{\texorpdfstring{$\Delta$}{}-RAAGs}\label{ssec:deltaRAAGs}
 Let $\Gamma=(V,E)$ be a finite graph, and let $G_\Gamma$ be its associated pro-$p$ RAAG. Denote by $\Delta$ the group of order two, and let $x_0$ be its generator. Let ${\bf z}:V\to G_\Gamma$ be a set-theoretic function such that the induced map $a_{\bf z}:G_\Gamma\to G_\Gamma$ defined by $v\mapsto {\bf z}_v v^{-1}{\bf z}_v^{-1}$ is an automorphism of $G_\Gamma$ of order two. 

These data define a $\Delta$-action $\delta_{\bf z}:\Delta\to \operatorname{Aut}(G_\Gamma)$ defined by $\delta_{\bf z}(x_0)=a_{\bf z}$, and hence a split extension $G_{\Gamma,\bf z}\coloneq G_\Gamma\rtimes_{\delta_{\bf z}}\Delta$, called the $\Delta$-RAAG associated with $\Gamma$ and $\bf z$ (see \cite{deltaRAAGs}).

\begin{prop}
\label{prop:D-RAAGs have KN}
    If $\Gamma$ is a non-empty finite graph, and $G_{\Gamma,\bf z}$ is a $\Delta$-RAAG defined on $\Gamma$, then $\calR_2(G_{\Gamma,\bf z})=0$, and hence $G_{\Gamma,\bf z}$ satisfies the $2$-Kijima--Nishi property.
\end{prop}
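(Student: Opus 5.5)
The plan is to reduce the statement to Proposition~\ref{prop:almost direct product}. By construction, $G_{\Gamma,\bf z}=G_\Gamma\rtimes_{\delta_{\bf z}}\Delta$ is a semidirect product of the pro-$2$ groups $N=G_\Gamma$ and $Q=\Delta\cong C_2$. So it is enough to check three things: that this product is $2$-almost direct, and that both $\rmH^1(\Delta,\FF_2)$ and $\rmH^1(G_\Gamma,\FF_2)$ are non-trivial. Once $\calR_2(G_{\Gamma,\bf z})=0$ is established, the $2$-Kijima--Nishi property follows at once from Corollary~\ref{cor:trivial R}.

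The non-triviality conditions are immediate. First, $\rmH^1(\Delta,\FF_2)=\Hom(C_2,\ZZ/2)\cong\FF_2$. Second, $\rmH^1(G_\Gamma,\FF_2)$ has the dual basis $\set{v^\ast}{v\in V}$, which is non-empty because $\Gamma$ is assumed non-empty.

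The main point is the $2$-almost directness. Since $G_\Gamma$ is pro-$2$, it suffices to show that $\Delta$ acts trivially on $G_\Gamma/\Phi(G_\Gamma)$. It is enough to check this on the generators $v\in V$, because the action is by a continuous automorphism and $G_\Gamma/\Phi(G_\Gamma)$ is spanned by their images. We have $a_{\bf z}(v)={\bf z}_v v^{-1}{\bf z}_v^{-1}$. Modulo $\Phi(G_\Gamma)=G_\Gamma^2[G_\Gamma,G_\Gamma]$, conjugation becomes trivial, so this is congruent to $v^{-1}$. Moreover $v^{-1}=v\cdot v^{-2}\equiv v$ because $v^{-2}\in G_\Gamma^2$. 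Hence $a_{\bf z}$ induces the identity on $G_\Gamma/\Phi(G_\Gamma)$. Dually, $x_0$ acts trivially on $\rmH^1(G_\Gamma,\FF_2)$, which is exactly the $2$-almost direct hypothesis.

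I expect no serious obstacle. The only place needing care is this congruence computation, where one uses $p=2$: the inversion $v\mapsto v^{-1}$ is trivial modulo squares only in characteristic $2$. Proposition~\ref{prop:almost direct product} then gives $\calR_2(G_{\Gamma,\bf z})=0$, and Corollary~\ref{cor:trivial R} concludes the proof.
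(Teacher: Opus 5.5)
Your proposal is correct and follows essentially the same route as the paper: you show that $\Delta$ acts trivially on $\rmH^1(G_\Gamma,\FF_2)$, so $G_{\Gamma,\bf z}$ is a $2$-almost direct product, and then apply Proposition~\ref{prop:almost direct product}. You are slightly more careful than the paper's one-line proof, since you explicitly handle the inversion $v\mapsto v^{-1}$ (trivial modulo squares because $p=2$) and verify the non-triviality hypotheses that proposition requires, using that $\Gamma$ is non-empty.
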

\begin{proof}
{%\color{julian} 
Since $\Delta$ acts by conjugation with the elements $\mathbf{z}_v$ on the generators $v$ of $G_\Gamma$ we see that the induced action on $\rmH^1(G_\Gamma,\FF_2)$ is trivial and hence $G_{\Gamma,\mathbf{z}}$ splits as a $2$-almost direct product and, by Proposition~\ref{prop:almost direct product}, its $\FF_2$-cup radical is trivial.}
\end{proof}
As a consequence, we deduce case \ref{it:Main Theorem Fields Pythagorean} from Theorem~\ref{mainthm:List of fields}: if $\KK$ is a formally real Pythagorean field of finite type --- i.e., if $-1$ is not a square, the sum of two squares is a square, and $\KK^\times/\KK^{\times 2}\simeq \rmH^1(\KK,\F_2)$ is a finite group ---, then its maximal pro-$2$ Galois group is a $\Delta$-RAAG (cf. \cite[Thm. 2.1]{deltaRAAGs}), and hence $\KK$ satisfies the $2$-Kijima--Nishi property.

\section{A generalization in the absence of roots of unity}
\label{sec:No roots of unity}
If a field $\KK$ does not contain a primitive root of unity, then the definition of the radical already poses problems. It is not hard to see that, for example, $\calR_p(G_{\QQ_\ell})=\FF_p$ when $p\nmid \ell-1$. In this section we adapt our definition of the $\FF_p$-cup radical to this situation and summarize generalizations of some of the results given in Section~\ref{sec:Arithmetic and geometric fields} to fields that do not contain a primitive $\pth$-root of unity. 

We first generalize our definition for the radical for \emph{$p$-oriented profinite groups}. These are pairs $(G,\theta)$, where $G$ is a profinite group and $\theta:G\to \ZZ_p^\times$ a continuous homomorphism. For $k\in \ZZ$, we define $\FF_p(k)$ to be the discrete $G$-module with $\sigma(x)\coloneq \theta(\sigma)^k\cdot x$. More generally, if $M$ is a discrete $p$-torsion $G$-module, one defines the $k^{\rm th}$-Tate twist of $M$ to be $M(k)\coloneq M\otimes_{\FF_p} \FF_p(k)$.

We now define
\begin{align*}
    \calR_p(G,\theta)\coloneq \set{\alpha\in \rmH^1(G,\FF_p)}{\alpha \smallsmile\beta=0\text{ for all }\beta \in \rmH^1(G,\FF_p(1))}.
\end{align*}
If $G=G_{\KK}$ for a field $\KK$ of characteristic $\neq p$, then there is a natural $p$-orientation $\theta_{\KK}$ given by the action of $G$ on $\mu_{p^\infty}(\KK^{\rm sep})\cong \QQ_p/\ZZ_p$, as $\Aut(\QQ_p/\ZZ_p)\cong \ZZ_p^\times$. Then the cup product between elements of $\rmH^1(G,\F_p)$ and $\rmH^1(G,\mu_p)$ can be identified with the map
\begin{align*}
    \rmH^1(G,\FF_p)\otimes \rmH^1(G,\mu_p)\to {}_p\Br(\KK),\quad \chi\otimes \kappa(b)\mapsto [(\chi,b)],
\end{align*}
where $(\chi,b)$ is the cyclic algebra associated with $b\in\KK^\times$ and $\chi$ (see \cite[Const.~2.5.1 and Prop.~4.7.1]{GilleSzamuely}). Thus, this generalizes Remark~\ref{rem:Brauer group and cup product}.

The next lemma is clear from the definitions and shows that $\calR_p(G,\theta)$ essentially only depends on the reduction of $\theta$ modulo $p$.
\begin{lem}
    Let $\theta$ and $\theta'$ be two $p$-orientations on a profinite group $G$. If $\theta(\sigma)\equiv \theta'(\sigma)\pmod p$ for every $\sigma \in G$, then $\calR_p(G,\theta)=\calR_p(G,\theta')$.

    In particular, if $\im\theta\subseteq 1+p\ZZ_p$, then $\calR_p(G,\theta)=\calR_p(G)$.
\end{lem}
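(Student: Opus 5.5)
The argument reduces to comparing the two twisted modules $\FF_p(1)$ attached to $\theta$ and to $\theta'$. Reducing $\theta$ and $\theta'$ modulo $p$ gives two continuous characters $\bar\theta,\bar\theta'\colon G\to\FF_p^\times$. By definition, the $G$-action on $\FF_p(1)$ is $\sigma\cdot x=\theta(\sigma)x$, and since $x\in\FF_p$ only the residue of $\theta(\sigma)$ modulo $p$ matters. So if $\theta(\sigma)\equiv\theta'(\sigma)\pmod p$ for all $\sigma\in G$, the discrete $G$-modules $\FF_p(1)_\theta$ and $\FF_p(1)_{\theta'}$ are literally the same: the identity of $\FF_p$ is a $G$-equivariant isomorphism between them.

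Next I would use functoriality of cohomology and of the cup product in the coefficients. The identity map $\FF_p(1)_\theta\to\FF_p(1)_{\theta'}$ induces the identity on $\rmH^1(G,\FF_p(1))$, since the cochain complexes coincide. The cup product $\rmH^1(G,\FF_p)\times\rmH^1(G,\FF_p(1))\to\rmH^2(G,\FF_p\otimes\FF_p(1))$ is defined through the same pairing $\FF_p\otimes\FF_p(1)=\FF_p(1)$, which likewise does not see $\theta$ beyond its reduction. The conditions ``$\alpha\smallsmile\beta=0$ for all $\beta\in\rmH^1(G,\FF_p(1))$'' are therefore identical for $\theta$ and $\theta'$, and $\calR_p(G,\theta)=\calR_p(G,\theta')$.

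For the particular case, take $\theta'$ to be the trivial orientation $\sigma\mapsto 1$. If $\im\theta\subseteq1+p\ZZ_p$, then $\theta\equiv1\pmod p$, so the first part gives $\calR_p(G,\theta)=\calR_p(G,\mathbf 1)$. With $\theta'$ trivial, $\FF_p(1)=\FF_p$ is the trivial module, and the defining condition becomes exactly that of Definition~\ref{defn:Fp cup radical}; hence $\calR_p(G,\mathbf 1)=\calR_p(G)$.

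None of this is difficult. The only point needing care is that the cup product into $\rmH^2(G,\FF_p(1))$ is computed with the tensor identification $\FF_p\otimes\FF_p(1)\cong\FF_p(1)$. That identification is the same underlying map for both orientations, so the vanishing conditions coincide and not merely match up under some isomorphism.
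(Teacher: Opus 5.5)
Your proposal is correct and is essentially the paper's argument: the paper gives no written proof and calls the lemma ``clear from the definitions,'' which is exactly your observation that $\FF_p(1)$, and hence $\rmH^1(G,\FF_p(1))$ and the cup-product pairing, depend only on $\theta \bmod p$. The special case then follows, as you say, by comparing with the trivial orientation, for which $\FF_p(1)=\FF_p$.
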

We generalize Corollary~\ref{cor:KNsemidirect} to this more general context. We give a full proof of the statement, as it highlights the techniques quite well.
\begin{prop}[Cyclotomic semidirect products]
    Let $(G_0,\theta)$ be a $p$-oriented profinite group. Define $G\coloneq \ZZ_p\rtimes_{\theta} G_0$ and let $\theta':G\to \ZZ_p^\times$ be the composition of $\theta$ with the projection onto the second factor. 

    Then $\calR_p(G,\theta')=0$. In particular, for any $p$-henselian field $\KK$, whose residue field is of characteristic $\neq p$ and not $p$-closed one has $\calR_p(G_{\KK},\theta_{\KK})=0$.
\end{prop}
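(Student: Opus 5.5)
We plan to imitate the proof of Proposition~\ref{prop:almost direct product}, using the Hochschild--Serre spectral sequence for $N\coloneq\ZZ_p\lhd G$ with quotient $G_0$. Here $G_0$ acts on $N$ through $\theta$. Write $\rmH^1(N,\FF_p)=\Hom(N,\FF_p)$. Since $g$ acts on $N$ by multiplication by $\theta(g)$, the induced action on $\Hom(N,\FF_p)$ is multiplication by $\theta(g)^{-1}$, so $\rmH^1(N,\FF_p)\cong\FF_p(-1)$ as a $G_0$-module. Similarly $\rmH^1(N,\FF_p(1))\cong\FF_p(0)=\FF_p$, because $N$ acts trivially on $\FF_p(1)$, as $\theta'|_N=1$.

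Since $G$ is a semidirect product, the differentials $d_2^{*,1}$ vanish (\cite[Prop.~2.4.5]{NSW2008}), and $N$ has cohomological dimension $1$. This gives split exact sequences
\begin{align*}
0\to\rmH^1(G_0,\FF_p(k))\xrightarrow{\inf}\rmH^1(G,\FF_p(k))\xrightarrow{\res}\rmH^1(N,\FF_p(k))^{G_0}\to0
\end{align*}
for $k=0,1$, and $\rmH^1(N,\FF_p(k))^{G_0}=\rmH^0(G_0,\FF_p(k-1))$. For $k=1$ the last group is $\FF_p$. So there is a canonical class $\tau\in\rmH^1(G,\FF_p(1))$, trivial on $G_0$, whose restriction to $N$ generates $\rmH^1(N,\FF_p(1))$. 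Arithmetically, $\tau$ is the Kummer class of a uniformizer.

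Now let $\alpha\in\calR_p(G,\theta')$ and decompose it as $\alpha=\inf(\alpha_0)+\alpha_N$ along the splitting, with $\alpha_N\in\rmH^0(G_0,\FF_p(-1))$. We use the filtration on $\rmH^2(G,\FF_p(1))$ and the projection $\pi\colon F^1\rmH^2\to E_\infty^{1,1}=\rmH^1(G_0,\rmH^1(N,\FF_p(1)))=\rmH^1(G_0,\FF_p)$. By multiplicativity of the spectral sequence, $\alpha\smallsmile\tau$ lies in $F^1$ up to the product $\alpha_N\smallsmile\tau|_N\in E^{0,2}=0$, and we get
\[
\pi(\alpha\smallsmile\tau)=\pm\,\alpha_0\cdot\tau_N ,
\]
where $\tau_N$ is the generator of $E_2^{0,1}$. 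The sign is immaterial. Multiplication by $\tau_N$ identifies $E_2^{1,0}=\rmH^1(G_0,\FF_p)$ isomorphically with $E_2^{1,1}$. Since $E_\infty^{1,1}=E_2^{1,1}$ by the degeneration, $\alpha\smallsmile\tau=0$ forces $\alpha_0=0$.

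It remains to show $\alpha_N=0$. If $\theta\not\equiv1\pmod p$, then $\rmH^0(G_0,\FF_p(-1))=0$ and there is nothing to do. Otherwise $\theta'$ reduces to $1$ modulo $p$, so $\FF_p(1)=\FF_p$ and the earlier lemma lets us work with $\calR_p(G)$. In this case $\alpha=\alpha_N$ is the dual $\sigma$ of the $N$-generator. For $\beta=\inf(\beta_0)$ with $\beta_0\ne0$, the same computation gives $\pi(\sigma\smallsmile\beta)=\pm\beta_0\sigma\ne0$. This is exactly Proposition~\ref{prop:almost direct product}, which applies provided $\rmH^1(G_0,\FF_p)\ne0$.

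The main obstacle is the degenerate case where $\theta\equiv1$ and $\rmH^1(G_0,\FF_p)=0$. Then $G(p)=\ZZ_p$ and the radical is $\FF_p$, so the statement requires $G_0(p)\ne1$ or $\theta\not\equiv1$. We would add this as a standing hypothesis, analogous to the non-triviality assumption in Corollary~\ref{cor:KNsemidirect}. The other delicate point is checking carefully, with the twisted coefficients, that $E_2^{1,0}\otimes E_2^{0,1}\to E_2^{1,1}$ is an isomorphism. This holds because $E_2^{0,1}$ is a one-dimensional module on which $G_0$ acts trivially.

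For the arithmetic statement, let $\KK$ be $p$-henselian with residue field $\mathbb{k}$ of characteristic $\ne p$. Standard ramification theory gives $G_\KK(p)$, or more precisely the relevant quotient, as $\ZZ_p(1)\rtimes G_{\mathbb{k}}$, with $\theta_\KK$ induced from $\theta_{\mathbb{k}}$ and tame inertia acting through the cyclotomic character. The hypothesis that $\mathbb{k}$ is not $p$-closed ensures the non-degeneracy condition above, so the first part applies. Alternatively, one can pass to the maximal quotient through which $\FF_p$- and $\FF_p(1)$-cohomology factor, in the manner of Remark~\ref{rem: radical reduction to pro-p case}.
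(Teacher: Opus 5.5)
This is essentially the paper's proof. You use the Hochschild--Serre sequence for $\ZZ_p\lhd G$, settle the case $\theta\not\equiv1\pmod p$ by cupping with a class of $\rmH^1(G,\FF_p(1))$ with nonzero image in $E_2^{0,1}\cong\FF_p$ and projecting onto $E_\infty^{1,1}\cong\rmH^1(G_0,\FF_p)$, and refer the case $\theta\equiv1\pmod p$ to Proposition~\ref{prop:almost direct product}. You are right that this last reduction needs $\rmH^1(G_0,\FF_p)\neq0$, so the statement is false as written when $G_0(p)=1$ and $\theta\equiv1\pmod p$ (then $\theta=1$, $G(p)\cong\ZZ_p$ and $\calR_p(G,\theta')=\FF_p$), a hypothesis the paper silently omits.

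One slip needs repair. Knowing that $\alpha_N\smallsmile\tau$ maps to $E_2^{0,2}=0$ only puts it in $F^1$; it does not make its image in $E_\infty^{1,1}$ vanish. For instance, take $p=2$ and $G=\ZZ_2\rtimes C_2$ with $C_2$ acting by inversion: then $\sigma\smallsmile\sigma\neq0$ has nonzero image there, and $\alpha=\inf(\alpha_0)+\sigma$ with $\alpha_0\neq0$ satisfies $\alpha\smallsmile\sigma=0$. So when $\theta\equiv1$ you should first kill $\alpha_N$ by cupping with $\inf(\beta_0)$, which does not use $\alpha_0=0$, and only then use $\tau$. Alternatively, simply cite Proposition~\ref{prop:almost direct product} for that whole case, as the paper does.
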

\begin{proof}
    We can without loss of generality assume that $\theta\not\equiv 1\pmod p$, since otherwise the semidirect product is $p$-almost direct and hence the statement follows by Proposition~\ref{prop:almost direct product}.

    Let $k\in \ZZ$, then we have a spectral sequence 
    \begin{align*}
        E^{s,t}_2(k)\coloneq \rmH^s(G_0,\rmH^t(\ZZ_p,\FF_p(k)))\Longrightarrow \rmH^{s+t}(G,\FF_p(k))
    \end{align*}
    Note that $\rmH^0(\ZZ_p,\FF_p(k))\cong \FF_p(k)$ and $\rmH^1(\ZZ_p,\FF_p(k))\cong \FF_p(k-1)$ as $G_0$-modules. By \cite[Prop. 2.4.5]{NSW2008}, the spectral sequence degenerates, and we have for every $n\in \NN$ exact sequences
    \begin{align*}
        0\to \rmH^n(G_0,\FF_p(k))\to \rmH^n(G,\FF_p(k))\overset{\partial^n(k)}\to \rmH^{n-1}(G_0,\FF_p(k-1))\to 0.
    \end{align*}
    From this, we immediately see that $\rmH^1(G_0,\FF_p)\cong \rmH^1(G,\FF_p)$. Note that for $k,k'\in \ZZ$ there is a canonical product $E^{*,*}_2(k)\otimes E_2^{*,*}(k')\to E^{*,*
    }_2(k+k')$ coming from the isomorphism $\FF_p(k+k')\cong \FF_p(k)\otimes \FF_p(k')$.

    We note that the following diagram commutes:
    \begin{equation*}
        \begin{tikzcd}[column sep=small]
            \rmH^1(G,\FF_p)\arrow[r,phantom,"\times"]&\rmH^1(G,\FF_p(1))\arrow[d,two heads, "\partial^1(1)"] \arrow[r,"\smallsmile"] &\rmH^2(G,\FF_p(1))\arrow[d,two heads,"\partial^2(1)"]\\
            \rmH^1(G_0,\FF_p)\arrow[u,equal]\arrow[r,phantom,"\times"]&\rmH^0(G_0,\FF_p)\arrow[r,"\smallsmile"]&\rmH^1(G_0,\FF_p)
        \end{tikzcd}
    \end{equation*}
    By the identification $\rmH^0(G_0,\FF_p)\cong \FF_p$ the cup product in the lower row is just multiplication by scalars.
    
    Given $\alpha\in \calR_p(G,\theta')$, we let $\alpha_0\in \rmH^1(G_0,\FF_p)$ be the unique element such that $\smash{\inf^1}(\alpha_0)=\alpha$. Now choose $\beta\in \rmH^1(G,\FF_p(1))$ such that $\lambda\coloneq \partial^1(1)(\beta)\neq 0$. Then 
    \begin{align*}
        0=\partial^2(1)(\alpha \smallsmile\beta)=\alpha_0\smallsmile \partial^1(1)(\beta)=\lambda \alpha_0\in \rmH^1(G_0,\FF_p)
    \end{align*}
    and thus $\alpha_0=0$. This shows the first statement. The second one follows from the first one by writing $G_\KK$ as semidirect product of the absolute Galois group of the residue field with a procyclic group (see \cite[Exercises II \S4]{serre:galc}), where the action is given by a variant of $\theta_\KK$. The same arguments as above apply in this case.
\end{proof}
We recover several of the statements of Section~\ref{sec:Arithmetic and geometric fields} in this more general setting, leading us to believe that this generalized definition of the radical is indeed the right notion in the case where $\KK$ does not contain a primitive $\pth$-root of unity.
\begin{thm}
\label{thm:general fields}
    Let $\KK$ be either of the following fields and $p$ a prime different from the characteristic of $\KK$, then $\calR_p(\KK,\theta_{\KK})=0$:
    \begin{enumerate}[label=(\roman*)]
        \item \label{it:general local field} local field;
        \item \label{it:general global field} global field;
        \item \label{it:general function field} a purely transcendental extension of a field $\mathbb{k}$, for which $G_{\mathbb{k}}(p)$ is infinite.
    \end{enumerate}
\end{thm}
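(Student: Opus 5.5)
The plan is to follow the structure of Section~\ref{sec:Arithmetic and geometric fields}, replacing every use of the ordinary cup product pairing by the twisted pairing $\rmH^1(G,\FF_p)\times\rmH^1(G,\FF_p(1))\to\rmH^2(G,\FF_p(1))$. By Kummer theory, $\rmH^1(G,\FF_p(1))=\rmH^1(G,\mu_p)\cong\KK^\times/\KK^{\times p}$, and the pairing is $\chi\otimes\kappa(b)\mapsto[(\chi,b)]\in{}_p\Br(\KK)$. Throughout, one uses that restriction along an inclusion $G_{\KK_v}\hookrightarrow G_\KK$ is compatible with the twist, since $\theta_{\KK}$ restricts to $\theta_{\KK_v}$, and that it commutes with cup products.

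\textbf{Local fields.} For $\KK$ non-archimedean of residue characteristic $\neq p$, the preceding Proposition (cyclotomic semidirect products) applies directly: $\KK$ is henselian, and its finite residue field has $G_{\mathbb{k}}(p)\cong\ZZ_p$, so it is not $p$-closed. If the residue characteristic equals $p$, I would instead use local Tate duality. The pairing $\rmH^1(\KK,\FF_p)\times\rmH^1(\KK,\mu_p)\to\rmH^2(\KK,\mu_p)\cong\FF_p$ is perfect (cf.\ \cite{NSW2008}), so the left kernel vanishes. For archimedean $\KK$ with $p$ odd, $\rmH^1(\KK,\FF_p)=0$. For $\KK=\RR$ and $p=2$, the root of unity $-1$ lies in $\KK$, and $\calR_2(G_\RR)=0$ by Example~\ref{ex:cyclic groups}. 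In every case $\calR_p(\KK_v,\theta)=0$.

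\textbf{Global fields.} Let $\alpha\in\calR_p(G_\KK,\theta_\KK)$. For each place $v$, I first need the twisted analogue of Lemma~\ref{lem:Krasner cohomology}: the map $\rmH^1(\KK,\mu_p)\to\rmH^1(\KK_v,\mu_p)$ is surjective. This follows because $\KK^\times$ is dense in $\KK_v^\times$ and $\KK_v^{\times p}$ is open. Hence $\alpha_v$ pairs trivially with all of $\rmH^1(\KK_v,\FF_p(1))$, and therefore $\alpha_v=0$ by the local case. So $\alpha$ is a cyclic degree-$p$ character that is locally trivial everywhere. By Chebotarev, the corresponding cyclic extension splits at every place, hence is trivial, so $\alpha=0$. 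Here Chebotarev replaces Grunwald--Wang, which was only needed for Kummer classes.

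\textbf{Purely transcendental extensions.} I would reduce to $\KK=\mathbb{k}(t)$ and induct, handling infinite transcendence degree by a direct limit. This is valid because a class lying in the radical over a union restricts to a radical class at some finite stage, once enough $b$'s are included. Given $\alpha\in\calR_p(\KK,\theta_\KK)$, at each place $v$ trivial on $\mathbb{k}$ the completion $\KK_v$ is henselian with residue field $\mathbb{k}_v$ finite over $\mathbb{k}$. Then $G_{\mathbb{k}_v}(p)$ is open in $G_{\mathbb{k}}(p)$ and so infinite, in particular not $p$-closed. Using the surjectivity above together with the previous Proposition, I get $\alpha_v=0$ for all $v$. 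The field $\LL$ cut out by $\alpha$ is therefore unramified everywhere and split at all places, including those with residue field $\mathbb{k}$. As in the proof of Theorem~\ref{thm: rational function field trivial radical}, $\LL=\mathbb{f}\KK$ for a constant extension $\mathbb{f}/\mathbb{k}$, and a nontrivial constant extension does not split at a degree-one place. So $\alpha=0$.

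The main obstacle is this last step. Without roots of unity, $\mathbb{f}/\mathbb{k}$ is an $\FF_p$-cyclic extension rather than a Kummer one, and I must check that the étale-cover argument and the identification $\LL_v=\mathbb{f}\KK_v$ still hold. I expect they do, since they only use $\pi_1(\mathbb{P}^1)$ and the fact that the extension is unramified. A secondary subtlety is $p$-adic local fields in the global case; there I would rely on Tate duality as above rather than the semidirect description.
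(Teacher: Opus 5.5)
Your proposal is correct and is essentially the paper's proof, which only cites local Tate duality for (i) and says that (ii) and (iii) follow verbatim from Theorems~\ref{thm:Gloabal field trivial radical} and~\ref{thm: rational function field trivial radical}; you correctly make explicit the two changes that ``verbatim'' hides, namely that the needed surjectivity is now on $\rmH^1(-,\mu_p)$ (by density of $\KK^\times$ in $\KK_v^\times$), and that the final local--global step must use Chebotarev for $\ZZ/p$-characters rather than Grunwald--Wang, since $\alpha$ is no longer a Kummer class. Two small points: $G_{\mathbb{k}_v}(p)$ is not a subgroup of $G_{\mathbb{k}}(p)$, but it surjects onto the open, hence infinite, image of $G_{\mathbb{k}_v}$ in $G_{\mathbb{k}}(p)$, which is all you need; and the principle you invoke for the direct limit in (iii) is not automatic for general direct limits, since there are infinitely many $b$'s and no uniform finite stage. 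It holds here only because $\rmH^2$ injects along purely transcendental extensions. Alternatively, you can bypass it by writing $\KK=\KK'(t)$ with $\KK'$ purely transcendental over $\mathbb{k}$: then $G_{\KK'}\to G_{\mathbb{k}}$ is onto, $G_{\KK'}(p)$ is infinite, and the one-variable argument over $\KK'$ applies directly.
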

\begin{proof}
    The case \ref{it:general local field} follows immediately from local Tate duality and the proofs of \ref{it:general global field} and \ref{it:general function field} follow verbatim the ones of Theorem~\ref{thm:Gloabal field trivial radical} and Theorem~\ref{thm: rational function field trivial radical}.
\end{proof}